%% LyX 2.4.2.1 created this file.  For more info, see https://www.lyx.org/.
%% Do not edit unless you really know what you are doing.
\documentclass[oneside,english]{amsart}
\usepackage[T1]{fontenc}
\usepackage[latin9]{inputenc}
\setlength{\parindent}{0bp}
\usepackage{babel}
\usepackage{amstext}
\usepackage{amsthm}
\usepackage{amssymb}
\usepackage[pdfusetitle,
 bookmarks=true,bookmarksnumbered=false,bookmarksopen=false,
 breaklinks=false,pdfborder={0 0 0},pdfborderstyle={},backref=false,colorlinks=false]
 {hyperref}

\makeatletter
%%%%%%%%%%%%%%%%%%%%%%%%%%%%%% Textclass specific LaTeX commands.
\numberwithin{equation}{section}
\numberwithin{figure}{section}
\theoremstyle{plain}
\newtheorem{thm}{\protect\theoremname}[section]
\theoremstyle{plain}
\newtheorem{prop}[thm]{\protect\propositionname}
\theoremstyle{remark}
\newtheorem{rem}[thm]{\protect\remarkname}
\theoremstyle{plain}
\newtheorem{lem}[thm]{\protect\lemmaname}
\theoremstyle{definition}
\newtheorem{defn}[thm]{\protect\definitionname}
\theoremstyle{remark}
\newtheorem{claim}[thm]{\protect\claimname}

\makeatother

\providecommand{\claimname}{Claim}
\providecommand{\definitionname}{Definition}
\providecommand{\lemmaname}{Lemma}
\providecommand{\propositionname}{Proposition}
\providecommand{\remarkname}{Remark}
\providecommand{\theoremname}{Theorem}

\begin{document}
\global\long\def\Q{\mathbf{\mathbb{Q}}}%
\global\long\def\R{\mathbf{\mathbb{R}}}%
\global\long\def\C{\mathbf{\mathbb{C}}}%
\global\long\def\Z{\mathbf{\mathbb{Z}}}%
\global\long\def\N{\mathbf{\mathbb{N}}}%
\global\long\def\T{\mathbb{T}}%
\global\long\def\Im{\mathrm{Im}}%
\global\long\def\Re{\mathrm{Re}}%
\global\long\def\H{\mathcal{H}}%
\global\long\def\M{\mathbb{M}}%
\global\long\def\P{\mathbb{P}}%
\global\long\def\L{\mathcal{L}}%
\global\long\def\F{\mathcal{\mathcal{F}}}%
\global\long\def\s{\sigma}%
\global\long\def\Rc{\mathcal{R}}%
\global\long\def\W{\tilde{W}}%

\global\long\def\G{\mathcal{G}}%
\global\long\def\d{\partial}%
 
\global\long\def\jp#1{\langle#1\rangle}%
\global\long\def\norm#1{\|#1\|}%
\global\long\def\mc#1{\mathcal{\mathcal{#1}}}%

\global\long\def\Right{\Rightarrow}%
\global\long\def\Left{\Leftarrow}%
\global\long\def\les{\lesssim}%
\global\long\def\hook{\hookrightarrow}%
\global\long\def\weak{\rightharpoonup}%
\global\long\def\supp{\mathrm{supp}}%
\global\long\def\sub{\mathrm{sub}}%
\global\long\def\loc{\mathrm{loc}}%

\global\long\def\D{\mathbf{D}}%
\global\long\def\rad{\mathrm{rad}}%

\global\long\def\env{\mathrm{Env}}%
\global\long\def\re{\mathrm{re}}%
\global\long\def\im{\mathrm{im}}%
\global\long\def\err{\mathrm{err}}%

\global\long\def\d{\partial}%
 
\global\long\def\jp#1{\langle#1\rangle}%
\global\long\def\norm#1{\|#1\|}%
\global\long\def\ol#1{\overline{#1}}%
\global\long\def\wt#1{\widehat{#1}}%
\global\long\def\tilde#1{\widetilde{#1}}%

\global\long\def\br#1{(#1)}%
\global\long\def\Bb#1{\Big(#1\Big)}%
\global\long\def\bb#1{\big(#1\big)}%
\global\long\def\lr#1{\left(#1\right)}%

\global\long\def\ve{\varepsilon}%
\global\long\def\la{\lambda}%
\global\long\def\al{\alpha}%
\global\long\def\be{\beta}%
\global\long\def\ga{\gamma}%
\global\long\def\La{\Lambda}%
\global\long\def\De{\Delta}%
\global\long\def\na{\nabla}%

\global\long\def\ep{\epsilon}%
\global\long\def\fl{\flat}%
\global\long\def\sh{\sharp}%
\global\long\def\calN{\mathcal{N}}%
\global\long\def\avg{\mathrm{avg}}%
\global\long\def\rc{\mathrm{rc}}%
\subjclass[2020]{35Q55}
\keywords{nonlinear Schr\"odinger equations, global well-posedness, energy-critical,
periodic domains.}
\title[Global well-posedness of NLS on $\T^{d}$]{global well-posedness of the energy-critical nonlinear Schr\"odinger
equations on $\T^{d}$}
\author{Beomjong Kwak}
\email{beomjong@kaist.ac.kr}
\address{Department of Mathematical Sciences, Korea Advanced Institute of Science
and Technology, 291 Daehak-ro, Yuseong-gu, Daejeon 34141, Korea}
\begin{abstract}
In this paper, we prove the global well-posedness of the energy-critical
nonlinear Schr\"odinger equations on the torus $\T^{d}$ for general
dimensions. This result is new for dimensions $d\ge5$, extending
previous results for $d=3,4$ \cite{ionescu2012energy,YUE2021754}.
Compared to the cases $d=3,4$, the regularity theory for higher $d$,
developed in the underlying local well-posedness result \cite{LWP(working)},
is less understood. In particular, stability theory and inverse inequalities,
which are ingredients in \cite{ionescu2012energy,YUE2021754} and
more generally in the widely used concentration compactness framework
since \cite{kenig2006global}, are too weak to be applied to higher
dimensions.

Our proof introduces a new strategy for addressing global well-posedness
problems. Without relying on perturbation theory, we develop tools
to analyze the concentration dynamics of the nonlinear flow. On the
way, we show the formation of a nontrivial concentration.
\end{abstract}

\maketitle

\section{Introduction}

The subject of this paper is the global well-posedness of the Cauchy
problem for the energy-critical nonlinear Schr\"odinger equation (NLS)
on the periodic space

\begin{equation}
\begin{cases}
iu_{t}+\De u=\pm|u|^{\frac{4}{d-2}}u=:\mathcal{N}(u)\\
u(0)=u_{0}\in H^{1}(\T^{d})
\end{cases},\tag{{\text{\text{NLS}\ensuremath{(\mathbb{T}^{d}})}}}\label{eq:NLS T^d}
\end{equation}
where $u:\R\times\T^{d}\rightarrow\C$, $d\ge3$ and $\T^{d}=\R^{d}/(\theta_{1}\Z\times\cdots\times\theta_{d}\Z),\theta_{j}>0$
is any rectangular torus.

For $d\ge3$, the following local well-posedness result for (\ref{eq:NLS T^d})
was shown in \cite{herr2011global,killip2016scale,LWP(working)}:\footnote{Indeed, the critical local well-posedness was shown over a larger
regime of mass-supercritical power in \cite{lee2019local,LWP(working)},
though we focus on the energy-critical case.}
\begin{prop}[\cite{herr2011global,killip2016scale,LWP(working)}]
\label{prop:LWP}For $d\ge3$, (\ref{eq:NLS T^d}) is locally well-posed
in $C^{0}H^{1}\cap Y^{1}$. Here, $Y^{1}$ is the atomic-based space
introduced in \cite{herr2011global}; see Section \ref{subsec:Schr=0000F6dinger-operators,-Strichart}.

Nevertheless, the flow map fails to be Lipschitz for $d\ge7$.
\end{prop}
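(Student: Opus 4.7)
My plan is to reduce \eqref{eq:NLS T^d} to a Duhamel fixed-point equation in a suitable Banach function space, and to close it by a frequency-localized multilinear Strichartz estimate. I would first recall the Herr--Tataru--Tzvetkov setup: starting from the atomic and variation spaces $U^p_\Delta, V^p_\Delta$ of Koch--Tataru adapted to the Schr\"odinger group, one builds $Y^1 \hookrightarrow C^0 H^1$ in which the free evolution $e^{it\Delta}\phi$ sits with norm $\lesssim \|\phi\|_{H^1}$, together with a dual forcing space $N^1$ such that the Duhamel map $F \mapsto \int_0^t e^{i(t-s)\Delta} F(s)\,ds$ is bounded from $N^1(I)$ to $Y^1(I)$.

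The heart of the argument is the nonlinear estimate
\begin{equation*}
\|\mathcal{N}(u)\|_{N^1(I)} \lesssim \|u\|_{Y^1(I)} \, \|u\|_{Z(I)}^{4/(d-2)},
\end{equation*}
together with its difference version, where $Z(I)$ is a Strichartz-type auxiliary norm, weaker than $Y^1$, that tends to $0$ as $|I| \to 0$ for any fixed $u_0 \in H^1$. Such an estimate rests on $L^p$-Strichartz inequalities on $\T^d$: the cases $d=3,4$ were handled in \cite{herr2011global,killip2016scale} using Bourgain's exponential sum bounds, and for $d \ge 5$ one invokes the $\ell^2$-decoupling theorem of Bourgain--Demeter at the scaling-sharp exponent. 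The additional difficulty for $d\ge 7$, where $4/(d-2) < 1$, is that the usual chain-rule/paraproduct manipulations fail because $\mathcal{N}$ is only H\"older of order $4/(d-2)$ at the origin; this forces the finer decomposition carried out in \cite{LWP(working)}. I expect this step to be the main analytic obstacle.

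Once the nonlinear estimate is in hand, local existence, uniqueness, continuous dependence, and the blow-up alternative follow by standard Picard iteration in a small ball of $Y^1(I)$ for a short interval $I = I(u_0)$, chosen by shrinking $\|e^{it\Delta}u_0\|_{Z(I)}$ below the smallness threshold provided by the nonlinear estimate.

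For the failure of Lipschitz continuity when $d\ge 7$, the nonlinearity $\mathcal{N}(u) = \pm|u|^{4/(d-2)}u$ has total power $1 + 4/(d-2) < 2$, hence is only H\"older of order $4/(d-2) < 1$. The standard argument selects a pair of families of initial data, typically built from an $H^1$-concentrating bump (e.g.\ a rescaled bubble profile) together with a low-frequency perturbation of size $\varepsilon$, and computes by direct comparison in the Duhamel expansion that the $H^1$-distance of the two solutions at a fixed short time $t>0$ is bounded below by a constant multiple of $\varepsilon^{4/(d-2)}$, while the data lie at distance $\varepsilon$. Letting $\varepsilon \to 0$ produces an unbounded Lipschitz ratio and thus precludes any Lipschitz estimate for the flow map.
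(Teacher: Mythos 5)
This proposition is quoted in the paper as a known result from \cite{herr2011global,killip2016scale,LWP(working)}; the paper itself offers no proof, so there is no internal argument to compare against. Evaluating your sketch against what those references actually do, there is one genuine internal inconsistency and a couple of smaller misattributions.

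The central gap is that your plan to run ``standard Picard iteration in a small ball of $Y^{1}(I)$'' is logically incompatible with the very last sentence you are trying to prove. A Banach fixed-point argument in $Y^{1}$ requires a contraction, i.e.\ a Lipschitz difference estimate $\norm{\mathcal{N}(u)-\mathcal{N}(v)}_{N^{1}(I)}\lesssim (\text{small})\norm{u-v}_{Y^{1}(I)}$ uniformly on the ball, and any such contraction automatically hands you Lipschitz continuous dependence of the data-to-solution map on $H^{1}$. If the flow map is not Lipschitz for $d\ge 7$, then the contraction cannot close in $Y^{1}$ with respect to the $Y^{1}$-metric, and the existence/uniqueness/continuity package must come from a different mechanism (contraction in a strictly weaker metric while $Y^{1}$ is propagated separately, a compactness argument, or an iteration scheme adapted to the merely H\"older nonlinearity, as in \cite{LWP(working)}). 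Your sketch does not indicate how the fixed-point step survives once $a=\tfrac{4}{d-2}<1$; as written, the first and last paragraphs contradict each other.

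Two smaller issues. First, the role of $\ell^{2}$-decoupling \cite{bourgain2015proof,killip2016scale} is to supply the scale-invariant Strichartz estimate (\ref{eq:Bourgain Strichartz}) for all $d\ge3$; it has nothing to do with the additional difficulty that appears at $d\ge5$, which is the non-algebraic character of $\mathcal{N}$ (one cannot expand the nonlinearity as a finite multilinear form and sum frequency interactions). That difficulty is handled in \cite{LWP(working)} by new function-space constructions (the spaces $Z^{s}_{\theta}$ recalled in Definition \ref{def:Z^s_theta def}), which your generic ``$Z(I)$'' does not capture. Second, your failure-of-Lipschitz construction is too optimistic as stated: the assertion that the Duhamel comparison ``is bounded below by a constant multiple of $\varepsilon^{4/(d-2)}$'' requires showing that the leading nonlinear correction neither cancels nor is absorbed by higher iterates; this is the substance of the proof, not a formality, and the actual counterexamples in \cite{LWP(working)} are built from carefully chosen high-frequency configurations. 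In fact, the paper notes that on $\T^{d}$ even H\"older continuity fails for $d\ge7$, which a na\"ive $\varepsilon^{4/(d-2)}$ lower bound would not yield; this suggests the real mechanism is more delicate than H\"older-of-the-nonlinearity alone.
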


Although the local well-posedness has been shown for every dimension,
the local analysis is less understood and behaves worse for $d\ge5$. 

The local well-posedness result for $d\ge5$ in \cite{LWP(working)}
involved nonlinear estimates with limited rooms for adjustments, based
on function spaces newly introduced there.

The large-data global well-posedness of (\ref{eq:NLS T^d}) was shown
for $d=3,4$ \cite{ionescu2012energy,YUE2021754} and previously unknown
for $d\ge5$. For $d\ge5$, the nonlinearity is non-algebraic and
so the nonlinear part is not sufficiently decomposable. The flow map
is only known to be continuous (and the failure of H\"older continuity
was also shown in \cite{LWP(working)}). For dimensions $d=3,4$,
the results \cite{ionescu2012energy,YUE2021754} were shown via the
frameworks of \cite{kenig2006global} and \cite{ionescu2012curved},
based on strong stability theory and decomposability of the nonlinearity.

In this paper, we show the large-data global well-posedness of (\ref{eq:NLS T^d})
for higher dimensions $d$. (\ref{eq:NLS T^d}) enjoys the energy
and mass conservation laws:
\[
E(u)=\frac{1}{2}\int_{\T^{d}}\left|\na u\right|^{2}\pm\frac{1}{\frac{4}{d-2}+2}\left|u\right|^{\frac{4}{d-2}+2}dx
\]
and
\[
M(u)=\int_{\T^{d}}\left|u\right|^{2}dx.
\]
Considering scaling, (\ref{eq:NLS T^d}) is energy-critical. Depending
on the sign of $\mc N$, (\ref{eq:NLS T^d}) is defocusing if $\calN(u)=\left|u\right|^{\frac{4}{d-2}}u$
and focusing if $\calN(u)=-\left|u\right|^{\frac{4}{d-2}}u$.

We now state our main results. For the defocusing case, we show the
following:
\begin{thm}
\label{thm:defocusing GWP}Let $d\ge3$. (\ref{eq:NLS T^d}) with
the defocusing sign is globally well-posed.
\end{thm}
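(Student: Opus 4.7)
The plan is to mount a Kenig--Merle style contradiction argument, but restructured so that it never appeals to nonlinear stability theory or to inverse Strichartz inequalities, since these are exactly the tools that fail for $d\ge 5$. Suppose Theorem~\ref{thm:defocusing GWP} is false and set
\[
E_c := \sup\{E : \text{every } u_0 \in H^1(\T^d) \text{ with } E(u_0)\le E \text{ yields a global solution}\}.
\]
Because the defocusing energy controls $\|\na u\|_{L^2}$ uniformly in time, the only obstruction to global extension is the divergence of the $Y^1$ norm on a bounded time interval. By definition of $E_c$, one can select a sequence of maximal solutions $u_n$ with $E(u_n)\to E_c$ whose $Y^1$ norm on a fixed-length window diverges; Proposition~\ref{prop:LWP} gives boundedness of $u_n(0)$ in $H^1(\T^d)$.

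The heart of the matter is extracting a nontrivial bubble from this sequence. In dimensions $d=3,4$ one would apply a linear profile decomposition to $u_n(0)$ and promote it to a nonlinear decomposition via stability; for $d\ge 5$ neither step is available. Instead, I would construct a concentration mechanism directly for the nonlinear flow: identify space-time points $(t_n,x_n)$ and scales $\la_n\to 0$ at which $u_n$ accumulates a definite portion of its $Y^1$ norm, and show that the rescaled functions $\la_n^{(d-2)/2} u_n(t_n+\la_n^2 t, x_n+\la_n x)$ converge in an appropriate local sense to a nonzero finite-energy solution $v$ of the energy-critical NLS on $\R^d$. This is the ``formation of a nontrivial concentration'' announced in the abstract; the plausible substitute for an inverse Strichartz inequality is a quantitative pigeonholing of the $Y^1$ mass across dyadic frequency bands combined with a nonlinear tightness argument exploiting energy and mass conservation to prevent the profile from escaping in the limit.

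Once $v$ has been produced, the endgame follows the familiar rigidity route. The limit $v$ is a finite-energy defocusing solution on $\R^d$ whose energy, by an almost-orthogonality argument applied directly to the nonlinear flows, satisfies $E(v)\le E_c$. If $E(v)<E_c$, then $v$ scatters by known Euclidean results, contradicting the concentration of $Y^1$ mass in $u_n$; if $E(v)=E_c$, then $v$ is an almost-periodic solution of $\mathrm{NLS}(\R^d)$ with $E(v)=E_c$, and such an object is excluded in the defocusing regime by a Morawetz/virial identity. Either alternative collapses the assumption $E_c<\infty$.

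The main obstacle is precisely the extraction step. Without perturbation theory, one cannot ``subtract'' the bubble $v$ from $u_n$ and continue inductively — the core mechanism of the standard concentration-compactness proof. The substitute must go in the opposite direction: argue that the conservation laws and the local theory of \cite{LWP(working)} force, near any concentration point, a large portion of $u_n$ to look like a single rescaled $\R^d$ profile, transferring the combinatorial bookkeeping of the profile decomposition into a geometric statement about the nonlinear flow itself. This is where the non-algebraic character of $\calN$ for $d\ge 7$ must be confronted directly, and it is the step for which the paper's ``tools to analyze the concentration dynamics of the nonlinear flow'' must do the new work.
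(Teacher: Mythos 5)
Your proposal correctly identifies the central obstruction---that without stability theory one cannot ``subtract'' a bubble from $u_n$ and iterate---but the mechanism you propose for getting around it has a gap exactly where the paper does its new work, and your endgame is structurally different from the paper's (and has a flaw).

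First, the endgame. You set up a critical energy $E_c$ for NLS$(\T^d)$ and propose a Kenig--Merle dichotomy: if the extracted profile $v$ has $E(v)<E_c$ it scatters, ``contradicting the concentration of $Y^1$ mass in $u_n$''; if $E(v)=E_c$ it is almost periodic and killed by Morawetz. The first branch is circular: transferring the scattering of $v$ back to a statement about $u_n$ is precisely what a nonlinear perturbation/stability lemma does, and you have declared such lemmas unavailable. The paper avoids this issue entirely by \emph{not} running the dichotomy on $\T^d$ at all. It simply invokes the known $\R^d$ global well-posedness (Proposition~\ref{prop:defocusing R^d}) to get an a priori Strichartz bound on each weak-limit profile $v_{\le j}$, and then derives a contradiction from the \emph{linear} flow: a triple weak-limit computation (Lemma~\ref{lem:scatter R^d}, Lemma~\ref{lem:scatter T^d}) shows each profile leaves a weakly nonzero imprint on $\chi_{I_{n,j}}\,e^{it\De}u_n(0)$ in the frame $\mc O_{n,j}$, and since the intervals $I_{n,j}$ are disjoint in $[0,1)$, summing over $j$ forces $\|e^{it\De}u_n(0)\|_{L^{(2d+4)/(d-2)}_{t,x}}=\infty$, contradicting Strichartz. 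No rigidity step, no Morawetz, no critical energy on $\T^d$.

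Second, your ``almost-orthogonality argument applied directly to the nonlinear flows'' to obtain $E(v)\le E_c$ is precisely the kind of quantitative decomposition of the non-algebraic nonlinearity that the paper flags as unavailable for $d\ge 5$; the introduction explicitly states ``We do not use the conventionally used concept of orthogonalities between profiles.'' Your extraction step (``quantitative pigeonholing of the $Y^1$ mass across dyadic frequency bands combined with a nonlinear tightness argument'') is a reasonable gesture toward the paper's actual Proposition~\ref{prop:amp}, but it omits the genuine content there: the linear inverse property (Proposition~\ref{prop:inverse embedding}) only localizes a single dyadic block, leaving open the possibility that $u_n$ fragments into many small bubbles at different scales with no dominant one. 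Proposition~\ref{prop:amp} rules this out by showing that if all amplitudes vanish then the limiting profile would be a nonzero free evolution that is simultaneously zero at $t=-\infty$ (via the weak scattering Lemma~\ref{lem:scatter T^d}), a contradiction. That argument---free-evolution rigidity of the weak limit under the small-amplitude hypothesis---is the ``nontrivial concentration'' step, and it is absent from your proposal.

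In short: you diagnose the right disease but prescribe the wrong medicine. The paper replaces stability theory with weak-limit arguments and a triple-limit exchange, replaces profile orthogonality with disjointness of time intervals, and never runs a rigidity dichotomy on $\T^d$.
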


Theorem \ref{thm:defocusing GWP} is new for $d\ge5$, as mentioned
above.

For the focusing case, there is a threshold for global existence,
analogous to that on $\R^{d}$ (Proposition \ref{prop:focusing R^d}).
We adopt a modified energy introduced in \cite{YUE2021754}:
\begin{equation}
E_{c_{0}}(u):=E(u)+\frac{c_{0}}{2}\norm u_{L^{2}}^{2},\label{eq:E*}
\end{equation}

where $c_{0}=c_{0}(\T^{d})>0$ is the optimal constant from a Sobolev
embedding on $\T^{d}$; see Proposition \ref{prop:energy trapping}.

Using the energy trapping lemma developed in \cite{YUE2021754}, we
obtain the analogous result for focusing equations.
\begin{thm}
\label{thm:focusing GWP}Let $d\ge4$. Consider (\ref{eq:NLS T^d})
with the focusing sign. Let $u_{0}\in H^{1}(\T^{d})$ be data such
that
\[
\norm{\na u_{0}}_{L^{2}}^{2}+c_{0}\norm{u_{0}}_{L^{2}}^{2}<\norm{\na W}_{L^{2}}^{2}
\]
and
\[
E_{c_{0}}(u_{0})<E(W),
\]
where $W$ denotes the ground state of the energy-critical focusing
NLS on $\R^{d}$
\begin{equation}
W=\left(1+\frac{1}{d(d-2)}\left|x\right|^{2}\right)^{-\frac{d-2}{2}}.\label{eq:W}
\end{equation}
Then, there exists a unique global solution $u:\R\times\T^{d}\rightarrow\C$
to (\ref{eq:NLS T^d}).
\end{thm}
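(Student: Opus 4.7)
The plan is to reduce Theorem \ref{thm:focusing GWP} to the a priori control furnished by conservation laws and the energy trapping lemma of \cite{YUE2021754}, and then to feed this control into the (sign-independent) concentration-dynamics framework developed for Theorem \ref{thm:defocusing GWP}. In broad strokes, the focusing hypotheses on $u_{0}$ are exactly designed to convert into a uniform $H^{1}$ bound on the solution, at which point the focusing and defocusing problems become essentially interchangeable for the purpose of global existence.

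First, I would observe that the modified energy $E_{c_{0}}$ is conserved along the flow of (\ref{eq:NLS T^d}), being the sum of the conserved energy $E$ and a constant multiple of the conserved mass $M$. Applying the energy trapping lemma (Proposition \ref{prop:energy trapping}), the two subthreshold conditions on $u_{0}$ propagate: for every time $t$ in the maximal interval of existence $I\subset\R$, one obtains
\[
\norm{\na u(t)}_{L^{2}}^{2}+c_{0}\norm{u(t)}_{L^{2}}^{2}\le(1-\delta)\norm{\na W}_{L^{2}}^{2}
\]
for a fixed $\delta=\delta(E(W)-E_{c_{0}}(u_{0}))>0$. In particular, $u$ remains uniformly bounded in $H^{1}(\T^{d})$ throughout $I$, with a bound depending only on the initial gap.

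Next, I would invoke the tools developed to prove Theorem \ref{thm:defocusing GWP}. Those tools---a concentration-dynamics analysis of the nonlinear flow that bypasses perturbation theory---are based on multilinear $Y^{1}$-estimates and a continuation criterion governed by $\norm{u}_{H^{1}}$, and as such are insensitive to the sign of $\calN$. The coercive bound from the previous step therefore plays the same role here that energy conservation alone plays in the defocusing case, supplying the uniform $H^{1}$ control needed to exclude blow-up. Combined with the local theory of Proposition \ref{prop:LWP}, this yields $I=\R$ and hence global existence and uniqueness.

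The main obstacle---and the substantive point to verify---is that the concentration framework from the defocusing argument is genuinely sign-independent, so that once a uniform $H^{1}$ bound is in place it can be applied verbatim to the focusing flow. In particular, every multilinear estimate and every step of the concentration analysis must be checked to involve only $\norm{u}_{H^{1}}$ and not the definite sign of the potential energy. The restriction $d\ge4$ is inherited from the hypotheses of the energy trapping lemma in \cite{YUE2021754}; extending to $d=3$ would require a separate treatment of the sharp constant $c_{0}$ on $\T^{3}$, which is not pursued here.
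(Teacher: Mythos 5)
Your high-level strategy matches the paper's: conservation of $E_{c_0}$ and $M$ plus the energy trapping lemma (Proposition \ref{prop:energy trapping}) yield a uniform $H^1$ bound, after which the focusing problem is fed into the concentration-dynamics machinery of Proposition \ref{prop:main}, just as the paper declares ``the proofs are identical.'' But the proposal contains two genuine errors that would need to be repaired before the argument closes.

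First, the concentration framework is \emph{not} sign-independent in the sense you claim. Reread the proof of Proposition \ref{prop:main}: the a priori bound (\ref{eq:vj bound}) on the weak-limit cutoff solutions $v_{\le j}$ on $\R^d$ is obtained by invoking Proposition \ref{prop:defocusing R^d}, the defocusing GWP on $\R^d$. In the focusing case one must instead invoke Proposition \ref{prop:focusing R^d}, and to do so one must first verify that the weak limits $v_{\le j}$ produced by Lemma \ref{lem:weak lim is sol} themselves satisfy the sub-threshold hypotheses $\norm{\na v_{\le j}}_{L^2} < \norm{\na W}_{L^2}$ and $E(v_{\le j}) < E(W)$. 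The kinetic part is controlled by weak lower semicontinuity of $\norm{\na \cdot}_{L^2}$ together with the energy-trapping bound on the $\T^d$ solutions, but the energy condition requires an additional argument (the paper alludes to a ``Fatou property below the threshold energy'' for the functional $E$ in the Appendix). Your suggestion that one need only ``check that every multilinear estimate involves only $\norm{u}_{H^1}$'' misses the point: the obstruction is not in the multilinear estimates, which are indeed sign-blind, but in the passage to the $\R^d$ limit profile and the applicability of the sub-threshold focusing GWP there.

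Second, the attribution of the restriction $d\ge 4$ is incorrect. Proposition \ref{prop:energy trapping} is stated and holds for all $d\ge 3$. The restriction $d\ge 4$ enters precisely because Proposition \ref{prop:focusing R^d} --- the focusing GWP on $\R^d$ below the ground state for general nonradial data --- is only available for $d\ge 4$; the $\R^3$ case remains open. This is exactly what the paper's remark following Theorem \ref{thm:focusing GWP} says, and it is consistent with the observation in the previous paragraph that the $\R^d$ GWP is the one place in the argument that sees the focusing sign.
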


\begin{rem}
For $d=3$, Theorem \ref{thm:focusing GWP} remains open. This is
an essential challenge; the analogous GWP on $\R^{3}$ is still open
for general data.
\end{rem}

The energy-critical GWP of NLS on $\R^{d}$ has been extensively studied.
For the radial defocusing case, Bourgain \cite{bourgain1999global}
and Grillakis \cite{grillakis2000nonlinear} showed GWP for the defocusing
NLS in dimension $d=3$ under radial symmetry, where concentration
of a solution was observed. Since then, the defocusing GWP for $d\ge3$
has been shown for general nonradial data in \cite{colliander2008global,ryckman2007global,visan2007defocusing}.

Afterward, the concentration compactness method was introduced by
Kenig and Merle \cite{kenig2006global}. They showed GWP of the energy-critical
focusing NLS for $3\le d\le5$ under the radial symmetry, by reducing
the GWP of a general solution to that of an almost periodic solution
modulo symmetries. This approach was extended to general nonradial
data for $d\ge4$ \cite{killip2010focusing,dodson2019global}.

At the level of regularity theory, a key ingredient for the concentration
compactness roadmap is the profile decomposition, i.e., the approximation
of a solution by a sum of rescaled solutions. For this, the following
can be seen as conventional ingredients:

\medskip{}

(1) a strong (e.g., Lipschitz) stability theory,

(2) an inverse inequality to capture concentration,

(3) smallness of nonlinear interactions between profiles.

\medskip{}

In \cite{ionescu2012curved}, such profile decomposition analysis
was extended to curved domains. Once (1)--(3) above are provided,
the approach in \cite{ionescu2012curved} works robustly by approximating
solutions on a domain as spatial cutoffs of solutions on $\R^{d}$.
For dimensions $d=3,4$, where the nonlinearity $\mc N(u)=\pm|u|^{\frac{4}{d-2}}u$
is algebraic, (1)--(3) are fulfilled on $\T^{d}$ by the previous
achievements in Strichartz estimates \cite{bourgain2015proof,killip2016scale}
and function spaces \cite{herr2011global}. Consequently, the argument
in \cite{ionescu2012curved} was successfully applied to showing the
first large-data GWP results for the energy-critical defocusing NLS
on periodic domains: the rational torus $\T^{3}$ \cite{ionescu2012energy}
and cylinders $\T^{3}\times\R$ \cite{ionescu2012global}. Later,
Yue \cite{YUE2021754} suggested modified energies for the focusing
nonlinearity and showed sub-threshold GWP results on $\T^{4}$.

For $d\ge5$, where the nonlinearity is not algebraic, the regularity
behavior on $\T^{d}$ is less understood:

\medskip{}

(i) LWP has barely been shown and Lipschitz well-posedness fails for
$d\ge7$ (see \cite[Theorem 1.3]{LWP(working)}).

(ii) On the solution space of LWP, the inverse property only partially
holds (i.e., only within a frequency localization $P_{N}$; see Proposition
\ref{prop:inverse embedding}).

(iii) Quantitative analysis of decompositions of the nonlinearity
is less obvious due to the non-algebraic nonlinearity.

\medskip{}

From (i)--(iii), what can be directly said about a blow-up solution
is only the existence of a possibly unbounded number of concentrations
of different scales whose sizes sum up to $\epsilon>0$. In particular,
the conventional profile decomposition is not obvious.

In proving Theorem \ref{thm:defocusing GWP} and Theorem \ref{thm:focusing GWP},
we intend to present a robust argument for the global well-posedness.
In the limited circumstances where (1)--(3) fail, we reconstruct
the concentration compactness roadmap for $d\ge5$ using an approach
different from the profile decomposition method.

We outline the strategy for our proof. First, we capture spacetime
concentrations and limit profiles of nonlinear solutions using weak
limits. The effect of a weak limit profile on the dynamics of the
original solution on $\T^{d}$ is observed as scattering-like behavior
in a distributional sense. More precisely, we show that a bounded
sequence of solutions on either $\T^{d}$ or $\R^{d}$ converges uniformly
to the sequence of their scattering limits as $t\rightarrow-\infty$
in the distributional sense.

Then, we upgrade the frequency-localized linear inverse theorem to
concentration of the nonlinear flow (Proposition \ref{prop:amp}).
This is a critical step in this work; we prove that the solution does
not fully split into small-amplitude bubbles of multiple scales. We
formalize the following heuristic: if a solution splits into small-amplitude
bubbles distinguishable in a distributional sense, a maximum-amplitude
bubble evolves in some sense like a free evolution.

With the aforementioned tools, we prove our main results, Theorem
\ref{thm:defocusing GWP} and Theorem \ref{thm:focusing GWP}. Here,
the main difficulty is that we only know an $\epsilon$-amount of
concentration and have no information about the remainder. We do not
use the conventionally used concept of orthogonalities between profiles.
Instead, we view a certain triple weak limit in two different perspectives
for a proof by contradiction. (For details, see the proof of Proposition
\ref{prop:main}).

Our proofs of Theorem \ref{thm:defocusing GWP} and Theorem \ref{thm:focusing GWP}
presume the global well-posedness results on $\R^{d}$, as these are
well-known. Nevertheless, our method can also be applied to carry
out the concentration compactness roadmap on $\R^{d}$ itself, up
to the reduction to almost periodic solutions modulo symmetries. This
can be seen as a reproof of the global well-posedness on $\R^{d}$,
alternative to the profile decomposition approach.\footnote{Note that the previous proof of GWP on $\R^{d}$ for $d\ge7$ was
based on H\"older continuity of the solution map shown in \cite{tao2005stability}.
\cite{tao2005stability} was a nontrivial result by itself; for instance,
a nontrivial package of Strichartz estimates that holds on $\R^{d}$
(called exotic Strichartz in \cite{tao2005stability}) was used.}

This issue is briefly noted in Appendix A.

\medskip{}
In Section \ref{sec:Preliminaries}, we provide preliminary materials.
In Section \ref{sec:Basic-properties-of}, we provide primary concepts
and basic facts on profiles and weak limits. In Section \ref{sec:Solution-spaces-and},
we define a solution norm and prove preparatory facts related to it.
In Section \ref{sec:Global-well-posedness-of NLS}, we develop the
main arguments and prove Theorem \ref{thm:defocusing GWP} and Theorem
\ref{thm:focusing GWP}.

\subsection*{Acknowledgements}

The author appreciates Soonsik Kwon for helpful communications and
encouragements. The author is partially supported by National Research
Foundation of Korea, RS-2019-NR040050 and NRF-2022R1A2C1091499.

Also, a part of this work was done while the author was visiting Bielefeld
University through International Research Training Group 2235. He
appreciates their kind hospitality.

\section{\label{sec:Preliminaries}Preliminaries}

\subsection{Notations}

We denote $A\les B$ if $A\le CB$ holds for some constant $C$.

Given an interval $I\subset\R$, we denote by $\chi_{I}$ the sharp
cutoff of $I$.

For simplicity, throughout this paper we assume our domain is the
square torus $\T^{d}=\R^{d}/(2\pi\Z)^{d}$. Since we do not rely on
specific properties of the square torus, such as number-theoretic
arguments on frequencies, the proof of Theorem \ref{thm:defocusing GWP}
and Theorem \ref{thm:focusing GWP} in this paper stays valid on any
rectangular torus.

We denote $a:=\frac{4}{d-2}$ (i.e., $\mc N(u)=\pm|u|^{a}u$).

\subsubsection*{Fourier truncations}

We handle functions of spacetime variables $f(t,x)$ and $f(x)$ for
$x\in\T^{d}$ and $t\in\R$. We denote the Fourier series of $f$
with the associated variable $x$ by either $\F_{x}f$ or $\widehat{f}$.

We use Littlewood-Paley projection operators. Denote the set of natural
numbers by $\N=\left\{ 0\right\} \cup\Z_{+}$ and dyadic numbers by
$2^{\N}$. Let $\psi:\R\rightarrow[0,\infty)$ be a smooth even bump
function such that $\psi|_{[-1,1]}\equiv1$ and $\text{supp}(\psi)\subset[-\frac{11}{10},\frac{11}{10}]$.
For a dyadic number $N\in2^{\N}$, we denote by $\psi_{N}$ the function
$\psi_{N}(\xi)=\psi(\xi/N)$ and $P_{\le N}$ the Fourier multiplier
induced by $\psi_{N}(\xi_{1})\cdots\psi_{N}(\xi_{d})$. We denote
$P_{N}:=P_{\le N}-P_{\le N/2}$ with the convention $P_{\le2^{-k}}:=0$
for $k>0$. For simplicity, we use abridged notations $u_{N}:=P_{N}u$
and $u_{\le N}:=P_{\le N}u$ for a function $u:\T^{d}\rightarrow\C$.

We also use time Fourier projections, denoted with the superscript
$t$. For each $N\in2^{\Z}$, we denote by $P_{\le N}^{t}$ the Fourier
projection for the time variable $t$ with the multiplier $\psi_{N}(\cdot)$.

Analogous to $\T^d$, we use smooth Littlewood-Paley operators on $\R^d$. We use the same notation, except that we allow $N\in 2^\Z$, i.e., we consider the homogeneous Littlewood-Paley cutoffs for $\R^d$. 

\subsection{Function spaces}

In this subsection, we collect function space notations of functions
of variables $x\in\T^{d}$ and $(t,x)\in\R\times\T^{d}$. These spaces
are either $L^{p}$-based or generated from atomic spaces. For a Banach
space $B$, we denote its norm by $\norm{\cdot}_{B}$. When $B$ is
defined on the domain $\R\times\T^{d}$, we denote by $B'$ its dual
space with respect to the inner product $\left\langle u,v\right\rangle _{L^{2}(\R\times\T^{d})}:=\int_{\R\times\T^{d}}\overline{u}vd(t,x)$.

We denote by $\mc S(\R^{d})$ and $\mc S'(\R^{d})$ the Schwartz space
and the space of tempered distributions on $\R^{d}$.

For $q\in[1,\infty]$ and a Banach space $E$ defined on $\T^{d}$,
we denote by $L^{q}E=L_{t}^{q}E$ the mixed norm
\[
\norm u_{L^{q}E}:=\left(\int_{\R}\norm{u(t)}_{E}^{q}dt\right)^{1/q}.
\]

We use Besov-type norms for the time variable. Let $s\in\R$ and $p,q\in[1,\infty]$.
For a Banach space $E$ defined on either $\T^{d}$ or $\R^{d}$,
we denote by $B_{p,q}^{s}E=(B_{p,q}^{s})_{t}E_{x}$ the (function-valued)
Besov space defined as the dyadic summation on time frequency cutoffs
\[
\|u\|_{B_{p,q}^{s}E}:=\left(\sum_{N\in2^{\N}}N^{qs}\|P_{N}^{t}u\|_{L^{p}E}^{q}\right)^{1/q}+\|P_{\le1}^{t}u\|_{L^{p}E}.
\]
We also use norms concerning $\ell^{q}$-summations of spatial frequency-localized
pieces. Let $E$ be a function space defined on $\R\times\T^{d}$.
We denote 
\[
\|u\|_{\ell_{s}^{q}E}:=\left(\sum_{N\in2^{\N}}N^{qs}\|u_{N}\|_{E}^{q}\right)^{1/q}+\|u_{\le1}\|_{E}.
\]
When $s=0$, the subscript $s$ in $\ell_{s}^{q}$ is omitted and
we simply write it as $\ell^{q}$.
\begin{lem}
Let $d\in\N$, $p\in(1,\infty)$, $\theta\in(0,1)$, and $s_{0}\ne s_{1}$
be real numbers. Let $s_{\theta}=(1-\theta)s_{0}+\theta s_{1}$. For
any function $f\in B_{p,\infty}^{s_{0}}L^{p}\cap B_{p,\infty}^{s_{1}}L^{p}$
on $\R\times\T^{d}$, we have
\begin{equation}
\norm f_{B_{p,1}^{s_{\theta}}L^{p}}\les_{s_{0},s_{1},\theta,p}\norm f_{B_{p,\infty}^{s_{0}}L^{p}}^{1-\theta}\norm f_{B_{p,\infty}^{s_{1}}L^{p}}^{\theta}.\label{eq:Besov real interp E s0s1}
\end{equation}
\end{lem}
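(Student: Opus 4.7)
This is a standard real-interpolation-type inequality between Besov spaces having the same base space $L^p L^p$, and the plan is to prove it directly from the definition of the Besov norm by a dyadic split. Without loss of generality assume $s_0 < s_1$, and write $A := \norm f_{B_{p,\infty}^{s_0}L^p}$, $B := \norm f_{B_{p,\infty}^{s_1}L^p}$. The key observation is that for every dyadic $N \ge 1$, the definition of the $B_{p,\infty}^{s_i}$ norm yields the two one-sided bounds
\[
\|P_N^t f\|_{L^p L^p} \le N^{-s_0} A \quad \text{and} \quad \|P_N^t f\|_{L^p L^p} \le N^{-s_1} B,
\]
while $\|P_{\le 1}^t f\|_{L^p L^p} \le \min(A,B)$.

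With these in hand, the plan is to split the defining sum for $\|f\|_{B_{p,1}^{s_\theta} L^p}$ at a dyadic threshold $N_0 \in 2^{\N}$ to be chosen. On the low-frequency side $N \le N_0$, I would use the $s_0$ bound to get
\[
\sum_{\substack{N \le N_0 \\ N \in 2^{\N}}} N^{s_\theta} \|P_N^t f\|_{L^p L^p} \le A \sum_{N \le N_0} N^{s_\theta - s_0} \lesssim_{s_0,s_1,\theta} N_0^{\theta(s_1-s_0)} A,
\]
which converges because $s_\theta - s_0 = \theta(s_1-s_0) > 0$ makes this a geometric sum dominated by its largest term. On the high-frequency side $N > N_0$, I would use the $s_1$ bound to get
\[
\sum_{N > N_0} N^{s_\theta} \|P_N^t f\|_{L^p L^p} \le B \sum_{N > N_0} N^{-(1-\theta)(s_1-s_0)} \lesssim_{s_0,s_1,\theta} N_0^{-(1-\theta)(s_1-s_0)} B,
\]
again a convergent geometric sum since the exponent is negative.

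It then remains to choose $N_0$ to balance these two contributions. Optimizing gives $N_0^{s_1 - s_0} \sim B/A$, i.e., $N_0$ chosen as the dyadic integer closest to $(B/A)^{1/(s_1-s_0)}$ (with the trivial cases $A = 0$ or $B = 0$ handled separately: if either vanishes then $f = 0$ in the respective frequency sense and the bound is immediate). With this choice, both partial sums are $\lesssim A^{1-\theta} B^\theta$, and the low-frequency contribution $\|P_{\le 1}^t f\|_{L^p L^p} \le \min(A,B) \le A^{1-\theta} B^\theta$ is absorbed. There is no real obstacle here, only bookkeeping of the signs to ensure that on each side of the cutoff the summand decays away from $N_0$ so that the geometric sum is controlled by its endpoint.
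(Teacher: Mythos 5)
Your proof is correct and follows essentially the same route as the paper: reduce to the scalar estimate for $g(N) := \|P_N^t f\|_{L^pL^p}$, observe $g(N)\le\min\{AN^{-s_0},BN^{-s_1}\}$, and sum dyadically — your explicit split at the crossover scale $N_0\sim(B/A)^{1/(s_1-s_0)}$ is exactly what the paper's terse ``dyadic summation on the inequality'' is doing, since the min switches between the two bounds precisely at $N_0$. The only small bookkeeping to note is that $N_0$ must lie in $2^{\N}$, so when $B\le A$ you should take $N_0=1$; in that case the entire sum (including $\|P_{\le 1}^t f\|_{L^pL^p}$) is simply $\lesssim B\le A^{1-\theta}B^{\theta}$, consistent with what you claim.
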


\begin{proof}
Equivalently, it suffices to show for a positive function $g:2^{\N}\rightarrow\R$
that
\begin{equation}
\sum_{N\in2^{\N}}N^{s_{\theta}}g(N)\les_{s_{0},s_{1},\theta,p}\left(\sup_{N\in2^{\N}}N^{s_{0}}g(N)\right)^{1-\theta}\cdot\left(\sup_{N\in2^{\N}}N^{s_{1}}g(N)\right)^{\theta}=:A^{1-\theta}B^{\theta}.\label{eq:interpolation proof 1}
\end{equation}
(\ref{eq:interpolation proof 1}) follows from a dyadic summation
on the inequality $g(N)\le\min\left\{ AN^{-s_{0}},BN^{-s_{1}}\right\} $.
\end{proof}
Every weak limit we consider in this paper is in the distribution
sense. Given a sequence of functions $\left\{ f_{n}\right\} $, we
denote by $\lim_{n}f_{n}$ the weak limit of $f_{n}$ (if it exists).

For two function norms $\norm{\cdot}_{B_{1}}$ and $\norm{\cdot}_{B_{2}}$,
we denote by $B_{1}\cap B_{2}$ and $B_{1}+B_{2}$ the norms

\[
\|f\|_{B_{1}\cap B_{2}}:=\|f\|_{B_{1}}+\norm f_{B_{2}}
\]
and
\[
\norm f_{B_{1}+B_{2}}:=\inf_{f_{1}+f_{2}=f}\norm{f_{1}}_{B_{1}}+\norm{f_{2}}_{B_{2}}.
\]

\subsection{Schr\"odinger operators, Strichartz estimates, and atomic spaces\label{subsec:Schr=0000F6dinger-operators,-Strichart}}

In this subsection, we collect facts related to the Schr\"odinger operator.
More specifically, we present Strichartz estimates for the linear
Schr\"odinger operator on tori, the atomic spaces, and the Galilean
transform.

We begin with notations related to the linear Schr\"odinger operator.
For a function $\phi:\T^{d}\rightarrow\C$ and time $t\in\R$, we
denote by $e^{it\De}\phi(x)$ the linear evolution

\[
e^{it\De}\phi=\mathcal{F}_{x}^{-1}(e^{-it\left|\xi\right|^{2}}\wt{\phi}).
\]

We denote by $K^{+}$ the retarded Schr\"odinger operator; for a function
$f:\R\times\T^{d}\rightarrow\C$, $K^{+}f$ is given as
\begin{equation}
K^{+}f(t):=-i\int_{-\infty}^{t}e^{i(t-s)\De}f(s)ds.\label{eq:K+}
\end{equation}

\subsubsection*{Strichartz estimates and atomic spaces}

We state a kernel estimate for the Schr\"odinger operator on tori. This
was first shown by Bourgain in \cite{bourgain1993fourier}.
\begin{prop}[Kernel bound, \cite{bourgain1993fourier,killip2016scale}]
\label{prop:kernel bound bourgain}Consider a rectangular torus $\T^{d}=\R^{d}/(\theta_{1}\Z\times\cdots\times\theta_{d}\Z)$
with $\theta_{1},\ldots,\theta_{d}>0$. Let $(a_{j},q_{j})$ be a
pair of coprime integers for $j=1,\ldots,d$, $t\in\R$, and $N\in2^{\N}$
be a dyadic number such that
\[
1\le q_{j}<N\text{ and }\left|\frac{2\pi t}{\theta_{j}^{2}}-\frac{a_{j}}{q_{j}}\right|\le\frac{1}{q_{j}N}.
\]
Then, we have
\begin{equation}
\norm{e^{it\De}\delta_{N}}_{L^{\infty}(\T^{d})}\les\prod_{j=1}^{d}\frac{N}{\sqrt{q_{j}}\left(1+N\left|\frac{2\pi t}{\theta_{j}^{2}}-\frac{a_{j}}{q_{j}}\right|^{1/2}\right)}.\label{eq:Bourgain bound}
\end{equation}
\end{prop}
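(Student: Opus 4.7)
The plan is to exploit the tensor structure of both $e^{-it|\xi|^2}=\prod_j e^{-it\xi_j^2}$ and the multiplier $\psi_N(\xi_1)\cdots\psi_N(\xi_d)$. On the rectangular torus, Fourier modes along the $j$-th axis lie in $(2\pi/\theta_j)\Z$, so the kernel $e^{it\Delta}\delta_N(x)$ factors (up to harmless constants) as a product of one-dimensional kernels of the form
\[
S_N(s,y):=\sum_{k\in\Z}\psi_N(k)\,e^{-isk^2+iky},\qquad s=2\pi t/\theta_j^{2},
\]
after rescaling each coordinate. Taking $L^\infty$ in $x$ distributes over the product, so it suffices to establish the one-dimensional estimate
\[
\sup_{y\in\R}|S_N(s,y)|\les\frac{N}{\sqrt{q}\,\bigl(1+N|s-a/q|^{1/2}\bigr)}
\]
whenever $1\le q<N$ and $|\beta|:=|s-a/q|\le 1/(qN)$, since the $d$-dimensional bound then follows coordinatewise.

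For the one-dimensional bound, I would use a Hardy--Littlewood style major-arc decomposition. Write $k=qm+r$ with $r\in\{0,\ldots,q-1\}$ and $m\in\Z$. Because $(2\pi a/q)\cdot(q^2m^2+2qmr)\in 2\pi\Z$, the rational part of the quadratic phase depends only on $r$, i.e., $e^{-i(2\pi a/q)k^2}=e^{-i(2\pi a/q)r^2}$. Setting $\beta=s-a/q$, the sum factors as
\[
S_N(s,y)=\sum_{r=0}^{q-1}e^{-i(2\pi a/q)r^2+iry}\;\sum_{m\in\Z}\psi_N(qm+r)\,e^{-i\beta(qm+r)^2+iqmy}.
\]
The outer sum is, after completing the square in $r$, a quadratic Gauss sum and the classical evaluation gives it modulus exactly $\sqrt{q}$ when $(a,q)=1$. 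For the inner sum I would apply Poisson summation in $m$, converting it into a discrete sum of oscillatory integrals against the smooth bump $\psi_N(qu+r)$ with quadratic phase of leading coefficient $-\beta q^2$. The amplitude has support of length $\sim N/q$ in $u$, and a standard van der Corput / stationary phase estimate yields each such integral bounded by $\min(N/q,\,1/(q|\beta|^{1/2}))\les(N/q)/(1+N|\beta|^{1/2})$; rapid decay of the smooth Fourier transform then collapses the Poisson-dual sum to $O(1)$ terms. Multiplying the Gauss sum bound $\sqrt q$ by the inner estimate $(N/q)/(1+N|\beta|^{1/2})$ produces exactly the claimed one-dimensional bound, and the product over $j$ yields (\ref{eq:Bourgain bound}).

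The main technical obstacle I expect is the Poisson/stationary-phase step: one must verify the estimate uniformly in the shift parameter $r$ and the frequency $y$, and show that the Poisson-dual summation contributes no logarithmic loss. The condition $q<N$ guarantees that the inner sum has genuinely many terms so that the Riemann-sum-to-integral comparison is accurate, while the restriction $|\beta|\le 1/(qN)$ controls the quadratic phase on the full support of $\psi_N(qm+r)$ and prevents the integration-by-parts errors from exceeding the main term. Everything else (coordinatewise tensorization, the Gauss sum evaluation, and smoothness of $\psi_N$) is standard.
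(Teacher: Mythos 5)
The paper does not prove this proposition from scratch; it simply cites \cite[Lemma 2.2]{killip2016scale} for the case $0\le a_j\le q_j$ and extends to general $a_j\in\Z$ by the time-periodicity of $e^{it\partial_x^2}$. Your proposal, by contrast, reconstructs the standard Bourgain--Killip--Visan argument behind that cited lemma: tensorization to one dimension, major-arc decomposition $k=qm+r$, Poisson summation in $m$, stationary phase for the resulting integral, and a Gauss-sum bound for the $r$-sum. This is a genuinely different (and self-contained) route, and the architecture is sound.

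A few points where your write-up is imprecise and would need tightening if you wanted a complete proof rather than a sketch. First, the displayed factorization
\[
S_N=\sum_{r=0}^{q-1}e^{-i(2\pi a/q)r^2+iry}\,\sum_{m\in\Z}\psi_N(qm+r)\,e^{-i\beta(qm+r)^2+iqmy}
\]
is not a product of an outer Gauss sum and a single inner sum: the inner sum still depends on $r$ through both $\psi_N(qm+r)$ and the $\beta$-phase. The way to rescue the claimed $\sqrt q$ gain is to apply Poisson summation first; the leading ($l=0$) term becomes, after the change of variable $v=qu+r$,
\[
\frac{e^{-iry}}{q}\int\psi_N(v)\,e^{-i\beta v^2+ivy}\,dv,
\]
and the factor $e^{-iry}$ cancels the $e^{iry}$ in the outer sum, leaving a genuine Gauss sum $\sum_r e^{-2\pi iar^2/q}$ times an $r$-independent integral. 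Your narrative puts the Gauss-sum evaluation before the Poisson step, which as written does not close. Second, ``modulus exactly $\sqrt q$'' is not literally true: for even $q$ the quadratic Gauss sum has modulus $\sqrt{2q}$ or vanishes. What you actually need is the upper bound $O(\sqrt q)$, which does hold uniformly for $(a,q)=1$ (the degenerate cases only help). Third, the rescaling constants are glossed over: the $j$-th Fourier modes are $(2\pi/\theta_j)\Z$, so the one-dimensional cutoff is $\psi_N((2\pi/\theta_j)k)$, not $\psi_N(k)$, and the effective quadratic-phase coefficient must be matched to $2\pi t/\theta_j^2$ with the correct $2\pi$ normalization; these only shift constants in the $\lesssim$ but should be tracked if you present the argument in full. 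With these corrections the proof goes through, and incidentally it also handles general $a_j\in\Z$ automatically (the Gauss sum only depends on $a_j$ modulo $q_j$), so you do not need the separate time-periodicity remark the paper uses to extend the cited lemma.
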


\begin{proof}
(\ref{eq:Bourgain bound}) is given in \cite[Lemma 2.2]{killip2016scale}
for $0\le a_{j}\le q_{j}$. Due to the time-periodicity of $e^{it\d_{x}^{2}}$,
(\ref{eq:Bourgain bound}) holds for general $a_{j}\in\Z$.
\end{proof}
Although (\ref{eq:Bourgain bound}) depends on the ratio of side lengths
of $\T^{d}$, the dependence is irrelevant to our paper. Indeed, (\ref{eq:Bourgain bound})
is used only in the proof of the extinction lemma (\ref{eq:kernel 0}).
In there, the side lengths $\theta_{j}$ are irrelevant. That is the
only place where the rationality of side lengths is involved. For
this reason, one can verify that the proof is valid for any torus.
Hence, for simplicity of presentation, we assume that our domain $\T^{d}$
is a square torus.

The next proposition is a scale-invariant Strichartz estimate for
general tori. For the rational torus, this result follows from the
$\ell^{2}$-decoupling theorem by Bourgain and Demeter \cite{bourgain2015proof}.
For general tori, the subcritical version was first shown in \cite{bourgain2015proof},
then was sharpened to the critical scale in \cite{killip2016scale}.
\begin{prop}[\cite{bourgain1993fourier,bourgain2015proof,killip2016scale}]
Let $p\in(\frac{2(d+2)}{d},\infty)$ and $\s=\frac{d}{2}-\frac{d+2}{p}>0$.
Let $I\subset\R$ be a finite interval. Then, we have
\begin{equation}
\sup_{N\in2^{\N}}\|P_{N}e^{it\De}f\|_{L_{t,x}^{p}(I\times\T^{d})}\les_{p,I}\|f\|_{H^{\s}(\T^{d})}.\label{eq:Bourgain Strichartz}
\end{equation}
\end{prop}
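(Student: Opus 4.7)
The plan is to reduce to a frequency-localized Strichartz bound and then invoke the $\ell^2$-decoupling theorem on the square torus, with an additional $TT^*$ and kernel-based argument for the general case. Since the supremum over $N$ is outside the $L^p$ norm on the left, it suffices to establish
\begin{equation*}
\|P_N e^{it\Delta} f\|_{L^p_{t,x}(I\times\T^d)} \lesssim_{p,I} N^\sigma \|P_N f\|_{L^2(\T^d)}
\end{equation*}
uniformly in $N\in 2^\N$, for the given exponent $p>p_c:=\frac{2(d+2)}{d}$. By time translation and monotonicity in the interval, I may reduce to $I=[0,1]$.

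On the square torus, $P_N e^{it\Delta} f$ is spacetime-Fourier supported on the arithmetic paraboloid patch $\{(\xi,-|\xi|^2):\xi\in\Z^d,\,|\xi|\sim N\}$. A parabolic rescaling of space and time by $N$ and $N^2$ identifies this patch with the unit paraboloid modulo an $O(N^{-2})$ thickening, with modes located on the dilated lattice $N^{-1}\Z^d$. The Bourgain-Demeter $\ell^2$-decoupling theorem \cite{bourgain2015proof} for the paraboloid at the critical exponent $p_c$, applied to $N^{-1}$-caps, then yields the estimate at $p=p_c$ with an $N^\epsilon$ loss for every $\epsilon>0$. Interpolating this with the trivial Bernstein bound $\|P_N u\|_{L^\infty_{t,x}} \lesssim N^{d/2}\|P_N f\|_{L^2}$ at exponents $p>p_c$ delivers the claimed $N^\sigma$ bound; the strict inequality $p>p_c$ is precisely what allows the $\epsilon$-loss from decoupling to be absorbed.

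For a general rectangular torus with side lengths $\theta_j$, the spatial Fourier lattice $\theta_1^{-1}\Z\times\cdots\times\theta_d^{-1}\Z$ is anisotropic and the decoupling argument does not apply verbatim, since the anisotropic dilation that would normalize the lattice distorts the paraboloid. Killip-Visan \cite{killip2016scale} instead proceed via a $TT^*$ framework combined with the Bourgain kernel bound of Proposition \ref{prop:kernel bound bourgain}: one dualizes to a kernel estimate for $P_N e^{it\Delta} P_N$ in $L^{p/2}_{t,x}$, decomposes $[0,1]$ into Dirichlet-type major and minor arcs adapted to each $\theta_j$, bounds the kernel on major arcs using the proposition, and combines this with the trivial $L^2_{t,x}$ orthogonality estimate by interpolation. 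The main obstacle is carrying out the simultaneous multi-scale Dirichlet decomposition across $d$ directions to handle the anisotropy while preserving the sharp scale-invariant exponent $\sigma$; this is the principal technical content of \cite{killip2016scale}, and invoking their result directly completes the proof.
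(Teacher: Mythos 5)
The paper does not prove this proposition; it is quoted directly from \cite{bourgain1993fourier,bourgain2015proof,killip2016scale}, with the note that the rational-torus case comes from decoupling and the general-torus, scale-invariant (no $\epsilon$-loss) form from Killip--Visan. Your proposal attempts to supply the underlying argument, and it contains a genuine gap in the square-torus step.

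The interpolation you describe does not remove the $\epsilon$-loss. Decoupling at $p_c=\frac{2(d+2)}{d}$ gives $\|P_N e^{it\Delta}f\|_{L^{p_c}}\lesssim_\epsilon N^\epsilon\|f\|_{L^2}$, and Bernstein gives $\|P_N e^{it\Delta}f\|_{L^\infty}\lesssim N^{d/2}\|f\|_{L^2}$. Interpolating these at $L^p$ with $\frac{1}{p}=\frac{1-\theta}{p_c}$ produces the bound $N^{\epsilon(1-\theta)+\theta d/2}=N^{\sigma+\epsilon p_c/p}$; the $\epsilon$-loss is shrunk by the factor $p_c/p<1$ but does not vanish, and the implicit constant still blows up as $\epsilon\to 0$. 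Thus this route gives only $\lesssim_{\epsilon'} N^{\sigma+\epsilon'}$ for every $\epsilon'>0$, which is strictly weaker than the scale-invariant bound $\lesssim N^\sigma$ claimed in \eqref{eq:Bourgain Strichartz}. Eliminating the $\epsilon$ at subcritical $p$ is precisely the nontrivial content: for rational tori it requires Bourgain's original level-set/lattice-point counting argument (not interpolation), and for general tori the argument of \cite{killip2016scale}, which combines decoupling with the kernel bound (Proposition \ref{prop:kernel bound bourgain}) and a careful Galilean/scale decomposition rather than a straight major--minor arc splitting of the $TT^*$ kernel as you sketch. Since you do ultimately invoke \cite{killip2016scale} for the general case (which subsumes the square torus), the proposal lands on the correct reference, but the intermediate ``elementary'' derivation for the square torus should be dropped or replaced by a pointer to the actual $\epsilon$-removal argument.
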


Next, we recall the definition of the function space $Y^{s}$ first
appeared in \cite{herr2011global}, which is the solution norm of
LWP (Proposition \ref{prop:LWP}). For a general theory, we refer
to \cite{koch2014dispersive}, \cite{herr2011global}, and \cite{hadac2009well}.
\begin{defn}[\cite{herr2011global}]

Let $\mathcal{Z}$ be the collection of finite non-decreasing sequences
$\left\{ t_{k}\right\} _{k=0}^{K}$ in $(-\infty,\infty]$. Let $V_{\rc}^{2}$
be the space of right-continuous functions $f:\R\rightarrow\C$ satisfying
$\lim_{t\rightarrow-\infty}f(t)=0$, equipped with the norm

\[
\|f\|_{V_{\rc}^{2}}:=\left(\sup_{\left\{ t_{k}\right\} _{k=0}^{K}\in\mc Z}\sum_{k=1}^{K}\left|f(t_{k})-f(t_{k-1})\right|^{2}\right)^{1/2},
\]
where the convention $f(\infty)=0$ is used. For $s\in\R$, we define
$Y^{s}$ as the space of $u:\R\times\T^{d}\rightarrow\C$ such that
$\widehat{u}(\xi)$ lies in $V_{\rc}^{2}$ for each $\xi\in\Z^{d}$
and has a finite norm

\[
\|u\|_{Y^{s}}:=\left(\sum_{\xi\in\Z^{d}}\jp{\xi}^{2s}\|e^{it|\xi|^{2}}\widehat{u(t)}(\xi)\|_{V_{\rc}^{2}}^{2}\right)^{1/2}<\infty.
\]
While the space $Y^{s}$ is defined on the full-time domain $\R\times\T^{d}$,
since Strichartz estimates such as (\ref{eq:Bourgain Strichartz})
depend on the size of the time interval, we often restrict the time
interval to a finite interval $I\subset\R$, and denote this restriction
by $Y^{s}(I)$.
\end{defn}

\begin{prop}[Strichartz estimates, \cite{herr2011global,hadac2009well,hadac2010erratum}]
\label{prop:atomic space props}Let $p\in(\frac{2(d+2)}{d},\infty)$
and $\s=\frac{d}{2}-\frac{d+2}{p}$. For $s\in\R$, we have the following
estimates:
\begin{enumerate}
\item $Y^{s}=\ell^{2}Y^{s}\hook L^{\infty}H^{s}$.
\item For $f\in Y^{s}$, $f$ is right-continuous in $H^{s}(\T^{d})$ and
for $t_{0}\in\R$, $\lim_{t\rightarrow t_{0}^{-}}f(t)$ exists in
$H^{s}(\T^{d})$.
\item Let $I\subset\R$ be a finite interval. For $N\in2^{\N}$, we have
the estimate
\begin{equation}
\|\chi_{I}P_{\le N}u\|_{L_{t,x}^{p}}\les_{p,I}N^{\s}\|P_{\le N}u\|_{Y^{0}}.\label{eq:cube Strichartz}
\end{equation}
\item For a function $f:\R\times\T^{d}\rightarrow\C$, we have
\begin{equation}
\norm{K^{+}f}_{Y^{s}}\les\norm f_{(Y^{-s})'}.\label{eq:U2V2}
\end{equation}
\end{enumerate}
\end{prop}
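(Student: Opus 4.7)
The plan is to derive each part as a specialization of the abstract theory of $U^{p}$/$V^{p}$-spaces developed in \cite{hadac2009well,koch2014dispersive}, combined with the transference principle.

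Parts (1) and (2) are essentially bookkeeping from the definition. The equality $Y^{s}=\ell^{2}Y^{s}$ follows by regrouping the $\ell^{2}(\Z^{d})$-sum defining $\norm{\cdot}_{Y^{s}}$ into dyadic shells, since $\widehat{P_{N}u}(\xi)$ is supported on $|\xi|\sim N$ and $\jp{\xi}\sim N$ there. The embedding $Y^{s}\hook L^{\infty}H^{s}$ I would obtain from the pointwise bound $|f(t)|\le\norm f_{V^{2}}$ (valid because $V_{\rc}^{2}$-functions vanish at $-\infty$), applied Fourier-coefficient-wise and combined with Plancherel. Part (2) reduces to the structural properties of $V_{\rc}^{2}$: each coefficient $e^{it|\xi|^{2}}\widehat{u(t)}(\xi)$ is right-continuous with one-sided left limits at every $t_{0}\in\R$. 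Upgrading pointwise-in-$\xi$ convergence to $H^{s}$-convergence is then a dominated-convergence argument, with summable majorant provided by $\jp{\xi}^{2s}\norm{e^{it|\xi|^{2}}\widehat{u(t)}(\xi)}_{V^{2}}^{2}$.

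For Part (3), I would apply the transference principle. The estimate \eqref{eq:Bourgain Strichartz} says the free propagator sends $H^{\s}$ to $L_{t,x}^{p}(I\times\T^{d})$. Since $Y^{0}$ embeds into the adapted atomic space $U_{\Delta}^{p}$ (via the standard embedding $V_{\rc}^{2}\hook U^{p}$ for $p>2$), whose elements are countable superpositions of free evolutions, the linear Strichartz estimate lifts to arbitrary $u\in Y^{0}$. Applied to $P_{\le N}u$, whose $H^{\s}$-norm is controlled by $N^{\s}\norm{P_{\le N}u}_{L_{x}^{2}}\les N^{\s}\norm{P_{\le N}u}_{Y^{0}}$, this yields the stated bound; the sharp cutoff $\chi_{I}$ is absorbed by restricting the $L^{p}$-norm to the finite interval $I$.

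Part (4) is the standard Duhamel duality estimate in the $U^{p}$/$V^{p}$-framework. By duality, $\norm{K^{+}f}_{Y^{s}}\le\sup_{\norm g_{Y^{-s}}\le 1}|\jp{g,K^{+}f}_{L^{2}(\R\times\T^{d})}|$; expanding the Duhamel integral \eqref{eq:K+} and integrating by parts in time moves the Schr\"odinger operator onto $g\in Y^{-s}$, reducing the pairing to one of the form $\jp{\tilde g,f}_{L^{2}}$ with $\norm{\tilde g}_{Y^{-s}}\les\norm g_{Y^{-s}}$, and hence bounded by $\norm f_{(Y^{-s})'}$ by definition of the dual norm. I expect the main obstacle to lie in Part (3): the careful verification that the transference step preserves the correct $N^{\s}$-weight when paired with the cube projection $P_{\le N}$, so that the resulting estimate is truly scale-invariant. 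Once the abstract $V^{2}$/$U^{p}$-machinery of Koch--Tataru is in place, however, the entire proposition is a routine compilation.
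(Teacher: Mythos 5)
Your treatments of parts (1)--(3) are correct and essentially match the paper's sketched reasoning: (1) and (2) are bookkeeping from the definition of $V^2_{\rc}$, and (3) is the usual transference argument lifting the dyadic linear Strichartz bound \eqref{eq:Bourgain Strichartz} to $Y^0$ via the chain $Y^0\hook V_\Delta^2 L^2_x\hook U_\Delta^p L^2_x$ and the atomic structure. These are exactly what the paper delegates to \cite{hadac2009well,killip2016scale}.

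Part (4), however, contains a genuine gap. Your opening step asserts
\[
\norm{K^+f}_{Y^s}\le\sup_{\norm g_{Y^{-s}}\le1}\bigl|\langle g,K^+f\rangle_{L^2(\R\times\T^d)}\bigr|,
\]
which is precisely the claim that $Y^s\hook(Y^{-s})'$ under the $L^2(\R\times\T^d)$-pairing. This is not justified and is, in fact, not how the duality of $U^p$/$V^p$-spaces works: $Y^s$ is a $V^2$-based space, and the dual of the $V^2$-based space $Y^{-s}$ (with respect to the $L^2$-pairing) is \emph{not} again a $V^2$-based space but a larger one; testing a $V^2$-norm against $V^2$-functions does not reproduce it. (Concretely, there is even a well-definedness issue: the $L^2_t$-pairing of two $V^2$-functions need not converge absolutely, and for $v=e^{it\Delta}\phi$ the functional $g\mapsto\langle g,v\rangle_{L^2(\R\times\T^d)}$ need not be controlled by $\norm g_{Y^{-s}}$.) Moreover, the subsequent claim that an integration by parts produces $\tilde g$ with $\norm{\tilde g}_{Y^{-s}}\les\norm g_{Y^{-s}}$ amounts to asserting that the (time-reversed) Duhamel operator is bounded on the $V^2$-based space $Y^{-s}$, which is not a standard fact and is not what the $U^2$--$V^2$ duality delivers.

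The correct route --- and the one the paper explicitly indicates in the footnote following the proposition --- is to introduce the $U^2$-based companion space $X^s$ (from \cite{herr2011global,killip2016scale}) and prove
\[
\norm{K^+f}_{X^s}\les\norm f_{(Y^{-s})'}.
\]
Here the duality $(U^2)^*\simeq V^2$ is used, so that the $U^2$-norm of $K^+f$ is the quantity that naturally arises when pairing against $V^2$-functions. One then concludes by the embedding $X^s\hook Y^s$ (i.e.\ $U^2\hook V^2$). To repair your argument you should replace the opening duality step for $Y^s$ by the duality characterization of $X^s$, and add the final embedding $X^s\hook Y^s$; as written, the step cannot be carried out because $Y^s$ does not admit the stated duality description.
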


(1) and (2) follow directly from the definition of $Y^{s}$. For (2),
the right-continuity and the existence of a left limit follow from
those of $V_{\rc}^{2}$; see \cite[Proposition 2.4]{hadac2009well}
for the proof.

(\ref{eq:cube Strichartz}) is a consequence of (\ref{eq:Bourgain Strichartz}),
used for example in \cite{killip2016scale}. It is obtained using
the atomic structure of $Y^{0}$-norm.

(\ref{eq:U2V2}) is a version of the $U^{2}-V^{2}$ dual estimate
used in \cite{herr2011global}.\footnote{In other literature such as \cite{herr2011global}, \cite{killip2016scale},
\cite{lee2019local}, (\ref{eq:U2V2}) is obtained with the left-hand
side replaced by the $U^{2}$-based norm $X^{s}$, using the duality
of atomic spaces. Since $X^{s}\hook Y^{s}$, that implies (\ref{eq:U2V2}).}

\subsubsection*{Galilean transforms}

For $\xi\in\Z^{d}$, we denote by $I_{\xi}:\mathcal{S}'(\R\times\T^{d})\rightarrow\mathcal{S}'(\R\times\T^{d})$
the Galilean transform mapping $u:\R\times\T^{d}\rightarrow\C$ to
\begin{equation}
I_{\xi}u(t,x)=e^{ix\cdot\xi-it|\xi|^{2}}u(t,x-2t\xi).\label{eq:Galilean}
\end{equation}

\subsection{Global well-posedness of NLS on $\protect\R^{d}$}

We recall the global well-posedness results and scattering norm bounds
for NLS on $\R^{d},d\ge3$:
\begin{equation}
\begin{cases}
iu_{t}+\De u=\pm|u|^{\frac{4}{d-2}}u=:\mathcal{N}(u)\\
u(0)=u_{0}\in\dot{H}^{1}(\R^{d})
\end{cases}\tag{{\text{\text{NLS}\ensuremath{(\mathbb{\R}^{d}})}}}.\label{eq:NLS R^d}
\end{equation}
The following is a preparatory usual fact that is at the level of
local theory.
\begin{lem}
\label{lem:local theory}Let $u\in L^{\infty}\dot{H}^{1}\cap L_{t,x}^{\frac{2d+4}{d-2}}(I\times\R^{d}),I\subset\R$
be a Duhamel solution to (\ref{eq:NLS R^d}), either focusing or defocusing.
Then, $u\in C^{0}\dot{H}^{1}\cap L^{2}\dot{W}^{1,2_{1}}(I\times\R^{d})$
also holds and
\[
\norm u_{L^{2}\dot{W}^{1,2_{1}}(I\times\R^{d})}\les_{\norm u_{L^{\infty}\dot{H}^{1}\cap L_{t,x}^{\frac{2d+4}{d-2}}}}1.
\]
Here, we denoted $2_{s}=(\frac{1}{2}-\frac{s}{d})^{-1}$, with which
$L^{2_{s}}(\R^{d})$ and $\dot{H}^{s}(\R^{d})$ have the same scaling.
Furthermore, $u$ scatters both forward and backward in time, i.e.,
there exist $\phi_{\pm}\in\dot{H}^{1}(\R^{d})$ such that $\lim_{t\rightarrow\pm\infty}\norm{u(t)-e^{it\De}\phi_{\pm}}_{\dot{H}^{1}}=0$.
\end{lem}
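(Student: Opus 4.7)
The plan is to close a Strichartz bootstrap using finiteness of the scattering norm $\|u\|_{L_{t,x}^{2(d+2)/(d-2)}(I\times\R^d)}$ as the input that provides smallness on subintervals, and then to deduce continuity in $\dot H^1$ and scattering as consequences.

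First, I would set up the core Hölder estimate. Writing $a=\frac{4}{d-2}$ and using $|\na\calN(u)|\les |u|^{a}|\na u|$, a direct Hölder check gives, on any subinterval $J\subset I$,
\[
\|\na\calN(u)\|_{L_{t,x}^{2(d+2)/(d+4)}(J\times\R^{d})}\les \|u\|_{L_{t,x}^{2(d+2)/(d-2)}(J\times\R^{d})}^{a}\|\na u\|_{L_{t,x}^{2(d+2)/d}(J\times\R^{d})},
\]
since $\tfrac{a(d-2)}{2(d+2)}+\tfrac{d}{2(d+2)}=\tfrac{d+4}{2(d+2)}$. With the $\dot H^0$-admissible pair $(q,r)=(\tfrac{2(d+2)}{d},\tfrac{2(d+2)}{d})$ and its dual, the standard Strichartz estimate applied to the Duhamel formula yields, for any $t_0\in J$,
\[
\|\na u\|_{L_{t,x}^{2(d+2)/d}(J)}\les \|u(t_0)\|_{\dot H^{1}}+\|u\|_{L_{t,x}^{2(d+2)/(d-2)}(J)}^{a}\|\na u\|_{L_{t,x}^{2(d+2)/d}(J)}.
\]
Partitioning $I$ into finitely many subintervals $J_1,\ldots,J_K$ on each of which $\|u\|_{L_{t,x}^{2(d+2)/(d-2)}(J_k)}$ is small enough to absorb the last term, and iterating, produces $\na u\in L_{t,x}^{2(d+2)/d}(I\times\R^d)$ with a bound depending only on $\|u\|_{L^\infty\dot H^1\cap L_{t,x}^{2(d+2)/(d-2)}}$; the number $K$ is controlled by the scattering norm.

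Second, to upgrade to $\na u\in L^2\dot W^{1,2_1}(I\times\R^d)$, I would apply Strichartz with the $\dot H^0$-admissible pair $(2,\tfrac{2d}{d-2})$ and its dual on each $J_k$. The nonlinear contribution is bounded by a Hölder decomposition using the just-established $L_{t,x}^{2(d+2)/d}$ bound on $\na u$ together with the Sobolev embedding $\dot H^1\hook L^{2_1}$; since each $J_k$ supplies smallness of the scattering norm, the argument is parallel to the first step.

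Third, $C^0\dot H^1$ continuity follows from the Duhamel formula and the integrability of $\na\calN(u)$ in the dual Strichartz space on each $J_k$. For scattering, set
\[
\phi_{+}:=e^{-it_{0}\De}u(t_{0})-i\int_{t_{0}}^{\infty}e^{-is\De}\calN(u(s))\,ds;
\]
the integral converges in $\dot H^1$ because, by dominated convergence applied to the above bounds, $\|\na\calN(u)\|_{L^{2(d+2)/(d+4)}_{t,x}((T,\infty)\times\R^d)}\to 0$ as $T\to\infty$, and dual Strichartz converts this into convergence of the Duhamel integral in $\dot H^1$. The analogous construction at $-\infty$ gives $\phi_-$. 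The main task is bookkeeping of Hölder indices and verifying admissibility of the Strichartz pairs rather than any genuine analytic obstacle, since the assumed finiteness of the scattering norm supplies all the smallness needed to close the bootstrap.
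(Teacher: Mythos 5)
Your proposal is correct and follows the standard Strichartz bootstrap that the paper has in mind but does not carry out: the paper's proof of this lemma consists of a single citation to \cite[Theorem 1.3]{tao2005stability} with the remark that it is ``a standard consequence of conventional Strichartz estimates.'' Your argument supplies that standard consequence explicitly. The one place where you should be careful about bookkeeping is in the second step: with the factorization $|u|^{a}|\na u|$ one cannot place $u$ in $L^{\infty}L^{2_{1}}$ and $\na u$ in $L_{t,x}^{2(d+2)/d}$ and land in the dual endpoint $L^{2}_{t}L^{2d/(d+2)}_{x}$ --- the time exponents do not match. Either split $|u|^{a}=|u|^{a/2}|u|^{a/2}$ and place one factor in the scattering norm $L_{t,x}^{2(d+2)/(d-2)}$ and the other in $L^{\infty}L^{2_{1}}$ (which makes the indices close and, as a bonus, converts step two into a direct estimate rather than a bootstrap), or, more simply, reuse the step-one bound $\na\calN(u)\in L_{t,x}^{2(d+2)/(d+4)}(J_{k})$ and invoke Strichartz with the endpoint pair on the left and that non-endpoint dual pair on the right, which is legitimate since Strichartz is valid with independent admissible pairs on the two sides. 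Either fix is routine and the overall structure of your argument is sound.
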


\begin{proof}
This is a standard consequence of conventional Strichartz estimates;
for a direct reference (which is fairly stronger), see \cite[Theorem 1.3]{tao2005stability}.
\end{proof}
For the defocusing case, the following global well-posedness and scattering
result is known for arbitrary dimensions $d\ge3$:
\begin{prop}[\cite{colliander2008global,ryckman2007global,visan2007defocusing}]
\label{prop:defocusing R^d}Fix a number $E_{0}>0$. Let $u_{0}\in\dot{H}^{1}(\R^{d}),d\ge3$
be initial data such that $E(u_{0})\le E_{0}$. There exists a unique
Duhamel solution $u\in C^{0}\dot{H}^{1}\cap L_{t,x}^{\frac{2d+4}{d-2}}(\R\times\R^{d})$
to (\ref{eq:NLS R^d}) with the defocusing sign, which is global and
scatters both forward and backward in time. Moreover, we have
\[
\norm u_{L_{t,x}^{\frac{2d+4}{d-2}}}\les_{E_{0}}1.
\]
\end{prop}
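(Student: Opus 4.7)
The plan is to follow the now-classical concentration-compactness and rigidity roadmap of Kenig--Merle type, as carried out in \cite{colliander2008global,ryckman2007global,visan2007defocusing} for the three relevant dimension ranges. The scheme is an induction-on-energy argument by contradiction: I would set
\[
L(E_{0}) := \sup\bigl\{\norm{u}_{L_{t,x}^{\frac{2d+4}{d-2}}(\R\times\R^{d})} : u \text{ a Duhamel solution with } E(u) \le E_{0}\bigr\}
\]
and $E_{\mathrm{crit}} := \sup\{E_{0} \ge 0 : L(E_{0}) < \infty\}$, and show $E_{\mathrm{crit}} = \infty$, from which the scattering and uniform bound follow via Lemma \ref{lem:local theory}.

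First, I would set up the local/perturbative theory on $\R^{d}$. Using the standard Strichartz estimates together with the contraction mapping principle, the Cauchy problem is locally well-posed in $C^{0}\dot H^{1} \cap L^{2}\dot W^{1,2_{1}} \cap L_{t,x}^{\frac{2d+4}{d-2}}$, and the maximal existence interval is characterized by blow-up of the scattering norm; small-data scattering is immediate. The second ingredient is a stability lemma: an approximate solution whose error is small in a suitable dual Strichartz norm is close to a genuine solution with nearby data. For $d \le 6$ this is straightforward since $\calN(u)=|u|^{\frac{4}{d-2}}u$ is at least Lipschitz in $\dot H^{1}$; for $d \ge 7$ one must invoke the more delicate Tao--Visan stability in \cite{tao2005stability}, which exploits exotic Strichartz pairs to compensate for the limited H\"older regularity of $\calN$.

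Assuming for contradiction $E_{\mathrm{crit}} < \infty$, take Duhamel solutions $u_{n}$ with $E(u_{n}) \to E_{\mathrm{crit}}$ whose scattering norms diverge. Applying Keraani's profile decomposition to $\{u_{n}(0)\}$ in $\dot H^{1}(\R^{d})$, one obtains an asymptotically orthogonal decomposition into bubbles indexed by scales and translations. Plugging this into the nonlinear flow and invoking the stability lemma, one shows that the critical energy is carried by a single profile, and extracts a minimal-energy nonscattering solution $u_{\min}$ whose orbit $\{u_{\min}(t)\}$ is precompact in $\dot H^{1}$ modulo the scaling and translation symmetries. A further reduction, based on the behavior of the scale parameter $N(t)$ along the orbit, splits the analysis into three enemies: a self-similar solution, a low-to-high frequency cascade, and a soliton-like solution.

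The hardest step, and the main obstacle, is the rigidity argument ruling out each enemy. For $d=3,4$, one combines precompactness with an interaction Morawetz estimate, which forces the spacetime $L^{\frac{2(d+1)}{d-1}}$-density of $u_{\min}$ to vanish and hence $u_{\min} \equiv 0$. For $d \ge 5$, the enemy $u_{\min}$ need not lie in $L^{2}$, so a direct interaction Morawetz is unavailable; instead, one propagates additional regularity by a double-Duhamel argument exploiting the compactness of the orbit, and then applies Visan's frequency-localized Morawetz inequality from \cite{visan2007defocusing} at the concentration scale to contradict the nontriviality of $u_{\min}$. This is precisely the step where the non-algebraic structure of $\calN$ for $d\ge 5$ creates the technical difficulties that motivate the alternative, non-perturbative strategy developed later in the present paper on $\T^{d}$.
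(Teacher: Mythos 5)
The paper does not prove this proposition; it simply cites \cite{colliander2008global,ryckman2007global,visan2007defocusing} and moves on, treating the $\R^{d}$ global well-posedness as a black box. Your sketch is therefore best read as a summary of the literature rather than as a reconstruction of anything in this paper, and as such its mathematical content is sound: the concentration-compactness/rigidity roadmap you outline (local theory plus stability, induction on energy, profile decomposition, extraction of a minimal nonscattering solution almost periodic modulo symmetries, the three enemies, double-Duhamel regularity and frequency-localized Morawetz for $d\ge5$, interaction Morawetz for $d=3,4$) does give a complete proof of Proposition \ref{prop:defocusing R^d}, and you correctly flag that for $d\ge7$ the stability step requires the exotic-Strichartz machinery of \cite{tao2005stability}, matching the paper's own footnote.

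One correction is worth making explicit, because it is a genuine misattribution. The roadmap you describe is \emph{not} what the three cited references actually do. Colliander--Keel--Staffilani--Takaoka--Tao, Ryckman--Visan, and Visan all predate or are independent of the Kenig--Merle profile-decomposition framework for the defocusing problem; they argue via Bourgain's ``induction on energy'' scheme, combining a frequency-localized interaction Morawetz estimate with an intricate ``tower''/``reverse Sobolev'' bookkeeping of how energy moves between frequency scales, and there is no extraction of a minimal almost-periodic counterexample in those papers. The concentration-compactness treatment of the \emph{defocusing} energy-critical NLS that you describe (minimal blow-up solution, $N(t)$, three enemies, double Duhamel) is due to later expositions and refinements, notably the Killip--Visan lecture notes and \cite{killip2010focusing}. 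So your proof is a correct \emph{alternative} route to the same theorem, but it should not be presented as a reconstruction of the proofs in \cite{colliander2008global,ryckman2007global,visan2007defocusing}. This distinction actually matters for the present paper's narrative: the Appendix reproves the Palais--Smale reduction (Claim \ref{claim:Palais Smale}) precisely because the concentration-compactness package you invoke---Lipschitz/H\"older stability, profile decomposition, multilinear orthogonality---is what fails on $\T^{d}$ for $d\ge5$, and the paper's whole point is to replace that package, not to reuse it.
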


\begin{proof}
This is shown in \cite{colliander2008global,ryckman2007global,visan2007defocusing}.
\end{proof}
A similar result for the focusing case is known for dimensions $d\ge4$:
\begin{prop}[\cite{killip2010focusing,dodson2019global}]
\label{prop:focusing R^d}Let $u_{0}\in\dot{H}^{1}(\R^{d}),d\ge4$
be initial data such that
\[
\norm{\na u_{0}}_{L^{2}}\le K<\norm{\na W}_{L^{2}}
\]
and
\[
E(u_{0})<E(W).
\]
Then, there exists a unique Duhamel solution $u\in C^{0}\dot{H}^{1}\cap L_{t,x}^{\frac{2d+4}{d-2}}(\R\times\R^{d})$
to (\ref{eq:NLS R^d}) with the focusing sign, which is global and
scatters both forward and backward in time. Moreover, we have
\[
\norm u_{L_{t,x}^{\frac{2d+4}{d-2}}}\les_{K}1.
\]
\end{prop}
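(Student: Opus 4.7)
The plan is to follow the Kenig--Merle concentration--compactness / rigidity roadmap, which is the standard route to focusing energy-critical GWP under a sub-threshold hypothesis.

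First I would establish \emph{energy trapping}. The Aubin--Talenti sharp Sobolev inequality characterizes $W$ as the extremizer of $\|f\|_{L^{2_1}}/\|\nabla f\|_{L^2}$, so $E(W)=\tfrac{1}{d}\|\nabla W\|_{L^2}^2$ and one has a variational bound of the schematic form
\[
E(f)\ge \tfrac{1}{2}\|\nabla f\|_{L^2}^{2}\,G\!\left(\|\nabla f\|_{L^2}/\|\nabla W\|_{L^2}\right),
\]
where $G$ is positive on $[0,1)$ and vanishes at $1$. Combined with energy conservation, $\|\nabla u_0\|_{L^2}<\|\nabla W\|_{L^2}$, and continuity of $\|\nabla u(t)\|_{L^2}$ on the maximal lifespan, a continuity argument confines $\|\nabla u(t)\|_{L^2}\le K'<\|\nabla W\|_{L^2}$ throughout the lifespan and gives coercivity $E(u(t))\gtrsim_K \|\nabla u(t)\|_{L^2}^2$. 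In particular the focusing equation behaves, at the level of a priori $\dot H^1$ bounds, like the defocusing one.

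Next I would extract a \emph{critical element} by contradiction. Lemma~\ref{lem:local theory} and standard small-data theory give global Duhamel solutions with bounded scattering norm for small $\dot H^1$ data. Let $L^*$ be the supremum of $L<E(W)$ such that every sub-threshold datum with $E(u_0)<L$ produces a global solution with bounded $L_{t,x}^{(2d+4)/(d-2)}$ norm. Assuming $L^*<E(W)$, apply a Bahouri--G\'erard/Keraani linear profile decomposition to a minimizing sequence $\{u_{0,n}\}$ in $\dot H^1$, then promote it to a nonlinear profile decomposition using the stability theory of \cite{tao2005stability} (which handles general $d\ge 3$, at the price of the exotic Strichartz estimates for $d\ge 7$ mentioned in the footnote). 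Orthogonality of the $L^{2_1}$-norms of the profiles combined with the sub-threshold hypothesis isolates a single responsible profile, yielding a minimal counterexample $u_c$ whose orbit $\{u_c(t)\}$ is precompact in $\dot H^1$ modulo the dilation/translation symmetries of (\ref{eq:NLS R^d}): there exist $N(t)>0$ and $x(t)\in\R^d$ so that $\{N(t)^{-(d-2)/2}u_c(t,N(t)^{-1}\cdot+x(t))\}$ is precompact.

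Then I would run the \emph{rigidity step}: the compact critical element must fall into one of the Kenig--Merle enemies --- a soliton-like solution ($N(t)\sim 1$), a self-similar/finite-time blow-up ($N(t)\to\infty$ in finite time), or a low-to-high cascade. The finite-time and cascade scenarios are killed by an additional-regularity argument showing $u_c\in L^2$ and then invoking conservation of mass (which forces $u_c\equiv 0$, contradicting the failure of scattering). The soliton case uses a truncated virial/Morawetz identity: the coercivity from the energy trapping step makes the bracketed quantity in the virial strictly sign-definite on the (compact, modulo symmetry) support of $u_c$, forcing $\int|\nabla u_c(t)|^2$ to strictly decrease, which is incompatible with compactness of the orbit.

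The main obstacle is the soliton case when $d=4$: the standard truncated virial is not by itself strong enough to close, because $d=4$ sits at a critical interface where frequency cascades interact badly with the virial localization. That step requires the long-time Strichartz / interaction Morawetz machinery of \cite{dodson2019global}. For $d\ge 5$ the cleaner argument of \cite{killip2010focusing} suffices. A second, more mechanical obstacle is that the profile decomposition/nonlinear stability input is genuinely nontrivial for $d\ge 7$, where the flow map is only H\"older continuous --- here the exotic Strichartz estimates of \cite{tao2005stability} are the essential new ingredient, which is precisely why the present paper seeks to avoid stability-theory-based arguments on $\T^d$.
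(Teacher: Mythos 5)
The paper offers no proof of Proposition~\ref{prop:focusing R^d}; it is stated as a black-box citation of \cite{killip2010focusing,dodson2019global}, in exactly the same way the defocusing analogue Proposition~\ref{prop:defocusing R^d} is simply attributed to the literature. Your sketch is a faithful reconstruction of the Kenig--Merle concentration-compactness/rigidity roadmap that those cited works actually execute --- energy trapping via Aubin--Talenti, a minimal counterexample via linear profile decomposition upgraded to a nonlinear one through the stability theory of \cite{tao2005stability}, and a rigidity step killing the self-similar/cascade scenarios by an additional-regularity/mass argument and the soliton scenario by a truncated virial made coercive by the sub-threshold trapping --- and you correctly flag that $d=4$ is the hard endpoint requiring Dodson's long-time Strichartz machinery while $d\ge 7$ requires the exotic Strichartz input behind the merely H\"older-continuous flow map, which is precisely the footnote's point and the reason the present paper avoids stability-based arguments on $\T^d$.
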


Here, we denote by $W$ the ground state of (\ref{eq:NLS R^d}) on
$\R^{d}$, explicitly given in (\ref{eq:W}).

\subsection{Energy trapping of energy-critical focusing NLS on $\protect\T^{d}$}

Yue \cite{YUE2021754} used a variational structure to introduce modified
energies, which give sufficient conditions for energy trappings on
$\T^{d}$. While \cite{YUE2021754} only dealt four-dimensional cases,
the energy trapping works on general torus $\T^{d},d\ge3$, hence
we state the result on general $\T^{d}$.

We denote by $c_{*}$ and $C_{d}$ the constants given in \cite[Lemma 2.1]{YUE2021754},
which depend only on the domain $\T^{d}$. Explicitly, we define $C_{d}:=\norm W_{L^{2+a}}/\norm{\na W}_{L^{2}}$
and choose $c_{*}$ as a positive constant such that for $u\in H^{1}(\T^{d})$,
\[
\norm u_{L^{2+a}(\T^{d})}^{2}\le C_{d}^{2}\left(\norm{\na u}_{L^{2}(\T^{d})}^{2}+c_{*}\norm u_{L^{2}(\T^{d})}^{2}\right).
\]
We define $c_{0}$ as the infimum of all such available $c_{*}$.

For $u\in H^{1}(\T^{d})$ and $c>0$, we define the modified energy
$E_{c}(u)$ as 

\[
E_{c}(u)=E(u)+\frac{c}{2}\norm u_{L^{2}}^{2},
\]
which is conserved by the mass and energy conservation laws.
\begin{rem}
Although the condition of Theorem \ref{thm:focusing GWP} contains
$c_{0}$, the inequality conditions are strict and thus $c_{0}$ can
be replaced by some $c_{*}>c_{0}$.
\end{rem}

We recall the following energy trapping principle:
\begin{prop}[{Energy trapping, \cite[Theorem 2.5]{YUE2021754}}]
\label{prop:energy trapping}Let $u_{0}\in H^{1}(\T^{d}),d\ge3$
be such that for some $\delta_{0}>0$ and $c_{*}>c_{0}$,
\[
\norm{\na u_{0}}_{L^{2}}^{2}+c_{*}\norm{u_{0}}_{L^{2}}^{2}<\norm{\na W}_{L^{2}}^{2}
\]
and
\[
E_{c_{*}}(u_{0})<(1-\delta_{0})E(W).
\]
Let $u$ be a maximum lifespan solution to (\ref{eq:NLS T^d}) with
the initial data $u_{0}$ and the lifespan $I\ni0$. There exists
$\bar{\delta}=\bar{\delta}(\delta_{0})>0$ such that for all $t\in I$,
\[
\norm{\na u(t)}_{L^{2}}^{2}<(1-\bar{\delta})\norm{\na W}_{L^{2}}^{2}.
\]
\end{prop}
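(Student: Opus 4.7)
The plan is to carry out a standard variational trapping argument, as in Kenig--Merle \cite{kenig2006global}, with the $\R^{d}$ sharp Sobolev inequality replaced by the shifted embedding defining $c_{*}$. Put $y(t):=\norm{\nabla u(t)}_{L^{2}}^{2}+c_{*}\norm{u(t)}_{L^{2}}^{2}$, which is continuous on $I$ because $u\in C^{0}H^{1}(I\times\T^{d})$.

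First I would record the Pohozaev identities for the ground state: pairing $-\Delta W=W^{1+a}$ with $W$ gives $\norm{\nabla W}_{L^{2}}^{2}=\norm{W}_{L^{2+a}}^{2+a}$, and hence $E(W)=\tfrac{1}{d}\norm{\nabla W}_{L^{2}}^{2}$. Combined with $C_{d}=\norm{W}_{L^{2+a}}/\norm{\nabla W}_{L^{2}}$, these imply that the one-variable function
\begin{equation*}
f(y):=\frac{1}{2}y-\frac{C_{d}^{2+a}}{2+a}y^{(2+a)/2}
\end{equation*}
is strictly increasing on $[0,y^{*}]$ and strictly decreasing on $[y^{*},\infty)$, attaining its unique maximum $f(y^{*})=E(W)$ at $y^{*}:=\norm{\nabla W}_{L^{2}}^{2}$. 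Raising the defining inequality of $c_{*}$ to the $(2+a)/2$ power and substituting into $E_{c_{*}}$ yields, for every $v\in H^{1}(\T^{d})$,
\begin{equation*}
E_{c_{*}}(v)\ge f\bb{\norm{\nabla v}_{L^{2}}^{2}+c_{*}\norm{v}_{L^{2}}^{2}}.
\end{equation*}

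Applied at $v=u(t)$, together with mass and energy conservation and the hypothesis $E_{c_{*}}(u_{0})<(1-\delta_{0})E(W)$, this gives $f(y(t))<(1-\delta_{0})E(W)$ for all $t\in I$. Let $y_{-}\in[0,y^{*})$ be the unique solution of $f(y_{-})=(1-\delta_{0})E(W)$ on that interval, and set $\bar{\delta}:=1-y_{-}/y^{*}>0$, a quantity depending only on $\delta_{0}$.

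A continuity/bootstrap argument then closes the proof. The hypothesis $y(0)<y^{*}$ together with $f(y(0))<f(y_{-})$ and the strict monotonicity of $f$ on $[0,y^{*}]$ gives $y(0)<y_{-}$. If $y(t_{1})\ge y^{*}$ for some $t_{1}\in I$, continuity of $y(\cdot)$ produces an intermediate $t_{0}$ with $y(t_{0})=y^{*}$, contradicting $f(y(t_{0}))<(1-\delta_{0})E(W)<E(W)=f(y^{*})$. Hence $y(t)<y^{*}$, and then the same monotonicity forces $y(t)<y_{-}$ throughout $I$. Since $\norm{\nabla u(t)}_{L^{2}}^{2}\le y(t)$, this yields the claim. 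No serious obstacle arises: this is essentially the argument of \cite[Theorem 2.5]{YUE2021754}, whose only dimension-specific ingredient --- the Pohozaev identity for $W$ --- is valid for all $d\ge3$, so the extension to general dimensions is automatic.
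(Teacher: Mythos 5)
Your proof is correct and is essentially the standard variational trapping argument of Kenig--Merle as adapted by Yue to the torus; the paper does not reprove the proposition but cites \cite[Theorem 2.5]{YUE2021754}, and your argument reproduces that proof (Pohozaev identities for $W$, the one-variable convex/concave auxiliary function $f$, lower bound $E_{c_*}(v)\ge f(\norm{\nabla v}_{L^2}^2+c_*\norm{v}_{L^2}^2)$, then a continuity/bootstrap barrier at $y^*=\norm{\nabla W}_{L^2}^2$), correctly noting that the only dimension-dependent input is the Pohozaev computation, which holds for all $d\ge3$.
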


\section{Frames and cutoff solutions\label{sec:Basic-properties-of}}

\subsection{Periodic extensions and frames}

The proofs of the main results capture limit profiles from concentrating
bubbles by taking weak limits of their rescales. The limit profiles
lie on $\R\times\R^{d}$; hence, it is convenient to first extend
a function on $\R\times\T^{d}$ to that on $\R\times\R^{d}$ and then
rescale the coordinates. How the function is extended is not essential
to our roadmap; one could, e.g., use charts and bump functions for
general domains. In our setting, we extend a function on $\R\times\T^{d}$
to a spatially $2\pi\Z^{d}$-periodic function on $\R\times\R^{d}$
and rescale it, as explained below.

We define the mappings of spacetime functions. We denote a triple
$\left(N_{*},t_{*},x_{*}\right)\in2^{\N}\times\R\times\T^{d}$ of
scale, time, and space parameters. For $f\in L_{\loc}^{1}(\R\times\T^{d})$
and $\left(N_{*},t_{*},x_{*}\right)\in2^{\N}\times\R\times\T^{d}$,
we denote by $\iota_{(N_{*},t_{*},x_{*})}f$ the $L^{\infty}\dot{H}^{1}$-critically
rescaled function
\[
\iota_{(N_{*},t_{*},x_{*})}f(t,x):=N_{*}^{1-\frac{d}{2}}f\left(N_{*}^{-2}t+t_{*},N_{*}^{-1}x+x_{*}+2\pi\Z^{d}\right)\in L_{\loc}^{1}(\R\times\R^{d}).
\]
We also use the $L^{\infty}\dot{H}^{-1}$-critical rescale $\iota'_{(N_{*},t_{*},x_{*})}=N_{*}^{-2}\iota_{(N_{*},t_{*},x_{*})}$.
\begin{defn}
\label{def:frame}A sequence of triples of parameters $\left\{ \left(N_{n},t_{n},x_{n}\right)\right\} _{n\in\N}$
in $2^{\N}\times\R\times\T^{d}$ is said to be a \emph{frame} if $\lim_{n\rightarrow\infty}N_{n}=\infty$.
\end{defn}

We frequently work on distributional weak limits of $\iota_{\mc O_{n}}f_{n}$
for a sequence of functions $f_{n}:\R\times\T^{d}\rightarrow\C$.
\begin{defn}
Let $q,r\in[1,\infty]$ and $k\in\Z$. A sequence of functions $\left\{ f_{n}\right\} $
in $L_{\loc}^{1}(\R\times\R^{d})$ is said to be \emph{uniformly locally
bounded }in $L^{q}\dot{W}^{k,r}$ if
\[
\sup_{R\in2^{\N}}\limsup_{n\rightarrow\infty}\norm{f_{n}}_{L^{q}\dot{W}^{k,r}\left(\R\times B_{R}(0)\right)}<\infty,
\]
where $B_{R}(0):=\left\{ x\in\R^{d}:|x|\le R\right\} $.
\end{defn}

For instance, for any frame $\left\{ \mc O_{n}\right\} $ and any
bounded sequence of functions $\left\{ f_{n}\right\} $ in $L^{q}W^{k,r}(\R\times\T^{d})$,
if $L^{q}\dot{W}^{k,r}$ and $L^{\infty}\dot{H}^{1}$ have the same
scaling, since the support of $\iota_{\mc O_{n}}(\chi_{\R\times[-\pi,\pi)^{d}})$
grows to $\R\times\R^{d}$ as $N_{n}\rightarrow\infty$, $\left\{ \iota_{\mc O_{n}}f_{n}\right\} $
is uniformly locally bounded in $L^{q}\dot{W}^{k,r}$.

\subsection{Cutoff solutions}

In this work, to consider a solution $u$ locally within a short time
interval $I\subset\R$, it is often convenient to consider an extension
of $u\mid_{I}$ by linear evolutions on $\R\setminus I$. For this,
we introduce the concept of a cutoff solution.

A pair $(u,I)$, where $u\in C^{0}\mc S'\cap L_{\loc}^{1+a}(\R\times\R^{d})$
and $I\subset\R$ is an interval, is a \emph{cutoff solution }to (\ref{eq:NLS R^d})
if $u$ is a Duhamel solution to
\[
iu_{t}+\De u=\chi_{I}\cdot\mc N(u).
\]
Here, $I$ is possibly empty or $\R$ itself (i.e., both linear evolutions
and purely nonlinear solutions are cutoff solutions). A cutoff solution
on $\T^{d}$ is defined similarly. Equivalently, $u$ can be regarded
as the continuous extension of $u\mid_{I}$ by linear evolutions.

The following lemma enables one to regard a weak limit of periodic
extensions $\iota_{\mc O_{n}}$ of cutoff solutions on $\T^{d}$ as
a strong (cutoff) solution on $\R^{d}$.
\begin{lem}
\label{lem:weak lim is sol}Let $\left\{ (v_{n},I_{n})\right\} $
be a sequence of cutoff solutions to (\ref{eq:NLS R^d}) uniformly
locally bounded in $C^{0}\dot{H}^{1}\cap L_{t,x}^{\frac{2d+4}{d-2}}(\R\times\R^{d})$.
Passing to a subsequence, $v_{n}$ converges weakly and almost everywhere
to a cutoff solution $(v_{*},I_{*})$, $v_{*}\in C^{0}\dot{H}^{1}\cap L_{t,x}^{\frac{2d+4}{d-2}}(\R\times\R^{d})$
to (\ref{eq:NLS R^d}), which scatters both forward and backward in
time. Furthermore, for every $T\in\R$, we have $v_{n}(T)\weak v_{*}(T)$.
\end{lem}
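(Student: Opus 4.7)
The plan is, by combining Banach--Alaoglu, Aubin--Lions, and the equation itself, to extract a subsequence along which $v_n$ converges weakly and almost everywhere to some $v_*$; to identify $v_*$ as a distributional cutoff solution by passing to the limit in the nonlinearity; and finally to upgrade regularity and deduce scattering via Lemma \ref{lem:local theory}.

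\emph{Subsequence and a.e.\ convergence.} Writing $I_n=[a_n,b_n]$, pass to a subsequence so that $a_n\to a_*$ and $b_n\to b_*$ in $[-\infty,\infty]$, and set $I_*:=[a_*,b_*]\cap\R$. By Banach--Alaoglu and a diagonal argument over dyadic radii and finite time intervals, extract a further subsequence with $v_n\weak v_*$ in $L^{\frac{2d+4}{d-2}}_{t,x}(J\times B_R)$ for every finite $J\subset\R$ and every $R\in 2^{\N}$. Since $|\mc N(v_n)|=|v_n|^{\frac{d+2}{d-2}}$, the uniform local $L^{\frac{2d+4}{d-2}}_{t,x}$ bound on $v_n$ yields a uniform local $L^2_{t,x}$ bound on $\mc N(v_n)$, and the equation $i\d_t v_n=-\De v_n+\chi_{I_n}\mc N(v_n)$ then bounds $\d_t v_n$ uniformly locally in $L^\infty \dot H^{-1}+L^2 H^{-1}$. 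Combined with the uniform local $L^\infty \dot H^1$ bound on $v_n$ and the compact embedding $H^1(B_R)\Subset L^2(B_R)$, Aubin--Lions delivers local compactness of $\{v_n\}$ in $L^2_{t,x,\loc}$, and a further diagonal extraction produces $v_n\to v_*$ almost everywhere on $\R\times\R^d$.

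\emph{Limit equation, regularity, pointwise convergence.} Pointwise convergence gives $\mc N(v_n)\to\mc N(v_*)$ a.e., and uniform $L^2_{t,x,\loc}$ boundedness upgrades this to local weak convergence; since $\chi_{I_n}\to\chi_{I_*}$ a.e., one obtains $\chi_{I_n}\mc N(v_n)\weak\chi_{I_*}\mc N(v_*)$ locally, so passing the equation to the limit distributionally shows $(v_*,I_*)$ is a cutoff solution. Weak lower semicontinuity gives $v_*\in L^\infty \dot H^1\cap L^{\frac{2d+4}{d-2}}_{t,x}$, and Lemma \ref{lem:local theory}, applied on $I_*$ and combined with the free evolution off $I_*$, supplies the continuous $\dot H^1$-valued representative and both-directional scattering. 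For the pointwise weak convergence, the uniform bound on $\d_t v_n$ makes $\{v_n\}$ equicontinuous as maps $\R\to H^{-1}(B_R)$ for each $R$; combined with the $\dot H^1$ bound, Arzelà--Ascoli in the weak topology and a further diagonalization yield a subsequence with $v_n(T)\weak v_*(T)$ in $\dot H^1$ for every $T$, the limit being forced to coincide with $v_*(T)$ by uniqueness of the spacetime weak limit.

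The central difficulty is the a.e.\ passage to the limit in the nonlinearity: Strichartz-type bounds alone give only weak convergence of $\mc N(v_n)$ to \emph{some} limit, which need not equal $\mc N(v_*)$. Resolving this requires the equation to supply a time-derivative bound, so that Aubin--Lions can convert the $L^\infty \dot H^1_\loc$ bound into the local strong compactness that underlies a.e.\ convergence.
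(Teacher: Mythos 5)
Your argument follows the same overall skeleton as the paper: extract an a.e.\ convergent subsequence by a compactness argument driven by the time-derivative bound from the equation, pass to the limit in the distributional formulation using a.e.\ convergence plus uniform local $L^2$ bounds on the nonlinearity, and then invoke Lemma~\ref{lem:local theory} for the $C^0\dot H^1$ regularity and scattering. Your compactness step (Aubin--Lions with $\d_t v_n$ bounded in $L^\infty\dot H^{-1}+L^2 H^{-1}_{\loc}$) is a legitimate variant of the paper's, which instead bounds $\mc N(v_n)$ in $C^0 H^{-1}_{\loc}$ using the $C^0\dot H^1$ bound and interpolates to get $\{v_n\}$ bounded in $C^{0,1/2}L^2_{\loc}$; both deliver local strong $L^2$ compactness and a.e.\ convergence, and your equicontinuity argument for $v_n(T)\weak v_*(T)$ is likewise fine.

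There is, however, one genuine step that you skip. You only show that $v_*$ satisfies the equation $(i\d_t+\De)v_*=\chi_{I_*}\mc N(v_*)$ in the sense of distributions, and then apply Lemma~\ref{lem:local theory} to deduce regularity and scattering. But Lemma~\ref{lem:local theory} is stated for \emph{Duhamel} solutions, and in this paper ``cutoff solution'' is defined in the Duhamel sense as well: the conclusion of the lemma you are proving requires $v_*$ to satisfy
\[
v_*(t)=e^{it\De}v_*(0)-i\int_0^t e^{i(t-s)\De}\bigl(\chi_{I_*}\mc N(v_*)\bigr)(s)\,ds,
\]
which does not follow from the distributional identity without an argument. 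The paper closes this gap by mollifying: convolving the distributional equation with $\phi_n(t,x)=n^{d+1}\phi(nt,nx)$, writing the (exact) Duhamel formula for the mollified function, and passing to the limit using $v_*\in C^0 L^2_{\loc}$ (so $(v_**\phi_n)(0)\weak v_*(0)$) and $\chi_{I_*}\mc N(v_*)\in L^2_{t,x}$. You have all the ingredients needed to run this (your Aubin--Lions step even gives $v_*\in C^0 L^2_{\loc}$), but as written the jump from ``distributional solution'' to ``input for Lemma~\ref{lem:local theory}'' is unjustified and should be filled in.
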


\begin{proof}
Since $i\d_{t}v_{n}=-\De v_{n}+\mc N(v_{n})$ is bounded in $C^{0}H_{\loc}^{-1}$,
\begin{equation}
\left\{ v_{n}\right\} \text{ is bounded in }C^{0}H_{\loc}^{1}\cap\dot{W}^{1,\infty}H_{\loc}^{-1}\hook C^{0}H_{\loc}^{1}\cap C^{0,1/2}L_{\loc}^{2}.\label{eq:sup vn}
\end{equation}
Thus, passing to a subsequence, $v_{n}$ converges almost everywhere;
let $v_{*}$ be the pointwise limit. By the uniform local boundedness
of $\left\{ v_{n}\right\} $ in $C^{0}\dot{H}^{1}\cap L_{t,x}^{\frac{2d+4}{d-2}}$
and (\ref{eq:sup vn}), we have
\begin{equation}
v_{n}\weak v_{*}\in L^{\infty}\dot{H}^{1}\cap L_{t,x}^{\frac{2d+4}{d-2}}\cap C^{0,1/2}L_{\loc}^{2}.\label{eq:v*}
\end{equation}
By (\ref{eq:sup vn}), $\left\{ v_{n}\right\} $ is equicontinuous
in $L_{\loc}^{2}(\R^{d})$, thus $v_{n}(T)\weak v_{*}(T)$ further
holds for every $T\in\R$.

It remains to show that $v_{*}$ is a cutoff solution. We proceed
conventionally by first showing that $v_{*}$ is a cutoff solution
in the distributional sense, then using a mollification argument to
show that such $v_{*}$ is also a solution in the Duhamel sense.

Firstly, we show that $v_{*}$ is a distributional cutoff solution.
Passing to a subsequence, there exists an interval $I_{*}$ such that
$\chi_{I_{n}}$ converges almost everywhere to $\chi_{I_{*}}$. Since
$v_{n}$ is bounded in $L_{\loc}^{\frac{2d+4}{d-2}}(\R\times\R^{d})$
and converges almost everywhere to $v_{*}$, $\mc N(v_{n})$ is bounded
in $L_{\loc}^{2}(\R\times\R^{d})$ and converges almost everywhere
to $\mc N(v_{*})$. Thus, taking weak limits of both sides of (\ref{eq:NLS R^d})
gives
\begin{equation}
(i\d_{t}+\De)v_{*}=\chi_{I_{*}}\cdot\mc N(v_{*}).\label{eq:dist}
\end{equation}
We check that $v_{*}$ is a Duhamel cutoff solution. We use a mollification
argument. Let $\phi\in C_{0}^{\infty}(\R\times\R^{d})$ be a function
such that $\int_{\R\times\R^{d}}\phi dxdt=1$. For $n\in\N$, let
$\phi_{n}(t,x):=n^{d+1}\phi(nt,nx)$. Taking a convolution with $\phi_{n}$
to (\ref{eq:dist}), we have
\[
(i\d_{t}+\De)v_{*}*\phi_{n}=\left(\chi_{I_{*}}\cdot\mc N(v_{*})\right)*\phi_{n},
\]
which can be rewritten in the Duhamel form
\begin{equation}
\left(v_{*}*\phi_{n}\right)(t)=e^{it\De}\left(v_{*}*\phi_{n}\right)(0)-i\int_{0}^{t}e^{i(t-s)\De}\left(\left(\chi_{I}\cdot\mc N(v_{*})\right)*\phi_{n}\right)(s)ds.\label{eq:weak Duhamel}
\end{equation}
Since $v_{*}\in C^{0}L_{\loc}^{2}$, we have $\left(v_{*}*\phi_{n}\right)(0)\weak v_{*}(0)$.
Since $v_{*}\in L_{t,x}^{\frac{2d+4}{d-2}}(\R\times\R^{d})$, we have
$\chi_{I_{*}}\cdot\mc N(v_{*})\in L_{t,x}^{2}(\R\times\R^{d})$. Thus,
taking a weak limit by $n\rightarrow\infty$ of (\ref{eq:weak Duhamel})
yields
\[
v_{*}=e^{it\De}v_{*}(0)-i\int_{0}^{t}e^{i(t-s)\De}\left(\chi_{I}\cdot\mc N(v_{*})\right)(s)ds,
\]
which implies that $(v_{*},I_{*})$ is a Duhamel cutoff solution.
By Lemma \ref{lem:local theory}, $v_{*}\in C^{0}\dot{H}^{1}$ further
holds, hence finishing the proof.
\end{proof}

\section{A solution space and an inverse embedding property\label{sec:Solution-spaces-and}}

In this section, we define and investigate a solution space $Z^{s}=Z_{\s}^{s}$
modifying that of the local well-posedness result in \cite{LWP(working)}.
We bring parameters $\s$ and $p$ given in \cite{LWP(working)},
which depend only on $d$. As in \cite[Section 3]{LWP(working)},
we fix the parameters $\s$ and $p$ as follows:

\begin{equation}
\s=\s(d)\ll1\text{ and }p=\frac{d+2}{\frac{d}{2}-\s}.\label{eq:=00005Cs,p def}
\end{equation}
Let $\psi:\R\rightarrow[0,\infty)$ be a smooth even bump function
such that $\psi|_{[-1,1]}\equiv1$ and $\text{supp}(\psi)\subset[-\frac{11}{10},\frac{11}{10}]$.
\begin{defn}
\label{def:Z^s_theta def}For $d\in\N$, $s\in\R$, and $\theta\in[0,\s]$,
we define $Z_{\theta}^{s}=Z_{\theta}^{s}(\R\times\T^{d})$ as a Banach
space given by the norm
\begin{align}
\norm u_{Z_{\theta}^{s}} & =\max_{q\in\text{ \ensuremath{\left[p,\frac{1}{\s}\right]} }}\norm{N^{s-\s}\norm{\psi u_{N}}_{L^{q}L^{r}}}_{\ell^{2}(N\in2^{\N})}\label{eq:Z^s def}\\
 & +\max_{\al\in\left[\s,\frac{1}{p}-\s\right]}\norm{N^{s}\max_{\substack{R\in2^{\N}\\
R\le8N
}
}\left(\text{\ensuremath{\jp{N/R}^{-\theta}}}+R^{-2\theta}\right)R^{-\be}\norm{\norm{\psi P_{\le8R}I_{Rk}u_{N}}_{B_{p,1}^{\al}L^{p}}}_{\ell^{2}(k\in\Z^{d})}}_{\ell^{2}(N\in2^{\N})},\nonumber 
\end{align}
where we denote $\jp x:=\sqrt{|x|^{2}+1}$ for $x\in\R$ and the scaling
conditions $\frac{d}{r}:=\frac{d}{2}-\s-\frac{2}{q}$ and $\be:=\s+2\al$
are imposed.
\end{defn}

In \cite{LWP(working)}, the $Z_{0}^{s}$ space (originally denoted
by $Z^{s}$ in \cite{LWP(working)}) was first defined and used as
a solution norm. The $Z_{0}^{s}$ norm was initially introduced with
sharp Littlewood-Paley cutoffs, but they work the same with smooth
cutoffs, which is our setting.

In this paper, $Z_{\s}^{s}$ is mainly used. Since $Z_{\theta}^{s}$
is weaker with higher $\theta$, adopting $Z_{\theta}^{s}$ instead
of $Z_{0}^{s}$ means that we use a weaker solution norm for bootstrapping.
This is beneficial for the inverse property of the embedding $Y^{s}\hook Z_{\theta}^{s}$
(Proposition \ref{prop:inverse embedding}). Indeed, the factor $\theta>0$
is crucial; for instance, a linear combination of Galilean packets
$I_{Rk}P_{\le R}e^{it\De}\delta$, $k\in\Z^{d}$ satisfies $\norm u_{Y^{s}}\sim\norm u_{Z_{0}^{s}}$.

Although $Z_{\s}^{s}$ is weaker than $Z_{0}^{s}$ previously used
in \cite{LWP(working)}, all estimates given in \cite{LWP(working)}
stay true with $Z_{\s}^{s}$. The second line of (\ref{eq:Z^s def}),
where the decay exponent $\theta$ appears, was used in \cite{LWP(working)}
only for the bilinear Strichartz estimate \cite[(3.25)]{LWP(working)}.
That estimate stays true, as explained in the proof of Lemma \ref{lem:Nu bound}.

We collect elementary properties of $Z_{\theta}^{s}$ spaces.
\begin{prop}[{\cite[Lemma 3.6]{LWP(working)}}]
\label{lem:summary of embeds}Let $s\in\R$ and $\theta\in[0,\s]$.
We have the following properties:
\begin{itemize}
\item We have the embedding
\begin{equation}
\ell_{s}^{2}(Z_{\theta}^{0})'=(Z_{\theta}^{-s})'\hook(Y^{-s})'\stackrel{K^{+}}{\longrightarrow}Y^{s}\hook Z_{\theta}^{s}=\ell_{s}^{2}Z_{\theta}^{0}.\label{eq:Z^s-Y^s}
\end{equation}
\item For a finite interval $I\subset\R$, we have
\begin{equation}
\norm{u\cdot\chi_{I}}_{Z_{\theta}^{s}}\les\norm u_{Z_{\theta}^{s}}.\label{eq:step Z^s}
\end{equation}
This estimate is uniform on the choice of $I$.
\item Let $\al\in[\s,\frac{1}{p}-\s]$ and $\be=\s+2\al$. We have
\begin{equation}
\norm{\psi u}_{\ell_{s-\be}^{2}B_{p,1}^{\al}L^{p}}\les\norm u_{Z_{\theta}^{s}}.\label{eq:time Besov embed Z^s}
\end{equation}
\end{itemize}
\end{prop}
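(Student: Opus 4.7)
The plan is to handle the three assertions in reverse order of difficulty, leaving the embedding chain (\ref{eq:Z^s-Y^s}) as the core.

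For (\ref{eq:time Besov embed Z^s}), I would read it directly off Definition \ref{def:Z^s_theta def}. Picking, for each dyadic $N$, any $R \in 2^{\N}$ with $N/2 \le R \le 8N$ makes $P_{\le 8R}$ act as the identity on $u_N$; isolating the $k = 0$ term of the $\ell^2(\Z^d)$ sum (where $I_{0} = \mathrm{id}$) and noting $\jp{N/R}^{-\theta} + R^{-2\theta} \gtrsim 1$ gives
\[
N^{s - \be} \norm{\psi u_N}_{B_{p,1}^{\al} L^p} \les \norm{u}_{Z_\theta^s}
\]
for each $N$, after which squaring and summing in $N$ produces (\ref{eq:time Besov embed Z^s}).

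For (\ref{eq:step Z^s}), the $L^q L^r$ contribution to (\ref{eq:Z^s def}) is a pointwise-in-time contraction under multiplication by $\chi_I$, so the bound is immediate. For the Besov-in-time contribution, I would use that $\chi_I$ is a pointwise multiplier on $B_{p,1}^{\al}(\R)$, uniformly in $I$, whenever $\al < 1/p$; since $\al \in [\s, 1/p - \s] \subset (0, 1/p)$, this is standard.

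The bulk of the work is (\ref{eq:Z^s-Y^s}). The outer identifications $Z_\theta^s = \ell_s^2 Z_\theta^0$ and $(Z_\theta^{-s})' = \ell_s^2(Z_\theta^0)'$ follow from the definition by absorbing the weights $N^{s-\s}$ and $N^s$ into the $\ell_s^2$ summation. The middle arrow is exactly (\ref{eq:U2V2}). The dual inclusion $(Z_\theta^{-s})' \hook (Y^{-s})'$ follows from $Y^{-s} \hook Z_\theta^{-s}$ by duality, so everything reduces to $Y^s \hook Z_\theta^s$. I would prove this dyadically using $Y^s = \ell^2 Y^s$. For the first line of (\ref{eq:Z^s def}), the scale-invariant Strichartz (\ref{eq:cube Strichartz}) gives $\norm{\psi u_N}_{L^p L^r} \les N^{\s} \norm{u_N}_{Y^0}$, and interpolation with the trivial $L^{\infty} L^2$ bound covers $q \in [p, 1/\s]$ with the correct $N^{s-\s}$ weight. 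For the second line, I exploit that the Galilean transform $I_{Rk}$ commutes with the free flow and preserves $Y^0$, while $P_{\le 8R}$ localizes the boosted frequencies to a ball of radius $\lesssim R$; applying (\ref{eq:cube Strichartz}) at scale $R$ in $L^p L^p$ and upgrading the time regularity by $R^{2\al}$ (since Schrödinger solutions with spatial frequency $\lesssim R$ have modulation $\lesssim R^2$) matches the weight $R^{-\be} = R^{-(\s+2\al)}$; almost-orthogonality of the $\Z^d$-lattice of Galilean boosts delivers the $\ell^2(k)$ sum.

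The main obstacle will be the decay factor $\jp{N/R}^{-\theta} + R^{-2\theta}$ appearing on the second line. I plan to absorb it using the bilinear/multilinear refinements of Strichartz already established in \cite{LWP(working)}: the $\jp{N/R}^{-\theta}$ part encodes the gain from a small modulation window when $R \ll N$, while the $R^{-2\theta}$ part encodes the bilinear gain at scale $R$. This is precisely the step that uses the slight weakening from $Z_0^s$ to $Z_{\s}^s$; the $\theta = 0$ endpoint is sharp on a single Galilean packet, so $\theta \in (0, \s]$ is what permits the embedding without losing room for the nonlinear estimates downstream.
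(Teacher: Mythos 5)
The paper does not reprove this lemma; it simply cites \cite[Lemma 3.6]{LWP(working)} for the $\theta=0$ case and observes that the argument carries over verbatim for $\theta\ge0$. Your reconstruction of (\ref{eq:time Besov embed Z^s}) and (\ref{eq:step Z^s}) is sound: isolating the $R\sim N$, $k=0$ term does recover the Besov-in-time embedding, and the sharp-cutoff multiplier bound on $B^{\al}_{p,1}$ for $\al<1/p$ (after commuting $\chi_I$ through the spatial projections and Galilean boosts, which is valid since $\chi_I$ depends only on $t$) handles (\ref{eq:step Z^s}). The algebraic identifications $Z_\theta^s=\ell_s^2 Z_\theta^0$ and the duality reduction are also correct, and the outline of $Y^s\hookrightarrow Z_0^s$ via (\ref{eq:cube Strichartz}), interpolation, and almost-orthogonality of the Galilean lattice is the right skeleton (even if the exponent bookkeeping and the $\ell^2_k$ summation would need to be written out).

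The last paragraph, however, contains a genuine conceptual error. The factor $\jp{N/R}^{-\theta}+R^{-2\theta}$ \emph{multiplies} the Besov piece in the definition of $\norm{\cdot}_{Z_\theta^s}$ and is bounded by $2$, so $\norm{u}_{Z_\theta^s}\le\norm{u}_{Z_0^s}$ monotonically in $\theta$; the norm gets \emph{weaker} as $\theta$ increases. Consequently $Y^s\hookrightarrow Z_0^s$ already implies $Y^s\hookrightarrow Z_\theta^s$ for all $\theta\in[0,\s]$ with no extra work and certainly no bilinear refinement. The bilinear Strichartz gain $\jp{N/R}^{-\s_1}+R^{-2\s_1}$ in \cite[(3.25)]{LWP(working)} is invoked by the paper to prove the \emph{nonlinear} estimate (Lemma \ref{lem:Nu bound}), where using a weaker $Z_\theta^1$ on the right-hand side makes the estimate genuinely harder and a gain is needed to compensate; it plays no role in the present linear embedding. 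Likewise, the remark about Galilean packets saturating $\norm{\cdot}_{Y^s}\sim\norm{\cdot}_{Z_0^s}$ is the paper's motivation for why $\theta>0$ is needed for the \emph{inverse} property (Proposition \ref{prop:inverse embedding}), not for the forward embedding — the forward embedding holds, and is sharp up to constants, already at $\theta=0$. You should remove the appeal to bilinear estimates here and replace the discussion with the one-line observation that $\theta\mapsto\norm{\cdot}_{Z_\theta^s}$ is nonincreasing.
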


\begin{proof}
These are shown in \cite[Lemma 3.6]{LWP(working)} for the case $\theta=0$.
The proofs are identical for general $\theta\ge0$.
\end{proof}
Indeed, the $\s$-gap for the interval $\al\in\left[\s,\frac{1}{p}-\s\right]$
in (\ref{eq:Z^s def}) is artificial and can be relaxed to any compact
subinterval of $(0,\frac{1}{p})$. In particular, we have the following:
\begin{lem}[{\cite[Lemma 3.6]{LWP(working)}}]
Let $s\in\R$. For $\al\in(0,\frac{1}{p})$, we have
\begin{equation}
\norm{N^{s}\max_{\substack{R\in2^{\N}\\
R\le8N
}
}R^{-\be}\norm{\norm{\psi P_{\le8R}I_{Rk}u_{N}}_{B_{p,1}^{\al}L^{p}}}_{\ell^{2}(k\in\Z^{d})}}_{\ell^{2}(N\in2^{\N})}\les_{\al}\norm u_{Y^{s}},\label{eq:Z^s-Y^s relax}
\end{equation}
where the scaling condition $\be=\s+2\al$ is imposed.
\end{lem}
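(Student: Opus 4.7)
The plan is to extend the admissible range of the Besov smoothness parameter $\al$ from $[\s,\frac{1}{p}-\s]$, where the bound is immediate from (\ref{eq:Z^s def}) and (\ref{eq:Z^s-Y^s}), to the whole open range $(0,\frac{1}{p})$ via the real interpolation inequality (\ref{eq:Besov real interp E s0s1}). The key structural observation is that $\be(\al)=\s+2\al$ is affine in $\al$, so the scaling weight $R^{-\be(\al)}$ behaves well under interpolation: if $\al=(1-\theta)\al_{0}+\theta\al_{1}$, then $R^{-\be(\al)}=(R^{-\be(\al_{0})})^{1-\theta}(R^{-\be(\al_{1})})^{\theta}$.

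To cover the lower gap $\al\in(0,\s)$, I will establish an auxiliary endpoint estimate at $\al=0$. Writing $v=\psi P_{\le8R}I_{Rk}u_{N}$, the Galilean $L^{p}$-isometry identifies $P_{\le8R}I_{Rk}$ with a spatial-frequency projection onto a cube of side $O(R)$ centered at a translate of $Rk$; applying the cube Strichartz (\ref{eq:cube Strichartz}) on $\supp\psi$ then yields
\[
\norm{v}_{L^{p}L^{p}}\les R^{\s}\norm{\bar{P}_{R,k}u_{N}}_{Y^{0}},
\]
where $\bar{P}_{R,k}$ denotes the corresponding shifted cube projection. The finite overlap of the cubes $\{\bar{P}_{R,k}\}_{k}$ implies $\norm{\norm{\bar{P}_{R,k}u_{N}}_{Y^{0}}}_{\ell^{2}(k)}\les\norm{u_{N}}_{Y^{0}}$. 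Combining with the embedding $B_{p,\infty}^{0}L^{p}\hook L^{p}L^{p}$ and applying (\ref{eq:Besov real interp E s0s1}) with $s_{0}=0$, $s_{1}=\s$, and $\theta=\al/\s$ gives
\[
R^{-\be(\al)}\norm{v}_{B_{p,1}^{\al}L^{p}}\les(R^{-\s}\norm{v}_{L^{p}L^{p}})^{1-\al/\s}\cdot(R^{-3\s}\norm{v}_{B_{p,1}^{\s}L^{p}})^{\al/\s}.
\]
Taking $\ell^{2}(k)$, then $\max_{R\le8N}$, then the weighted $\ell^{2}(N)$, and applying H\"older at each step distributes the $\norm{u}_{Y^{s}}$ bound across the two factors, giving the claim on $(0,\s]$.

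The upper gap $\al\in(\frac{1}{p}-\s,\frac{1}{p})$ will be treated symmetrically, interpolating between $\al_{0}=\frac{1}{p}-\s$ (known) and an endpoint $\al_{1}$ approaching $\frac{1}{p}$. The auxiliary bound near $\al=\frac{1}{p}$ is the main technical obstacle: it will rely on a time-Bernstein argument exploiting the rapid decay of $\widehat{\psi}$ together with the dispersive concentration of $u_{N}$, controlling $R^{-(\s+2/p)}\norm{v}_{B_{p,\infty}^{1/p}L^{p}}$ via a shifted-cube $L^{\infty}L^{p}$-Strichartz-type bound. Once this is in place, the same interpolation--H\"older routine closes the estimate for every $\al\in(0,\frac{1}{p})$, with an implicit constant depending on the distance of $\al$ from the endpoints.
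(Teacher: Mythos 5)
Your proposal does not recover the paper's argument and contains a genuine gap. The paper's ``proof'' of this lemma is entirely a citation: it simply observes that the proof of \cite[Lemma 3.6]{LWP(working)} establishing the embedding $Y^{s}\hook Z_{\theta}^{s}$ (i.e., the second line of (\ref{eq:Z^s def})) never uses anything beyond $0<\al<\frac{1}{p}$ and therefore yields (\ref{eq:Z^s-Y^s relax}) verbatim for the full open range. No interpolation, no endpoint reconstruction. The restriction to $[\s,\frac{1}{p}-\s]$ in (\ref{eq:Z^s def}) is a design choice in the solution norm (so that the $\max$ over $\al$ is taken on a compact set, with uniform constants), not a constraint in the proof of the $Y^s$ embedding.

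Your interpolation route is a different strategy, and while the affine scaling observation $\be(\al)=\s+2\al$ and the H\"older distribution across $\ell^{2}(k)$, $\max_{R}$, and $\ell^{2}(N)$ are sound bookkeeping, the approach does not close. The lower endpoint at $\al=0$ is plausible: after undoing the Galilean transform (under which $Y^{0}$ is invariant) one can indeed apply (\ref{eq:cube Strichartz}) to shifted cubes of side $\sim R$ and use finite overlap to sum in $k$, giving $\norm{\psi P_{\le8R}I_{Rk}u_{N}}_{L_{t,x}^{p}}\les R^{\s}\norm{u_{N}}_{Y^{0}}$ in $\ell^{2}(k)$. But the upper endpoint is not a detail that can be deferred. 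To cover all of $(\frac{1}{p}-\s,\frac{1}{p})$ by interpolating from $\al_{0}=\frac{1}{p}-\s$ you would need a seed bound at, or arbitrarily close to, $\al=\frac{1}{p}$, and $\al=\frac{1}{p}$ is exactly the critical time-regularity threshold for $L^{p}$-based Besov spaces (the borderline of $B_{p,1}^{1/p}\hook L^{\infty}$). Establishing a uniform-in-$(R,k,N)$ bound on $R^{-(\s+2/p)}\norm{\psi P_{\le8R}I_{Rk}u_{N}}_{B_{p,\infty}^{1/p}L^{p}}$ is not a routine Bernstein argument; the sentence you write about ``rapid decay of $\widehat{\psi}$'' and a ``shifted-cube $L^{\infty}L^{p}$-Strichartz-type bound'' is a placeholder, not a proof, and in effect would require redoing the estimate of \cite{LWP(working)} from scratch near the endpoint --- at which point the whole interpolation scaffolding becomes superfluous. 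As written, the statement is not proved for $\al\in(\frac{1}{p}-\s,\frac{1}{p})$.
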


\begin{proof}
This is just a part of (\ref{eq:Z^s-Y^s}), bringing the second line
of (\ref{eq:Z^s def}) with $\al$ replaced by a member of $(0,\frac{1}{p})$.
The proof in \cite[Lemma 3.6]{LWP(working)} used only that $0<\al<\frac{1}{p}$
and works identically here.
\end{proof}
\begin{lem}[{\cite[(4.8)]{LWP(working)}}]
\label{lem:Nu bound}Let $\theta\in[0,\s]$. For $u\in Z_{\theta}^{1}$
such that $\supp(u)\subset[-1,1]\times\T^{d}$, we have
\begin{equation}
\norm{\mc N(u)}_{(Z_{\theta}^{-1})'}\les\norm u_{Z_{\theta}^{1}}^{1+a}.\label{eq:Nu bound}
\end{equation}
\end{lem}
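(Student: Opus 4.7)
The plan is to adapt the proof of the $\theta=0$ version of this bound given in \cite[(4.8)]{LWP(working)}, and to verify that the weaker decay factor $\jp{N/R}^{-\theta}+R^{-2\theta}$ appearing in the second line of (\ref{eq:Z^s def}) still allows the multilinear estimate to close. First, I would dualize via the $L^{2}(\R\times\T^{d})$-pairing and reduce matters to bounding
\[
\left|\int_{\R\times\T^{d}}\mc N(u)\,\bar v\,dt\,dx\right|\les\norm u_{Z_{\theta}^{1}}^{1+a}\norm v_{Z_{\theta}^{-1}}
\]
for all test functions $v\in Z_{\theta}^{-1}$. The time support hypothesis $\supp u\subset[-1,1]\times\T^{d}$ neutralizes the temporal cutoff $\psi$ in the definition of $Z_{\theta}^{s}$. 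After Littlewood--Paley decomposition of $u$ and $v$, the non-algebraic factor $|u|^{a}$ (present for $d\ge5$) is treated via a fractional chain rule, using the temporal Besov control (\ref{eq:time Besov embed Z^s}) to supply the required fractional Sobolev regularity in time with integrability index $p$.

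For each resulting frequency configuration I would, by frequency orthogonality, restrict to a high-low interaction in which one frequency $N$ of $u$ roughly matches the frequency of $\bar v$. I place the two highest-frequency factors into a bilinear Strichartz estimate at a Galilean localization scale $R\le8N$ using the second line of (\ref{eq:Z^s def}), while distributing the remaining factors into the $L^{q}L^{r}$ Strichartz pieces afforded by the first line of (\ref{eq:Z^s def}). Summation over the dyadic scales $N,R$ and over the Galilean translates $k\in\Z^{d}$ is then carried out by Cauchy--Schwarz against the two layers of $\ell^{2}$-structure built into the $Z_{\theta}^{s}$ norm.

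The main obstacle is verifying that the bilinear Strichartz step \cite[(3.25)]{LWP(working)} still goes through after the replacement of the prefactor $1$ by $\jp{N/R}^{-\theta}+R^{-2\theta}$. I expect this to succeed for $\theta\le\s$: the apparent loss of $R^{2\theta}$ (or $(N/R)^{\theta}$) on one side is compensated by the additional modulational smoothing obtained from restricting a frequency-$N$ wave packet to the Fourier ball of radius $R$ around the Galilean momentum $Rk$, which produces a matching spatial oscillation of scale $R^{-1}$ under $I_{Rk}$. Concretely, applying the standard bilinear Strichartz estimate to the Galilean-restricted piece gives an extra $R^{2\theta}$-slack in dispersion, exactly matching the artificial loss, yielding a mildly weakened bilinear gain that is still summable in $N,R,k$. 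Once this single ingredient is confirmed, every other step of the argument reproduces the $\theta=0$ proof of \cite[Section 4]{LWP(working)} verbatim.
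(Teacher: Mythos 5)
Your high-level plan matches the paper's: both you and the paper reduce the claim to re-verifying the bilinear Strichartz estimate \cite[(3.25)]{LWP(working)}, which is the unique place in the proof of \cite[(4.8)]{LWP(working)} where the second line of (\ref{eq:Z^s def}) — and hence the decay factor $\jp{N/R}^{-\theta}+R^{-2\theta}$ — enters. The surrounding machinery (dualization, Littlewood--Paley decomposition, fractional chain rule, distributing factors into $L^qL^r$ Strichartz slots, Cauchy--Schwarz against the $\ell^2$-structure) is correctly described and reproduces the $\theta=0$ argument verbatim, exactly as the paper asserts.

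However, your justification of \emph{why} the bilinear Strichartz step still closes is off. You invoke an ``extra $R^{2\theta}$-slack in dispersion'' produced by ``additional modulational smoothing'' from the Galilean restriction — suggesting that a new or strengthened bilinear estimate is available and must be proved. That is not the mechanism, and proving such a strengthening would be a detour. The actual observation is purely parametric and much simpler: \cite[(3.25)]{LWP(working)} already provides a bilinear gain with prefactor $\jp{N/R}^{-\s_1}+R^{-2\s_1}$, and the exponent $\s_1$ is \emph{chosen} there with $\s_1\gg\s$; in fact (3.25) is the first place $\s_1$ is introduced. Weakening the $Z$-norm by $\theta\le\s$ simply reduces the effective gain exponent to $\tilde\s_1=\s_1-\s$, which remains strictly positive and hence still summable over $R,N$. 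No additional dispersive input is required; one is merely spending a small fraction of slack already present in (3.25). If you frame it this way, the rest of your sketch goes through as written.
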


\begin{proof}
(\ref{eq:Nu bound}) is \cite[(4.8)]{LWP(working)} for the case $\theta=0$.
In \cite{LWP(working)}, the key bilinear Strichartz estimate \cite[(3.25)]{LWP(working)}
is the only place that distinguishes $Z_{\theta}^{1}$ and $Z_{0}^{1}$.
\cite[(3.25)]{LWP(working)} contains a bilinear gain by $\text{\ensuremath{\jp{N/R}^{-\s_{1}}}}+R^{-2\s_{1}}$,
which is the first-appearing place of the exponent $\s_{1}\gg\s$
and allows a perturbation by $\tilde{\s_{1}}=\s_{1}-\s$. Thus, the
proof of \cite[(4.8)]{LWP(working)} works identically on showing
(\ref{eq:Nu bound}) for general $\theta\in[0,\s]$.
\end{proof}
Just as commented after Definition \ref{def:Z^s_theta def}, we mostly
use $Z_{\s}^{s}$ spaces in later proofs. For simplicity of notation,
we denote $Z^{s}:=Z_{\s}^{s}$.

We show an inverse property of the embedding $Y^{1}\hook Z^{1}=Z_{\s}^{1}$
localized to one dyadic frequency support, which is the main reason
why we adopted the space $Z_{\s}^{1}$. This can be naturally expected
since $Y^{1}$ is a space of regularity $1$, while $Z^{1}$ is based
on Besov spaces of regularities $1-\s<1$ (up to a fractional exchange
of $\d_{t}$ and $\De$).
\begin{defn}
A family of distributions $\left\{ f_{n}\right\} $ on either $\R^{d}$
or $\R\times\R^{d}$ is said to be \emph{weakly nonzero} if for any
subsequence $\left\{ n_{k}\right\} $, $f_{n_{k}}$ does not converge
weakly to zero.
\end{defn}

\begin{prop}
\label{prop:inverse embedding}Let $C<\infty$ and $\epsilon>0$.
Let $N_{n}\rightarrow\infty$ be a sequence of dyadic numbers. Let
$I_{n}\subset[0,1]$ be a sequence of intervals such that $\left|I_{n}\right|\rightarrow0$.
Let $\left\{ u_{n}\right\} $ be a sequence in $Y^{1}$ such that
$\supp(u_{n})\subset I_{n}\times\T^{d}$,
\begin{equation}
\sup_{n}\norm{P_{N_{n}}u_{n}}_{Y^{1}}\le C,\label{eq:inv sup}
\end{equation}
and
\begin{equation}
\inf_{n}\norm{P_{N_{n}}u_{n}}_{Z^{1}}\ge\epsilon.\label{eq:inv inf}
\end{equation}
Then, there exists a frame $\left\{ \mc O_{n}\right\} =\left\{ (N_{n},t_{n},x_{n})\right\} $,
where $t_{n}\in I_{n}$ and $x_{n}\in\T^{d}$, such that $\left\{ \iota_{\mc O_{n}}u_{n}(0)\right\} $
is weakly nonzero.
\end{prop}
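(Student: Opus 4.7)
The plan is to combine a pointwise maximizer strategy with a refined Strichartz-type interpolation that inverts the embedding $Y^1\hookrightarrow Z^1$.

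\emph{Setup.} Define $M_n := N_n^{1-d/2}\|P_{N_n}u_n\|_{L^\infty_{t,x}(I_n\times\T^d)}$ and pick $(t_n,x_n)\in I_n\times\T^d$ at which $|P_{N_n}u_n|$ attains at least half of this supremum (possible since $P_{N_n}u_n\in C^0H^1$ is smooth in $x$), so that $|P_{N_n}u_n(t_n,x_n)|\ge \tfrac12 M_nN_n^{d/2-1}$. The goal is to show $M_n\gtrsim\delta=\delta(\ep,C)>0$ uniformly in $n$.

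\emph{Refined interpolation.} The analytic core is the bound
\begin{equation}\label{eq:prop-plan-refined-strichartz}
\|P_{N_n}u_n\|_{Z^1}\ \les\ \|P_{N_n}u_n\|_{Y^1}^{1-\theta}\cdot M_n^{\theta},\qquad \theta:=2\s/d.
\end{equation}
I plan to verify (\ref{eq:prop-plan-refined-strichartz}) term by term from Definition~\ref{def:Z^s_theta def}. For the Strichartz-type first term $N^{1-\s}\|\psi u_N\|_{L^qL^r}$, interpolate $L^qL^r$ between $L^\infty_{t,x}$ and an $L^2$-admissible Strichartz pair $L^{q_0}L^{r_0}$: choosing $1/q=(1-\theta)/q_0$ and $1/r=(1-\theta)/r_0$ with $\theta=2\s/d$ matches the $Z^1$-scaling $d/r+2/q=d/2-\s$ to the $L^2$-Strichartz scaling $d/r_0+2/q_0=d/2$, while (\ref{eq:cube Strichartz}) controls the endpoint by $\|P_{N_n}u_n\|_{L^{q_0}L^{r_0}}\les N_n^{-1}\|P_{N_n}u_n\|_{Y^1}$, and the $N_n$-factors cancel. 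For the Galilean-shifted Besov term $\|\psi P_{\le 8R}I_{Rk}P_{N_n}u_n\|_{B^\al_{p,1}L^p}$, use that $|I_{Rk}v|=|v|$ pointwise and interpolate the time-Besov regularity against $L^\infty_{t,x}$, with the higher-regularity endpoint supplied by (\ref{eq:Z^s-Y^s relax}) at an $\al'\in(0,1/p)$ slightly larger than $\al$. The weight $\jp{N/R}^{-\s}+R^{-2\s}$ absorbs precisely the loss $|Rk|^2\les N^2$ per time derivative coming from the noncommutation of $\d_t$ with $I_{Rk}$.

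\emph{Conclusion.} From (\ref{eq:prop-plan-refined-strichartz}), (\ref{eq:inv sup}), and (\ref{eq:inv inf}), we get $\ep\les C^{1-\theta}M_n^{\theta}$, hence $M_n\gtrsim\delta>0$. The rescaled sequence $\iota_{(N_n,t_n,x_n)}P_{N_n}u_n(0)$ is uniformly $\dot H^1$-bounded with Fourier support in a fixed bounded set of $\R^d$, so Bernstein makes it uniformly $L^\infty$-bounded and equicontinuous; Arzel\`a--Ascoli extracts, along any subsequence, a further subsequence with $\iota P_{N_n}u_n(0)\to f$ uniformly on compact sets, and $|f(0)|\ge\delta/2>0$. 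A key observation is that after rescaling, the multiplier of $P_{N_n}$ becomes a \emph{fixed} $\R^d$-Fourier multiplier $\tilde P$ with symbol $\tilde m(\eta):=\prod_{j=1}^{d}\psi(\eta_j)-\prod_{j=1}^{d}\psi(2\eta_j)$; that is, $\iota_{(N_n,t_n,x_n)}P_{N_n}u_n(0)=\tilde P\iota_{(N_n,t_n,x_n)}u_n(0)$. Hence if $\iota u_n(0)\weak\tilde f$ along a subsequence, then $\iota P_{N_n}u_n(0)\weak\tilde P\tilde f$, forcing $\tilde P\tilde f=f\ne 0$ and therefore $\tilde f\ne 0$. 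Thus no subsequence of $\iota u_n(0)$ converges weakly to zero, which is the claimed weak nonvanishing.

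\emph{Main obstacle.} The delicate step is (\ref{eq:prop-plan-refined-strichartz}) for the Galilean Besov second term of $Z^1$: one must balance the factor $|Rk|^2\sim N^2$ per $\al$-time-derivative (generated by the commutator of $\d_t$ with $I_{Rk}$) against the weight $\jp{N/R}^{-\s}+R^{-2\s}$ and Bernstein losses in converting the $B^\al_{p,1}L^p$ endpoint to $L^\infty_{t,x}$, uniformly across $R\le 8N$ and $\al\in[\s,1/p-\s]$. The analogous bound fails for $Z^1_0$ (cf.\ the remark after Definition~\ref{def:Z^s_theta def}), so it is precisely the $\s$-weight in $Z^s=Z^s_\s$ that makes (\ref{eq:prop-plan-refined-strichartz}) possible.
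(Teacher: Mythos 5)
Your high-level strategy is sound and matches the paper's: show that the (rescaled) sup-norm $M_n$ is bounded below, pick the near-maximizer as the frame point, and conclude via equicontinuity. However, the central step — your proposed interpolation inequality
$\|P_{N_n}u_n\|_{Z^1}\les\|P_{N_n}u_n\|_{Y^1}^{1-\theta}M_n^{\theta}$ — does not hold as stated, and this is a genuine gap. The tell is that your proof of this inequality makes no use of the hypothesis $|I_n|\to 0$, which the paper's argument shows to be essential. Consider a superposition of well-separated Galilean packets $\sum_{k}I_{k}P_{\le 1}e^{it\De}\delta(\cdot-x_{k})$ at parameter $R\sim 1$: since the weight $\jp{N/R}^{-\s}+R^{-2\s}\sim 1$ when $R\sim 1$, the $\s$-weight does \emph{not} suppress this regime (it only suppresses intermediate $R$), and such a function has $\|u\|_{Z^1_\s}\sim\|u\|_{Z^1_0}\sim\|u\|_{Y^1}$ while $M_n$ can be made arbitrarily small by spreading packets out, falsifying your inequality. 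The paper's proof handles this via a case analysis: in Case 2-1 ($R_n\sim 1$), it is precisely the shrinking time support — via a time-Bernstein comparison of the $B^{\al}_{p,1}L^p$ and $B^{\tilde\al}_{p,1}L^p$ norms for functions supported on a vanishing interval — that rules this out. Your sketch that ``the weight $\jp{N/R}^{-\s}+R^{-2\s}$ absorbs precisely the loss $|Rk|^2\les N^2$'' is not correct in the $R\sim 1$ regime, which is exactly where the difficulty lies.

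A second, smaller issue: for the Strichartz term of $Z^1$, you propose to interpolate against an ``$L^2$-admissible Strichartz pair'' with a bound of the form $\|P_{N_n}u_n\|_{L^{q_0}L^{r_0}}\les N_n^{-1}\|P_{N_n}u_n\|_{Y^1}$. No such lossless estimate is available on $\T^d$; the scale-invariant Strichartz estimate (\ref{eq:cube Strichartz}) holds only for $p>\frac{2(d+2)}{d}$ and always carries the loss $N^{\tilde\s}$ with $\tilde\s>0$. The paper circumvents this by using a Bernstein endpoint $L^\infty L^{2_\s}$ (no time gain) and a supercritical Strichartz endpoint $L^{\tilde p}_{t,x}$ ($\tilde p<p$), and then performs two extrapolations rather than one interpolation. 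Your $\theta=2\s/d$ calculation would need to be redone with these corrected endpoints. The final weak-nonvanishing step via equicontinuity and the fixed rescaled multiplier $\tilde P$ is essentially right, though the paper's more direct pairing against $\delta_1(\cdot-x_n)$ avoids the $\T^d\to\R^d$ multiplier identification.
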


\begin{proof}
Up to time-translations, we may assume $I_{n}=[0,T_{n})$, $T_{n}\rightarrow0$.
Recalling (\ref{eq:Z^s def}), for each $n$, either of the following
holds:
\begin{enumerate}
\item There exists $q_{n}\in[p,\frac{1}{\s}]$ such that for $\frac{d}{r_{n}}:=\frac{d}{2}-\s-\frac{2}{q_{n}}$,
\begin{equation}
\norm{\psi P_{N_{n}}u_{n}}_{L^{q_{n}}L^{r_{n}}}\gtrsim N_{n}^{\s-1}.\label{eq:(1)}
\end{equation}
\item There exist $\al_{n}\in[\s,\frac{1}{p}-\s]$ and a dyadic number $R_{n}\le8N$
such that for $\be_{n}:=\s+2\al_{n}$,
\begin{equation}
\left(\text{\ensuremath{\jp{N_{n}/R_{n}}^{-\s}}}+R_{n}^{-2\s}\right)R_{n}^{-\be_{n}}\norm{\norm{\psi P_{\le8R_{n}}I_{R_{n}k}P_{N_{n}}u_{n}}_{B_{p,1}^{\al_{n}}L^{p}}}_{\ell^{2}(k\in\Z^{d})}\gtrsim N_{n}^{-1}.\label{eq:(2)}
\end{equation}
\end{enumerate}
\emph{Case 1.} By interpolation, we assume $q_{n}\in\left\{ p,\frac{1}{\s}\right\} $.
Passing to a subsequence, we assume $q_{n}=q\in\left\{ p,\frac{1}{\s}\right\} $.
By (\ref{eq:inv sup}), we have
\begin{equation}
\norm{\psi P_{N_{n}}u_{n}}_{L^{\infty}L^{2_{\s}}}\les N_{n}^{\s}\norm{\psi P_{N_{n}}u_{n}}_{L^{\infty}L^{2}}\les N_{n}^{\s-1}\norm{P_{N_{n}}u_{n}}_{Y^{1}}\les N_{n}^{\s-1}.\label{eq:1-2}
\end{equation}
Since $p\le q<\infty$, extrapolating between (\ref{eq:(1)}) and
(\ref{eq:1-2}) gives
\begin{equation}
\norm{\psi P_{N_{n}}u_{n}}_{L_{t,x}^{p}}\gtrsim N_{n}^{\s-1}.\label{eq:1-11}
\end{equation}
Let $\tilde{\s}\in(0,\s)$ and $\tilde p:=\frac{d+2}{\frac{d}{2}-\tilde{\s}}$.
By (\ref{eq:cube Strichartz}) and (\ref{eq:inv sup}), we have
\begin{equation}
\norm{\psi P_{N_{n}}u_{n}}_{L_{t,x}^{\tilde p}}\les N^{\tilde{\s}}\norm{P_{N_{n}}u_{n}}_{Y^{0}}\les N_{n}^{\tilde{\s}-1}.\label{eq:1-12}
\end{equation}
Since $\tilde p<p<\infty$, extrapolating between (\ref{eq:1-11})
and (\ref{eq:1-12}) yields
\[
\norm{\psi P_{N_{n}}u_{n}}_{L_{t,x}^{\infty}}\gtrsim N_{n}^{\frac{d}{2}-1}.
\]
Thus, there exists a point $(t_{n},x_{n})\in I_{n}\times\T^{d}$ such
that 
\[
\left|\jp{u_{n}(t_{n}),\delta_{N_{n}}(\cdot-x_{n})}_{L^{2}(\T^{d})}\right|=\left|P_{N_{n}}u_{n}(t_{n},x_{n})\right|\gtrsim N_{n}^{\frac{d}{2}-1}.
\]
We choose the frame $\left\{ \mc O_{n}\right\} $ as $\left\{ \mc O_{n}\right\} =\left\{ (N_{n},t_{n},x_{n})\right\} $.
The sequence $\jp{\iota_{\mc O_{n}}u_{n}(0),\delta_{1}}_{L^{2}(\R^{d})}=\jp{N_{n}^{1-\frac{d}{2}}u_{n}(t_{n}),\delta_{N_{n}}(\cdot-x_{n})}_{L^{2}(\T^{d})}$
does not converge to zero on any subsequence, hence we are done for
this case.

\emph{Case 2.} By interpolation, we assume $\al_{n}\in\left\{ \s,\frac{1}{p}-\s\right\} $.
Passing to a subsequence, we assume $\al_{n}=\al\in\left\{ \s,\frac{1}{p}-\s\right\} $.
Fix $\tilde{\al}\in(\al,\frac{1}{p})$. Denote $\be:=\s+2\al$ and
$\tilde{\be}:=\s+2\tilde{\al}$. By (\ref{eq:Z^s-Y^s relax}), we
have
\begin{equation}
R_{n}^{-\tilde{\be}}\norm{\norm{\psi P_{\le8R_{n}}I_{R_{n}k}P_{N_{n}}u_{n}}_{B_{p,1}^{\tilde{\al}}L^{p}}}_{\ell^{2}(k\in\Z^{d})}\les N_{n}^{-1}\norm{P_{N_{n}}u_{n}}_{Y^{1}}\les N_{n}^{-1}.\label{eq:(2)R}
\end{equation}

By (\ref{eq:inv sup}) and (\ref{eq:Z^s-Y^s}), we have $\norm{u_{n}}_{Z_{0}^{1}}\les1$,
comparing which and (\ref{eq:(2)}), the decay part $\text{\ensuremath{\jp{N_{n}/R_{n}}^{-\s}}}+R_{n}^{-2\s}$
of (\ref{eq:(2)}) cannot decrease to zero. Hence we may assume either
$R_{n}\sim1$ or $R_{n}\sim N_{n}$.

\emph{Case 2-1: $R_{n}\sim1$. }By (\ref{eq:(2)}) and (\ref{eq:(2)R}),
we have
\[
\norm{\norm{\psi P_{\le8R_{n}}I_{R_{n}k}P_{N_{n}}u_{n}}_{B_{p,1}^{\tilde{\al}}L^{p}}}_{\ell^{2}(k\in\Z^{d})}\les\norm{\norm{\psi P_{\le8R_{n}}I_{R_{n}k}P_{N_{n}}u_{n}}_{B_{p,1}^{\al}L^{p}}}_{\ell^{2}(k\in\Z^{d})},
\]
which cannot happen since $\tilde{\al}>\al$ and $\supp\left(\psi P_{\le8R_{n}}I_{R_{n}k}P_{N_{n}}u_{n}\right)\subset I_{n}\times\T^{d}$
holds with $\left|I_{n}\right|\rightarrow0$ for each $k\in\Z^{d}$.
Thus, the subcase $R_{n}\sim1$ is vacuous.

\emph{Case 2-2: }$R_{n}\sim N_{n}$. Since $I_{R_{n}k}$ translates
the spatial Fourier support by $R_{n}k$, $P_{\le8R_{n}}I_{R_{n}k}P_{N_{n}}$
is zero for $\left|k\right|\gg1$. Thus, only $O(1)$ number of integer
points $k\in\Z^{d}$ contribute to (\ref{eq:(2)}) and there exists
$k_{n}\in\Z^{d}$ such that
\begin{equation}
R_{n}^{-\be}\norm{\psi P_{\le8R_{n}}I_{R_{n}k_{n}}P_{N_{n}}u_{n}}_{B_{p,1}^{\al}L^{p}}\gtrsim N_{n}^{-1}.\label{eq:(2)'}
\end{equation}
By (\ref{eq:Besov real interp E s0s1}), an extrapolation between
(\ref{eq:(2)R}) and (\ref{eq:(2)'}) yields
\begin{equation}
R_{n}^{-\s}\norm{\psi P_{\le8R_{n}}I_{R_{n}k_{n}}P_{N_{n}}u_{n}}_{B_{p,\infty}^{0}L^{p}}\gtrsim N_{n}^{-1}.\label{eq:2-2}
\end{equation}
By (\ref{eq:2-2}), the embedding $L_{t,x}^{p}\hook B_{p,\infty}^{0}L^{p}$,
and the $L_{t,x}^{p}$-norm conservation of the Galilean transform
$I_{R_{n}k_{n}}$, we have
\begin{equation}
\norm{\psi P_{N_{n}}u_{n}}_{L_{t,x}^{p}}\gtrsim R_{n}^{\s}N_{n}^{-1}\sim N_{n}^{\s-1},\label{eq:2-3}
\end{equation}
which is just (\ref{eq:1-11}) we obtained in the proof for Case 1;
proceeding as earlier finishes the proof. 
\end{proof}
\begin{rem}
Proposition \ref{prop:inverse embedding} concerns only functions
localized in one dyadic Fourier support. This is the best one can
expect since $Z^{1}=\ell^{2}Z^{1}$ and $Y^{1}=\ell^{2}Y^{1}$. This
is in contrast to previous results \cite{ionescu2012energy,YUE2021754}
for $d=3,4$, where an inverse property for the inequality $\norm{e^{it\De}\phi}_{L^{\infty}B_{\infty,\infty}^{-d/2}(I\times\T^{d})}\les\norm{\phi}_{L^{2}(\T^{d})}$
was enough.

Thus, the concentration of a solution to (\ref{eq:NLS T^d}) is not
directly obvious from Proposition \ref{prop:inverse embedding}. This
is a major step of this work; we continue the discussion in Section
\ref{subsec:nonlinear conc}.
\end{rem}

\section{Global well-posedness of NLS on $\protect\T^{d}$\label{sec:Global-well-posedness-of NLS}}

\subsection{Weak scattering behaviors\label{subsec:Weak-scattering-behaviors}}

In this subsection, we show uniform convergences of scattering limits
over sequences of solutions.

For $\R^{d}$, we show that for a bounded sequence of solutions to
(\ref{eq:NLS R^d}), weak convergence to the scattering limit as $t\rightarrow-\infty$
uniformly occurs. A similar property is shown for $\T^{d}$ with respect
to any frame $\left\{ \mc O_{n}\right\} $. This phenomenon is generic;
the only ingredient is any decay of a linear Schr\"odinger propagation
(from $\phi\in C_{0}^{\infty}(\R^{d})$ or $P_{N}\delta_{\T^{d}}$)
in a solution norm, which implies, in view of duality, that the inhomogeneous
evolution from time $t$ too far from $0$ is almost negligible to
the scattering limit in the weak sense.
\begin{lem}
\label{lem:scatter R^d}Let $d\ge3$. Let $\left\{ (v_{n},I_{n})\right\} $
be a bounded sequence of cutoff solutions to (\ref{eq:NLS R^d}) in
$C^{0}\dot{H}^{1}\cap L_{t,x}^{\frac{2d+4}{d-2}}(\R\times\R^{d})$.
Then, 
\[
\left\{ e^{iT\De}v_{n}(-T)\right\} _{n\in\N}
\]
is uniformly convergent (in the weak sense) as $T\rightarrow\infty$.
\end{lem}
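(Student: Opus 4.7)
The plan is to reduce to a Cauchy-type bound on scalar pairings by duality, then use a Duhamel argument to separate the nonlinear factor (uniformly bounded in an appropriate space) from the linear evolution of the test function (which enjoys a vanishing dispersive tail). The key observation is that although $e^{iT\De}v_n(-T)$ converges strongly in $\dot{H}^1$ for each fixed $n$, the strong rate need not be uniform; however, testing against a \emph{fixed} smooth test function pulls the entire $n$-dependence into a factor that can be estimated by one Strichartz bound uniformly in $n$.

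By the uniform $C^0\dot{H}^1$ bound on $\{v_n\}$ and density, it suffices to prove that for every $\phi\in C_0^\infty(\R^d)$ the scalar sequence $\langle e^{iT\De}v_n(-T),\phi\rangle$ is Cauchy in $T$, uniformly in $n$. For a cutoff solution, the Duhamel identity $iu_t+\De u=\chi_{I_n}\mc N(u)$ yields, for $T'<T$,
\begin{equation*}
e^{iT\De}v_n(-T)-e^{iT'\De}v_n(-T')=i\int_{-T}^{-T'}e^{-is\De}\chi_{I_n}(s)\mc N(v_n(s))\,ds.
\end{equation*}
Taking the inner product with $\phi$, moving $e^{-is\De}$ to the test-function side via $(e^{-is\De})^*=e^{is\De}$, and shifting a derivative using the identity $\langle f,g\rangle=\langle|\na|f,|\na|^{-1}g\rangle$, the pairing becomes
\begin{equation*}
i\int_{-T}^{-T'}\chi_{I_n}(s)\,\bigl\langle|\na|\mc N(v_n(s)),\,e^{is\De}|\na|^{-1}\phi\bigr\rangle_{L^2_x}\,ds.
\end{equation*}

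I would then apply H\"older's inequality with the dual pair $L^2_s L^{2d/(d+2)}_x\times L^2_s L^{2d/(d-2)}_x$. For the nonlinear factor, the pointwise bound $|\na\mc N(v)|\les|v|^{4/(d-2)}|\na v|$, spatial H\"older with $L^{d/2}_x\cdot L^{2d/(d-2)}_x$, the Sobolev embedding $\dot{H}^1\hook L^{2d/(d-2)}$, and Lemma \ref{lem:local theory} combine to give
\begin{equation*}
\norm{|\na|\mc N(v_n)}_{L^2_s L^{2d/(d+2)}_x(\R\times\R^d)}\les\norm{v_n}_{L^\infty\dot{H}^1}^{4/(d-2)}\norm{\na v_n}_{L^2_s L^{2d/(d-2)}_x}\les 1,
\end{equation*}
uniformly in $n$ (with $L^p$-boundedness of Riesz transforms for $1<p<\infty$ used to pass between $\na$ and $|\na|$). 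For the linear factor, since $d\ge 3$ one has $|\na|^{-1}\phi\in L^2(\R^d)$ for $\phi\in C_0^\infty(\R^d)$ (by Plancherel and local integrability of $|\xi|^{-2}$), so the Keel--Tao endpoint Strichartz estimate gives
\begin{equation*}
\norm{e^{is\De}|\na|^{-1}\phi}_{L^2_s L^{2d/(d-2)}_x(\R\times\R^d)}\les\norm{|\na|^{-1}\phi}_{L^2}<\infty,
\end{equation*}
and hence the tail $\norm{e^{is\De}|\na|^{-1}\phi}_{L^2_s L^{2d/(d-2)}_x((-\infty,-T'))}$ vanishes as $T'\to\infty$ by absolute continuity of integration.

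Combining these estimates, the pairing above is bounded by $o_{T'\to\infty}(1)$, with the rate depending only on $\phi$ and on the uniform nonlinear bound, hence uniform in $n$ and in $T>T'$. This establishes the uniform weak Cauchy property and therefore the stated uniform weak convergence. The argument is essentially routine once the correct bookkeeping of Strichartz exponents is fixed; the only nontrivial inputs are the uniform $L^2\dot{W}^{1,2_1}$-bound from Lemma \ref{lem:local theory} (which absorbs all $n$-dependence into a harmless constant) and the endpoint Strichartz estimate. No substantive obstacle is expected beyond this.
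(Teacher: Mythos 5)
Your proof follows essentially the same route as the paper's: reduce by density to a uniform Cauchy estimate for the pairing with a fixed test function $\phi\in C_0^\infty$, use the Duhamel identity, apply H\"older with the dual pair $L^2\dot{W}^{1,2d/(d+2)}\times L^2\dot{W}^{-1,2d/(d-2)}$, bound the nonlinear factor uniformly via Lemma \ref{lem:local theory}, and conclude because $e^{it\De}|\na|^{-1}\phi$ lies in the endpoint Strichartz space $L^2L^{2d/(d-2)}$ so its time-tail vanishes. You simply spell out a few steps (chain rule and spatial H\"older on $\na\mc N(v)$, the invocation of Keel--Tao) that the paper leaves implicit; the argument and its inputs coincide.
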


\begin{proof}
Explicitly, we show that for every $\phi\in C_{0}^{\infty}(\R^{d})$,
\begin{equation}
\limsup_{T_{1},T_{2}\rightarrow\infty}\sup_{n\in\N}\left|\jp{\phi,e^{iT_{1}\De}v_{n}(-T_{1})-e^{iT_{2}\De}v_{n}(-T_{2})}_{L^{2}(\R^{d})}\right|=0.\label{eq:weak scatter R^d claim}
\end{equation}
Applying Lemma \ref{lem:local theory} to $v_{n}\mid_{I_{n}}$ yields
\begin{equation}
\sup_{n\in\N}\norm{v_{n}}_{L^{2}\dot{W}^{1,2_{1}}(\R\times\R^{d})}<\infty.\label{eq:sup W1}
\end{equation}
For $T_{1},T_{2}>0$ and $n\in\N$, we have
\begin{align*}
\left|\jp{\phi,e^{iT_{1}\De}v_{n}(-T_{1})-e^{iT_{2}\De}v_{n}(-T_{2})}_{L^{2}}\right| & =\left|\jp{\phi,\int_{[T_{1},T_{2}]\cap I_{n}}e^{is\De}\mc N(v_{n}(-s))ds}_{L^{2}}\right|\\
 & =\left|\int_{[T_{1},T_{2}]\cap I_{n}}\jp{e^{-is\De}\phi,\mc N(v_{n}(-s))}_{L^{2}}ds\right|,
\end{align*}
then by (\ref{eq:sup W1}) we continue to estimate
\begin{align}
 & \les\norm{\chi_{[-T_{1},-T_{2}]}e^{it\De}\phi}_{L^{2}\dot{W}^{-1,2_{1}}}\norm{\mc N(v_{n})}_{L^{2}\dot{W}^{1,2_{-1}}}\nonumber \\
 & \les\norm{\chi_{[-T_{1},-T_{2}]}e^{it\De}\phi}_{L^{2}\dot{W}^{-1,2_{1}}}.\label{eq:T1T2}
\end{align}
Taking the limit $T_{1},T_{2}\rightarrow\infty$ to (\ref{eq:T1T2})
yields
\begin{align*}
 & \limsup_{T_{1},T_{2}\rightarrow\infty}\sup_{n\in\N}\left|\jp{\phi,e^{iT_{1}\De}v_{n}(-T_{1})-e^{iT_{2}\De}v_{n}(-T_{2})}_{L^{2}}\right|\\
 & \les\limsup_{T_{1},T_{2}\rightarrow\infty}\norm{\chi_{[-T_{1},-T_{2}]}e^{it\De}\phi}_{L^{2}\dot{W}^{-1,2_{1}}},
\end{align*}
which is $0$ since $e^{it\De}\phi$ lies in the Strichartz space
$L^{2}\dot{W}^{-1,2_{1}}$, finishing the proof.
\end{proof}
We show a similar uniform weak scattering property on $\T^{d}$. While
we used only the boundedness of a Strichartz norm of $e^{it\De}\phi$
for $\R^{d}$, since the scaling size matters on $\T^{d}$, we employ
a decay estimate of the kernel $e^{it\De}\delta_{N}=e^{it\De}P_{N}\delta$
uniform on $N$, explicitly the following preparatory fact known as
the extinction lemma:
\begin{lem}[Extinction lemma, \cite{ionescu2012energy}]
\label{prop:kernel 0}Let $d\ge3$. We have
\begin{equation}
\limsup_{\substack{\epsilon\rightarrow0\\
T\rightarrow\infty
}
}\sup_{\substack{N\in2^{\N}\\
TN^{-2}<\epsilon
}
}N^{1-\frac{d}{2}}\norm{\chi_{[TN^{-2},\epsilon]}e^{it\De}\delta_{N}}_{Z^{-1}}=0.\label{eq:kernel 0}
\end{equation}
\end{lem}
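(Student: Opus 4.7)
The plan is to bound $\norm{\chi_{I}v_{N}}_{Z^{-1}}$, where $v_{N}:=e^{it\De}\delta_{N}$ and $I:=[TN^{-2},\epsilon]$, so that $N^{1-d/2}\norm{\chi_{I}v_{N}}_{Z^{-1}}$ tends to zero uniformly in $N$ as $T\to\infty$ and $\epsilon\to0$. First, since $\delta_{N}=P_{N}\delta$ has Fourier support in $|\xi|\sim N$, so does $\chi_{I}v_{N}$, and the $\ell^{2}(M)$-summations in Definition \ref{def:Z^s_theta def} collapse to the single scale $M\sim N$. It therefore suffices to estimate each ingredient norm appearing in (\ref{eq:Z^s def}) at scale $N$ separately and show that each, multiplied by $N^{1-d/2}$, is bounded by $T^{-\delta}+\epsilon^{\delta}$ for some $\delta>0$ uniform in $N$.

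For the first (mixed $L^{q}L^{r}$) term with $d/r=d/2-\s-2/q$, split $I=I_{1}\cup I_{2}$ with $I_{1}:=I\cap[TN^{-2},cN^{-1}]$. On $I_{1}$, for $\epsilon$ small the unique Dirichlet approximation of $2\pi t$ with denominator $\le N$ is $0/1$; Proposition \ref{prop:kernel bound bourgain} with $a=0,q=1$ gives the Euclidean-type dispersive bound $\norm{v_{N}(t)}_{L^{\infty}}\les t^{-d/2}$. Interpolating with $\norm{v_{N}(t)}_{L^{2}}\sim N^{d/2}$ yields $\norm{v_{N}(t)}_{L^{r}}\les t^{-d(1/2-1/r)}N^{d/r}$, and integrating in $L^{q}_{t}$ using $qd(1/2-1/r)=2+q\s$ produces
\[
\norm{\chi_{I_{1}}v_{N}}_{L^{q}L^{r}}\les N^{d/2+\s}\,T^{-1/q-\s}.
\]
After the $Z^{-1}$ weight $N^{-1-\s}$ and the external $N^{1-d/2}$, the $N$-powers cancel exactly, leaving $T^{-1/q-\s}\to 0$. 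On $I_{2}=I\cap[cN^{-1},\epsilon]$, which is nonempty only when $\epsilon>c/N$, sum Bourgain's kernel bound over the Dirichlet denominators $q\ge2$: the measure of the resonance set near each rational $a/q$ inside $I_{2}$ is $\les 1/(qN)$ and capped by $|I_{2}|\le\epsilon$, while the kernel is dominated by $(N/\sqrt{q})^{d}/(1+N|t-a/q|^{1/2})^{d}$. Integrating in $L^{q}_{t}$ and exploiting $|I_{2}|\le\epsilon$ produces a uniform gain $\epsilon^{\delta}$, yielding $N^{-1-\s}\norm{\chi_{I_{2}}v_{N}}_{L^{q}L^{r}}\les N^{d/2-1}\epsilon^{\delta}$; multiplication by $N^{1-d/2}$ gives $\epsilon^{\delta}\to 0$.

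For the second (Galilean-packet Besov-in-time) term, fix $R\le 8N$ and $\al\in[\s,1/p-\s]$ with $\be=\s+2\al$. Since $P_{\le 8R}I_{Rk}$ translates spatial frequency by $Rk$, it is nontrivial on $v_{N}$ only for $|k|\les N/R$, contributing $O((N/R)^{d})$ indices in the $\ell^{2}(k)$-sum. By the interpolation (\ref{eq:Besov real interp E s0s1}) in $\al$, it suffices to handle the two endpoints. The low-end $\al\to 0^{+}$ reduces, via $L^{p}_{t,x}\hook B^{0}_{p,\infty}L^{p}$ and the $L^{p}_{t,x}$-invariance of Galilean transforms, to the mixed-norm estimate from the previous paragraph. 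The high-end $\al\to(1/p)^{-}$ uses $\d_{t}v_{N}=i\De v_{N}$ to trade a time-derivative for a spatial Laplacian of size $\sim R^{2}$, absorbed by $R^{-\be}$. Combined with the decay prefactor $\jp{N/R}^{-\s}+R^{-2\s}$ against the $k$-count, the second term is $\les T^{-\delta}+\epsilon^{\delta}$ uniformly in $R,k,\al,N$.

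The hardest part will be the $I_{2}$ analysis of Step 2 and the $\al\to (1/p)^{-}$ endpoint of Step 3: converting Bourgain's pointwise-in-$t$ kernel bound, which is sharp only at resonant times $t\approx a/q$ with small $q$, into an averaged $L^{q}L^{r}$ bound with a uniform-in-$N$ $\epsilon^{\delta}$-gain, and reconciling the sharp temporal cutoff $\chi_{I}$ with the fractional Besov-in-time norm $B^{\al}_{p,1}L^{p}$ when applying $\d_{t}=i\De$ (naively producing boundary distributions at the endpoints of $I$). Compatibility of the cutoff with $Z^{s}$ is routed through (\ref{eq:step Z^s}), whose atomic/$V^{2}$-based proof ensures that temporal cutoffs commute with the Besov structure up to uniform constants. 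Combining all contributions yields $N^{1-d/2}\norm{\chi_{I}v_{N}}_{Z^{-1}}\les T^{-\delta}+\epsilon^{\delta}$ uniformly in $N$, and taking $\limsup_{T\to\infty,\epsilon\to 0}$ proves (\ref{eq:kernel 0}).
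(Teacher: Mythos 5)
Your plan --- a term-by-term direct estimate of the $Z^{-1}$ norm, splitting $I$ by Dirichlet major arcs and interpolating in the time-Besov exponent --- is genuinely different from the paper's approach, and you yourself flag that the hardest steps remain undone. The paper avoids this entirely by noticing that the inverse embedding machinery already does the work. Since $\chi_I e^{it\De}\delta_N$ is a single-scale function, $\norm{\cdot}_{Z^{-1}}=N^{-2}\norm{\cdot}_{Z^{1}}$ on it, so the claim is equivalent to $N^{-1-d/2}\norm{\chi_I e^{it\De}\delta_N}_{Z^1}\to 0$. These functions are supported in intervals of length tending to $0$, live at a single dyadic frequency $N\to\infty$ (forced since $TN^{-2}<\epsilon$), and have $N^{-1-d/2}$-normalized $Y^1$ norm bounded. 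Proposition \ref{prop:inverse embedding} then shows that the $Z^1$ norm of such a sequence cannot stay bounded away from zero unless the scale-normalized $L^\infty_{t,x}$ norm does. The lemma therefore reduces to
\[
\limsup_{\substack{\epsilon\to 0\\ T\to\infty}}\sup_{\substack{N\in 2^{\N}\\ TN^{-2}<\epsilon}} N^{-d}\norm{\chi_{[TN^{-2},\epsilon]}e^{it\De}\delta_N}_{L^\infty_{t,x}}=0,
\]
the classical pointwise extinction estimate of \cite{ionescu2012energy}, which follows from (\ref{eq:Bourgain bound}) alone. All of the $Z^{-1}$-norm machinery you attempt to control head-on is bypassed.

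Beyond being a harder route, your sketch has real gaps. The $I_2$ step asserts a uniform-in-$N$ gain $\epsilon^\delta$ from summing (\ref{eq:Bourgain bound}) over denominators $q\ge 2$, but this major-arc summation is not carried out, and it is not evident that the contributions near rationals of small denominator produce a uniform gain without further care. More fundamentally, the treatment of the $\al\to(1/p)^-$ endpoint does not go through as written: for the cutoff function $\chi_I v_N$ one has $(i\d_t+\De)(\chi_I v_N)=i\chi_I' v_N$, which carries boundary deltas at $t=TN^{-2}$ and $t=\epsilon$. These sit exactly at the $B^{1/p}_{p,\infty}$ regularity threshold, so ``trading $\d_t$ for $i\De\sim R^2$'' on the sharply truncated free evolution cannot yield a $B^{\al}_{p,1}$ bound as $\al\to 1/p$. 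Appealing to (\ref{eq:step Z^s}) does not close this gap either: that estimate gives boundedness of the $Z^s$ norm under temporal cutoffs, uniformly in $I$, but it provides no decay in $T$ or $\epsilon$, which is precisely what the lemma demands.
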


\begin{proof}
By the definition of $Z^{s}$, (\ref{eq:kernel 0}) can be rewritten
as
\[
\limsup_{\substack{\epsilon\rightarrow0\\
T\rightarrow\infty
}
}\sup_{\substack{N\in2^{\N}\\
TN^{-2}<\epsilon
}
}N^{-1-\frac{d}{2}}\norm{\chi_{[TN^{-2},\epsilon]}e^{it\De}\delta_{N}}_{Z^{1}}=0.
\]
Since the length of the interval $[TN^{-2},\epsilon]$ vanishes to
zero, by Proposition \ref{prop:inverse embedding}, it suffices to
show
\begin{equation}
\limsup_{\substack{\epsilon\rightarrow0\\
T\rightarrow\infty
}
}\sup_{\substack{N\in2^{\N}\\
TN^{-2}<\epsilon
}
}N^{-d}\norm{\chi_{[TN^{-2},\epsilon]}e^{it\De}\delta_{N}}_{L_{t,x}^{\infty}}=0,\label{eq:kernel 0 infty}
\end{equation}
which can be shown similar to \cite[(4.11)]{ionescu2012energy}; the
only key ingredient is (\ref{eq:Bourgain bound}).
\end{proof}
The next lemma states the weak scattering property on $\T^{d}$. For
later use in Proposition \ref{prop:amp}, in this version, we consider
general inhomogeneous evolutions, which are not necessarily solutions.
Due to the periodic resonance, we assume the time-convergence $t_{n}\rightarrow0$
of the frame and the negative-time linearity.
\begin{lem}
\label{lem:scatter T^d}Let $d\ge3$ and $C<\infty$. Let $\left\{ u_{n}\right\} $
be a bounded sequence in $C^{0}H^{1}(\R\times\T^{d})\cap Y^{1}$.
Let $\left\{ \mc O_{n}\right\} =\left\{ (N_{n},t_{n},x_{n})\right\} $
be a frame such that $t_{n}\rightarrow0$. If
\begin{equation}
\sup_{n}\norm{(i\d_{t}+\De)u_{n}}_{(Z^{-1})'}\le C\label{eq:scatter T^d f bound}
\end{equation}
and
\begin{equation}
u_{n}(t)=e^{it\De}u_{n}(0)\text{ holds for }t\le0,\label{eq:t<0 zero}
\end{equation}
then
\[
\left\{ e^{iT\De}\left(\iota_{\mc O_{n}}u_{n}\right)(-T)\right\} _{n\in\N}
\]
is uniformly convergent (in the weak sense) as $T\rightarrow\infty$.
\end{lem}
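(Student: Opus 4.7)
The plan is to mimic the proof of Lemma \ref{lem:scatter R^d}, replacing the $\R^d$-Strichartz-based bound with a $\T^d$-duality argument powered by the extinction lemma (\ref{eq:kernel 0}). Fix $\phi \in C_0^\infty(\R^d)$, write $\tilde u_n := \iota_{\mc O_n}u_n$, and note $(i\partial_t + \Delta_{\R^d})\tilde u_n = \iota'_{\mc O_n}f_n$, where $f_n := (i\partial_t + \Delta_{\T^d})u_n$. The integration-by-parts identity used in Lemma \ref{lem:scatter R^d} gives, for $0 < T_1 < T_2$,
\begin{equation*}
\jp{\phi, e^{iT_2\Delta}\tilde u_n(-T_2) - e^{iT_1\Delta}\tilde u_n(-T_1)}_{\R^d} = -i\int_{-T_2}^{-T_1}\jp{e^{it\Delta_{\R^d}}\phi,\, \iota'_{\mc O_n}f_n(t)}_{L^2(\R^d)}\,dt.
\end{equation*}
Changing variables $s = N_n^{-2}t + t_n$, $y = N_n^{-1}x + x_n$ and summing periodic copies in $y$ (since $f_n$ is $2\pi\Z^d$-periodic) recasts the right-hand side as $-i\jp{\Psi_n, f_n}_{L^2(\R\times\T^d)}$ with
\begin{equation*}
\Psi_n(s, y) \propto N_n^{1-d/2}\chi_{J_n}(s)\sum_{\xi\in\Z^d}\widehat{\phi}(N_n^{-1}\xi)\, e^{i\xi\cdot(y - x_n) - i(s - t_n)|\xi|^2},\ J_n := [t_n - N_n^{-2}T_2,\, t_n - N_n^{-2}T_1].
\end{equation*}
Up to the cutoff $\chi_{J_n}$, $\Psi_n$ is therefore a free $\T^d$-Schr\"odinger evolution of a bump at $y = x_n$ of scale $N_n^{-1}$, with source time $s = t_n$.

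Hypothesis (\ref{eq:t<0 zero}) forces $f_n \equiv 0$ on $\{s \le 0\}$, so $\Psi_n$ may be replaced by $\chi_{[0,\infty)}\Psi_n$. The resulting support $J_n \cap [0,\infty)$ is empty unless $t_n > N_n^{-2}T_1$, and otherwise is a subinterval of $[0, t_n]$ of length $\le t_n \to 0$ lying at distance at least $N_n^{-2}T_1$ below the source time. By duality and (\ref{eq:scatter T^d f bound}),
\begin{equation*}
\left|\jp{\phi, e^{iT_2\Delta}\tilde u_n(-T_2) - e^{iT_1\Delta}\tilde u_n(-T_1)}\right| \le C\,\|\chi_{[0,\infty)}\Psi_n\|_{Z^{-1}(\R\times\T^d)}.
\end{equation*}
It suffices to show this $Z^{-1}$-norm vanishes uniformly in $n$ as $T_1 \to \infty$.

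To this end, dyadically decompose $\phi = \sum_{M\in 2^{\Z}} P_M\phi$; the $M$-th piece contributes $\Psi_n^M$ with $\T^d$-spatial Fourier support in $\{|\xi|\sim MN_n\}$, representing a free $\T^d$-evolution of a bump of scale $(MN_n)^{-1}$. A direct computation gives $\|\Psi_n^M\|_{Y^{-1}}\les M^{d/2-1}\|P_M\phi\|_{L^2}$, which is summable in $M$ thanks to $\|P_M\phi\|_{L^2}\les M^{d/2}$ for $M\ll 1$ and the Schwartz decay of $\widehat\phi$ for $M\gg 1$. The main input is the extinction lemma: rescaled into the natural units $(MN_n)^{-2}$ of the frequency scale, the effective time window has offset $\ge M^2 T_1$ and total span $\le M^2 N_n^2 t_n$, so (\ref{eq:kernel 0}) applies with $N = MN_n$, $T = M^2 T_1$, $\epsilon = t_n$ (the compatibility $TN^{-2} < \epsilon$ reduces to $T_1 < N_n^2 t_n$, precisely the non-trivial regime). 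By time-reversal symmetry this gives the desired decay in backward time and uniformly for $M$ in any bounded range as $T_1 \to \infty$. Given $\epsilon' > 0$, one first selects $M_0 \ll 1 \ll M_1$ so that the dyadic tails $\sum_{M\notin [M_0, M_1]}$ are $< \epsilon'/2$ via the $Y^{-1}$-bound; the extinction lemma, applied uniformly over the finite range $M \in [M_0, M_1]$ and extended from the single kernel $\delta_N$ to the frequency-localized $\Psi_n^M$ by triangle inequality over Fourier modes, bounds the remainder by $< \epsilon'/2$ once $T_1$ is large.

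The main obstacle lies in this last step: the extinction lemma is stated only for the single kernel $e^{it\Delta}\delta_N$, so one must carefully adapt it to the superpositions $\Psi_n^M$, keeping all estimates uniform in $n$ across the dyadic scales in $M$. The delicate extreme is the low-frequency regime $M \ll 1$, where the dispersive decay from (\ref{eq:kernel 0}) is weakest and the control ultimately relies on the $Y^{-1}$-smallness of $\Psi_n^M$ combined with the shrinking effective time window.
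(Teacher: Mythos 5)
Your overall strategy --- Duhamel identity, duality against the $(Z^{-1})'$ bound, extinction lemma for the resulting kernel, exploiting (\ref{eq:t<0 zero}) to restrict the time integral --- is precisely the mechanism of the paper's proof. Where you diverge is in how the general test function $\phi$ is reduced to something the extinction lemma can handle, and this is where a genuine gap remains.

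You keep $\phi\in C_0^\infty(\R^d)$ throughout, dyadically decompose $\phi = \sum_M P_M\phi$, and then claim the extinction lemma (\ref{eq:kernel 0}) can be "extended from the single kernel $\delta_N$ to the frequency-localized $\Psi_n^M$ by triangle inequality over Fourier modes." This step, which you yourself flag as "the main obstacle," does not work as stated: the $Z^{-1}$-norm (and even the internal $L^\infty_{t,x}$-estimate (\ref{eq:kernel 0 infty}) on which the extinction lemma rests) is not a mode-by-mode norm, so summing over $\xi\in\Z^d$ is neither a valid triangle inequality for $Z^{-1}$ nor one that preserves the oscillatory cancellation that (\ref{eq:Bourgain bound}) exploits. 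Bounding $|\widehat\phi(N_n^{-1}\xi)|\les 1$ and "dominating" by $\delta_{MN_n}$ is likewise not a legitimate $Z^{-1}$ comparison, since the norm is not monotone in the Fourier coefficients. There is a fix in the spirit you seem to be reaching for --- write $\Psi_n$ as a convolution $\int\phi(z)\,e^{i(\cdot-t_n)\Delta}\delta_{\T^d}(\cdot-x_n-N_n^{-1}z)\,dz$, then decompose $\delta_{\T^d}=\sum_N\delta_N$ and use Minkowski's integral inequality together with translation invariance of $Z^{-1}$ --- but you did not write this, and the "over Fourier modes" phrasing suggests a different, incorrect argument. The paper avoids the issue entirely: it first reduces, by density of $\mathrm{span}\{\delta_{N_*}(\cdot-x_*)\}$ in the relevant topology (the pairing is controlled uniformly in $n$ by the $\dot H^1$-bound on $\iota_{\mc O_n}u_n$), to $\phi=\delta_{N_*}(\cdot-x_*)$, then absorbs $(N_*,x_*)$ into the frame so that the kernel in the duality step is exactly $\chi_{[\cdot,\cdot]\cap[-t_n,\infty)}e^{it\Delta}\delta_{N_n}(\cdot-x_n)$, i.e., precisely the object covered by Lemma \ref{prop:kernel 0}. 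If you adopt that density reduction (or carry out the convolution-average argument carefully, including the uniformity over the dyadic range of $M$ and the low-frequency regime $MN_n\les 1$), your proof closes.
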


\begin{proof}
Explicitly, we show that for every $\phi\in\mc S(\R^{d})$,
\begin{equation}
\left\{ \jp{\phi,e^{iT\De}\left(\iota_{\mc O_{n}}u_{n}\right)(-T)}_{L^{2}(\R^{d})}\right\} _{n\in\N}\label{eq:scatter T^d claim0}
\end{equation}
is uniformly convergent as $T\rightarrow\infty$. Since the span of
$\left\{ \delta_{N_{*}}(\cdot-x_{*}):N_{*}\in2^{\Z}\text{ and }x_{*}\in\R^{d}\right\} $
is dense in $\mc S(\R^{d})$, one may assume further that 
\[
\phi=\delta_{N_{*}}(\cdot-x_{*}),\qquad N_{*}\in2^{\Z}\text{ and }x_{*}\in\R^{d}.
\]
Up to a comparable choice of the frame $\mc{\tilde O}_{n}=\left\{ (N_{n}N_{*}^{-1},t_{n},x_{n}+x_{*})\right\} $,
we can further reduce to the case $(N_{*},x_{*})=(1,0)$. Since $-T$
and $P_{1;\R^{d}}$ in the $\mc O_{n}$-coordinates correspond to
$t_{n}-TN_{n}^{-2}$ and $P_{N_{n};\T^{d}}$, respectively, in the
original domain, it suffices to show the uniform convergence as $T\rightarrow\infty$
of
\[
\left\{ \jp{\delta_{N_{n}}(\cdot-x_{n}),N_{n}^{1-\frac{d}{2}}e^{iTN_{n}^{-2}\De}u_{n}(t_{n}-TN_{n}^{-2})}_{L^{2}(\T^{d})}\right\} _{n\in\N}.
\]
We show the uniform convergence. For $n\in\N$ and $T_{1},T_{2}>0$,
denoting $f_{n}=(i\d_{t}+\De)u_{n}$, we have
\begin{align}
 & \left|\jp{\delta_{N_{n}}(\cdot-x_{n}),e^{iT_{1}N_{n}^{-2}\De}u_{n}(t_{n}-T_{1}N_{n}^{-2})-e^{iT_{2}N_{n}^{-2}\De}u_{n}(t_{n}-T_{2}N_{n}^{-2})}_{L^{2}(\T^{d})}\right|\nonumber \\
 & =\left|\jp{\delta_{N_{n}}(\cdot-x_{n}),\int_{t_{n}-T_{1}N_{n}^{-2}}^{t_{n}-T_{2}N_{n}^{-2}}e^{i(t_{n}-s)\De}f_{n}(s)ds}_{L^{2}(\T^{d})}\right|\nonumber \\
 & =\left|\int_{t_{n}-T_{1}N_{n}^{-2}}^{t_{n}-T_{2}N_{n}^{-2}}\jp{\delta_{N_{n}}(\cdot-x_{n}),e^{i(t_{n}-s)\De}f_{n}(s)ds}_{L^{2}(\T^{d})}\right|\nonumber \\
 & =\left|\int_{t_{n}-T_{1}N_{n}^{-2}}^{t_{n}-T_{2}N_{n}^{-2}}\jp{e^{i(s-t_{n})\De}\delta_{N_{n}}(\cdot-x_{n}),f_{n}(s)ds}_{L^{2}(\T^{d})}\right|\nonumber \\
 & \les\norm{\chi_{[-T_{1}N_{n}^{-2},-T_{2}N_{n}^{-2}]\cap[-t_{n},\infty)}e^{it\De}\delta_{N_{n}}(\cdot-x_{n})}_{Z^{-1}}\norm{f_{n}}_{(Z^{-1})'},\label{eq:scatter T^d last line}
\end{align}
where the restriction $[-t_{n},\infty)$ in (\ref{eq:scatter T^d last line})
is obtained from the condition (\ref{eq:t<0 zero}). Thus, we have
\begin{align*}
 & \left|N_{n}^{1-\frac{d}{2}}\jp{\delta_{N_{n}}(\cdot-x_{n}),e^{iT_{1}N_{n}^{-2}\De}u_{n}(t_{n}-T_{1}N_{n}^{-2})-e^{iT_{2}N_{n}^{-2}\De}u_{n}(t_{n}-T_{2}N_{n}^{-2})}_{L^{2}(\T^{d})}\right|\\
 & \les N_{n}^{1-\frac{d}{2}}\norm{\chi_{[-T_{1}N_{n}^{-2},-T_{2}N_{n}^{-2}]\cap[-t_{n},\infty)}e^{it\De}\delta_{N_{n}}(\cdot-x_{n})}_{Z^{-1}}\norm{f_{n}}_{(Z^{-1})'},
\end{align*}
which vanishes as $T_{1},T_{2}\rightarrow\infty$ by (\ref{eq:kernel 0})
and (\ref{eq:scatter T^d f bound}), finishing the proof.
\end{proof}

\subsection{\label{subsec:nonlinear conc}A nonlinear inverse property}

In this subsection, we show an inverse property of solutions to (\ref{eq:NLS T^d}).
The main difficulty is that the linear inverse property Proposition
\ref{prop:inverse embedding} is frequency-localized and does not
rule out functions split into small-amplitude bubbles of different
sizes. Although using strong multilinear estimates is a conventional
approach, such estimates have not been shown for the nonlinearity
when $d\ge5$.

Without a multilinear estimate, the following key proposition upgrades
Proposition \ref{prop:inverse embedding} to the concentration of
a nonlinear flow:
\begin{prop}
\label{prop:amp}Let $d\ge3$. Let $T_{n}\rightarrow0$ be a positive
sequence. Denote $I_{n}=[0,T_{n})$. Let $C<\infty$ and $\epsilon>0$.
Let $\left\{ u_{n}\right\} $ be a sequence of Duhamel solutions to
(\ref{eq:NLS T^d}) in $C^{0}H^{1}([0,T_{n}]\times\T^{d})\cap Y^{1}(I_{n})$.
Assume that
\begin{equation}
\sup_{n}\norm{u_{n}}_{Y^{1}(I_{n})}\le C\label{eq:conc condt1}
\end{equation}
and
\begin{equation}
\inf_{n}\norm{\chi_{I_{n}}\left(u_{n}-e^{it\De}u_{n}(0)\right)}_{Z^{1}}\ge\epsilon.\label{eq:conc condt3}
\end{equation}
Then, there exists a frame $\left\{ \mc O_{n}\right\} =\left\{ (N_{n},t_{n},x_{n})\right\} $
such that $t_{n}\in I_{n}$ and both
\[
\left\{ \chi_{\tilde{I_{n}}}\right\} \text{ and }\left\{ \iota_{\mc O_{n}}\left(u_{n}-e^{it\De}u_{n}(0)\right)(0)\right\} 
\]
are weakly nonzero, where $\tilde{I_{n}}=\left\{ N_{n}^{2}(t-t_{n})\mid t\in I_{n}\right\} \subset\R$
is the time interval mapped from $I_{n}$ by $\mc O_{n}$.
\end{prop}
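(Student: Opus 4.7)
Set $w_n := u_n - e^{it\Delta} u_n(0)$ and $v_n := \chi_{I_n} w_n$. On $I_n$, $w_n = K^+[\chi_{I_n}\mc N(u_n)]$; the hypothesis (\ref{eq:conc condt3}) reads $\|v_n\|_{Z^1} \ge \epsilon$, while the boundedness of sharp temporal cutoffs on $V_{\rc}^2$ (hence on $Y^1$) together with (\ref{eq:conc condt1}) gives $\|v_n\|_{Y^1} \lesssim 1$. The strategy is to select a dominant dyadic scale $N_n$ and then reduce both weak-nonvanishing claims to a single application of the linear inverse embedding (Proposition \ref{prop:inverse embedding}) to $\{P_{N_n} v_n\}$.

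\textbf{Main steps.} Step A: produce, up to a subsequence, $N_n \to \infty$ with $\|P_{N_n} v_n\|_{Z^1} \gtrsim 1$ and the scale relation $N_n^2 T_n \gtrsim 1$. Step B: apply Proposition \ref{prop:inverse embedding} to $\{P_{N_n} v_n\}$, whose temporal support sits in the shrinking intervals $I_n$, to obtain $(t_n, x_n) \in I_n \times \T^d$ such that $\{\iota_{(N_n, t_n, x_n)} P_{N_n} v_n(0)\}$ is weakly nonzero. Because the rescaling $\iota_{\mc O_n}$ with $\mc O_n = (N_n,t_n,x_n)$ intertwines the spatial projector $P_{N_n}$ on $\T^d$ with a fixed smooth frequency cutoff at scale $1$ on $\R^d$, the full sequence $\iota_{\mc O_n} v_n(0) = \iota_{\mc O_n}(u_n - e^{it\Delta} u_n(0))(0)$ must also be weakly nonzero, which is the second claimed conclusion. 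Step C: from $N_n^2 T_n \gtrsim 1$ and $t_n \in I_n$, the rescaled interval $\tilde I_n = [-N_n^2 t_n, N_n^2 (T_n - t_n))$ has length bounded below; along a further subsequence $\chi_{\tilde I_n}$ converges weakly to a nontrivial indicator (either of a bounded interval or of the whole line), giving the first conclusion.

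\textbf{Main obstacle.} The crux is Step A. A naive $\ell^2$ pigeonhole on $\sum_N \|P_N v_n\|_{Z^1}^2 \ge \epsilon^2$ does not suffice: the $Y^1$ bound only controls $\sum_N \|P_N v_n\|_{Y^1}^2 \lesssim 1$ and so does not preclude $v_n$ from splitting into arbitrarily many small-amplitude bubbles at distinct dyadic scales --- exactly the multi-bubble scenario flagged in the introduction. To overcome this I would exploit the nonlinear Duhamel structure $v_n = K^+[\chi_{I_n}\mc N(u_n)]$ (on $I_n$): for dyadic $N$ with $N^2 T_n \ll 1$, the retarded kernel $\chi_{I_n} e^{it\Delta}$ localized at frequency $N$ has vanishing dual $Z^{-1}$ norm (an extinction-type estimate in the spirit of Lemma \ref{prop:kernel 0}), so such frequencies contribute only negligibly to $\|v_n\|_{Z^1}$. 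This localizes the $\epsilon$-content of $\|v_n\|_{Z^1}$ to scales $N \gtrsim T_n^{-1/2}$, and the $Y^1$ summability then limits the effective number of active dyadic scales, whereupon ordinary pigeonhole supplies $N_n$ with the required uniform lower bound and the scale matching $N_n^2 T_n \gtrsim 1$. This is the precise place where the heuristic from the introduction --- "a maximum-amplitude bubble of the nonlinear flow evolves like a free wave" --- is quantified.
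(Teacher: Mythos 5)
Your Steps B and C are sound (and indeed agree with the paper: the $\iota_{\mc O_n}$ intertwining of $P_{N_n}$ with a fixed smooth cutoff, and the observation that $0\in\tilde I_n$ so $|\tilde I_n|\gtrsim 1$ gives weak nonvanishing, are both used). The gap is exactly where you flag it — Step A — and your proposed repair does not close it.

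The extinction-plus-pigeonhole idea fails for a counting reason. Even granting that frequencies with $N^2T_n\ll 1$ contribute negligibly to $\|\chi_{I_n}v_n\|_{Z^1}$, the remaining window $N\gtrsim T_n^{-1/2}$ still contains an unbounded number of dyadic scales as $T_n\to 0$, so the $Y^1$ summability $\sum_N\|P_Nv_n\|_{Y^1}^2\lesssim 1$ does \emph{not} limit the number of active scales: $v_n$ can split into $K_n\to\infty$ bubbles at distinct high scales, each carrying $Z^1$-mass $\sim\epsilon/\sqrt{K_n}$, with the $Y^1$ bound perfectly intact. An $\ell^2$ pigeonhole then produces $N_n$ with amplitude $\alpha_n:=\|\chi_{I_n}P_{N_n}v_n\|_{Z^1}$ tending to zero, and there is no uniform lower bound and no guaranteed scale relation $N_n^2T_n\gtrsim 1$. (The extinction lemma, Lemma \ref{prop:kernel 0}, is also not really the right tool for the smallness you want: it controls the kernel away from $t=0$, and it is used in the paper inside the proof of Lemma \ref{lem:scatter T^d}, not as a low-frequency cutoff for the Duhamel operator.)

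The paper's proof is structurally different at this point and this is the heart of the proposition. It does \emph{not} establish $\alpha_n\gtrsim 1$ or $N_n^2T_n\gtrsim 1$. Instead it pigeonholes against a larger $\ell^2$-summable family — not only $\|P_Nv_n\|_{Y^1}^2$, but also $\|\chi_{[-1,1)}P_N\partial_t v_n\|^2_{L^\infty H^{-1}+L^2}$, $\|\chi_{I_n}P_N\mc N(u_n)\|^2_{(Z^{-1})'}$, and the scale-break quantities $N^{2\delta}\|P_{\ge N}u_n\|_{Y^{1-\delta}}^2$ and $N^{-2\delta}\|P_{\le N}u_n\|_{Y^{1+\delta}}^2$ — to obtain a frequency $N_n$ at which $\alpha_n$ \emph{dominates} all these terms, without any absolute lower bound. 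Then, assuming for contradiction that $\alpha_n\to 0$ or $\chi_{\tilde I_n}\rightharpoonup 0$, it renormalizes by $\alpha_n^{-1}$, uses the scale-break bounds to control $\alpha_n^{-1-a}\iota'_{\mc O_n}(P_{N_n}\mc N(u_n))$ in $L^\infty L^1_{\loc}$, and shows the renormalized weak limit $v_*$ is a nonzero \emph{free} evolution. Finally, the uniform weak scattering lemma on $\T^d$ (Lemma \ref{lem:scatter T^d}, which crucially uses that $v_n\equiv 0$ for $t<0$ and the bound on $\mc N(u_n)$ in $(Z^{-1})'$) forces the backward scattering limit of $v_*$ to be zero, a contradiction. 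Your outline makes no use of Lemma \ref{lem:scatter T^d}, no use of the scale-break terms, and no contradiction argument; these are precisely the ingredients needed to evade the multi-bubble scenario that your extinction step cannot.
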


\begin{proof}
Since the statement concerns only the time interval $I_{n}$, one
can equivalently regard $u_{n}$ as the cutoff solution $(u_{n},I_{n})$
to (\ref{eq:NLS T^d}). Let $v_{n}:=u_{n}-e^{it\De}u_{n}(0)$. For
$n\in\N$, (\ref{eq:conc condt3}) can be rewritten as
\begin{equation}
\sum_{N}\norm{\chi_{I_{n}}P_{N}v_{n}}_{Z^{1}}^{2}=\norm{\chi_{I_{n}}v_{n}}_{Z^{1}}^{2}\gtrsim1.\label{eq:r1}
\end{equation}
By (\ref{eq:conc condt1}), we have
\begin{equation}
\sum_{N}\norm{P_{N}v_{n}}_{Y^{1}}^{2}\les_{C}1.\label{eq:r2}
\end{equation}
By using (\ref{eq:cube Strichartz}) and $\norm{\mc N(u_{n})}_{L_{t,x}^{2}}\les\norm{u_{n}}_{L_{t,x}^{\frac{2}{1+a}}}^{1+a}=\norm{u_{n}}_{L_{t,x}^{\frac{2d+4}{d-2}}}^{1+a}$,
we have
\begin{align}
\sum_{N}\norm{\chi_{[-1,1)}P_{N}\d_{t}v_{n}}_{L^{\infty}H^{-1}+L_{t,x}^{2}}^{2} & =\norm{\chi_{[-1,1)}i\d_{t}v_{n}}_{\ell^{2}\left(L^{\infty}H^{-1}+L_{t,x}^{2}\right)}^{2}\label{eq:r2'}\\
 & \les\norm{\De v_{n}}_{\ell^{2}(L^{\infty}H^{-1})}^{2}+\norm{\chi_{[-1,1)}\mc N(u_{n})}_{L_{t,x}^{2}}^{2}\nonumber \\
 & \les\norm{v_{n}}_{\ell^{2}L^{\infty}H^{1}}^{2}+\norm{\chi_{[-1,1)}u_{n}}_{L_{t,x}^{\frac{2d+4}{d-2}}}^{2(1+a)}\nonumber \\
 & \les\norm{v_{n}}_{Y^{1}}^{2}+\norm{u_{n}}_{Y^{1}}^{2(1+a)}\les_{C}1.\nonumber 
\end{align}
By (\ref{eq:Nu bound}) and (\ref{eq:conc condt1}), we have
\begin{equation}
\sum_{N}\norm{\chi_{I_{n}}P_{N}\mc N(u_{n})}_{\left(Z^{-1}\right)'}^{2}=\norm{\chi_{I_{n}}\mc N(u_{n})}_{\left(Z^{-1}\right)'}^{2}\les\norm{u_{n}}_{Y^{1}}^{2(1+a)}\les_{C}1.\label{eq:r3}
\end{equation}
Let $\delta$ be a small number, e.g., $\delta=1/10$. We have
\begin{equation}
\sum_{N}N^{2\delta}\norm{P_{\ge N}u_{n}}_{Y^{1-\delta}}^{2}\les\sum_{\substack{N,K\\
K\ge N
}
}\left(N/K\right)^{2\delta}\norm{P_{K}u_{n}}_{Y^{1}}^{2}\les\sum_{K}\norm{P_{K}u_{n}}_{Y^{1}}^{2}\les\norm{u_{n}}_{Y^{1}}^{2}\les_{C}1\label{eq:r4}
\end{equation}
and similarly
\begin{equation}
\sum_{N}N^{-2\delta}\norm{P_{\le N}u_{n}}_{Y^{1+\delta}}^{2}\les_{C}1.\label{eq:r4'}
\end{equation}
Taking a sum of all the inequalities (\ref{eq:r2}), (\ref{eq:r2'}),
(\ref{eq:r3}), (\ref{eq:r4}), (\ref{eq:r4'}), and (\ref{eq:r1}),
we have
\begin{align*}
 & \sum_{N}\norm{\chi_{I_{n}}P_{N}v_{n}}_{Z^{1}}^{2}\gtrsim1\\
 & \gtrsim\sum_{N}\norm{P_{N}v_{n}}_{Y^{1}}^{2}+\norm{\chi_{[-1,1)}P_{N}\d_{t}v_{n}}_{L^{\infty}H^{-1}+L_{t,x}^{2}}^{2}+\norm{\chi_{I_{n}}P_{N}\mc N(u_{n})}_{\left(Z^{-1}\right)'}^{2}\\
 & +N^{2\delta}\norm{P_{\ge N}u_{n}}_{Y^{1-\delta}}^{2}+N^{-2\delta}\norm{P_{\le N}u_{n}}_{Y^{1+\delta}}^{2},
\end{align*}
thus there exists $N_{n}\in2^{\N}$ such that
\begin{align}
\norm{\chi_{I_{n}}P_{N_{n}}v_{n}}_{Z^{1}} & \gtrsim\norm{P_{N_{n}}v_{n}}_{Y^{1}}\label{eq:rbound Y1}\\
 & +\norm{\chi_{[-1,1)}P_{N_{n}}\d_{t}v_{n}}_{L^{\infty}H^{-1}+L_{t,x}^{2}}\label{eq:rbound dt}\\
 & +\norm{\chi_{I_{n}}P_{N_{n}}\mc N(u_{n})}_{\left(Z^{-1}\right)'}\label{eq:rbound Nu}\\
 & +N_{n}^{\delta}\norm{P_{\ge N_{n}}u_{n}}_{Y^{1-\delta}}+N_{n}^{-\delta}\norm{P_{\le N_{n}}u_{n}}_{Y^{1+\delta}}.\label{eq:rbound scale break}
\end{align}
Denote $\al_{n}:=\norm{\chi_{I_{n}}P_{N_{n}}v_{n}}_{Z^{1}}$. By (\ref{eq:rbound Y1}),
we have $\norm{\chi_{I_{n}}\al_{n}^{-1}P_{N_{n}}v_{n}}_{Z^{1}}=1\gtrsim\norm{\al_{n}^{-1}P_{N_{n}}v_{n}}_{Y^{1}}$,
thus by Proposition \ref{prop:inverse embedding}, there exists a
frame $\left\{ \mc O_{n}\right\} =\left\{ (N_{n},t_{n},x_{n})\right\} $
with $t_{n}\in I_{n}$ such that $\left\{ \iota_{\mc O_{n}}\left(\al_{n}^{-1}P_{N_{n}}v_{n}\right)(0)\right\} $
is weakly nonzero. Passing to a subsequence, assume that
\begin{equation}
\iota_{\mc O_{n}}\left(\al_{n}^{-1}P_{N_{n}}v_{n}\right)\weak v_{*}.\label{eq:weak to v*}
\end{equation}
By (\ref{eq:rbound dt}), $\left\{ \iota_{\mc O_{n}}\left(\al_{n}^{-1}P_{N_{n}}v_{n}\right)\right\} $
is equicontinuous in $H_{\loc}^{-1}(\R^{d})$, thus we may assume
further that for every $T\in\R$,
\[
\iota_{\mc O_{n}}\left(\al_{n}^{-1}P_{N_{n}}v_{n}\right)(T)\weak v_{*}(T).
\]
In particular, since $\left\{ \iota_{\mc O_{n}}\left(\al_{n}^{-1}P_{N_{n}}v_{n}\right)(0)\right\} $
is weakly nonzero, we have $v_{*}(0)\neq0$.

If $\inf\al_{n}>0$ and $\left\{ \chi_{\tilde{I_{n}}}\right\} $ is
weakly nonzero, then 
\[
P_{1}\left(\iota_{\mc O_{n}}v_{n}\right)(0)=\al_{n}\cdot\iota_{\mc O_{n}}\left(\al_{n}^{-1}P_{N_{n}}v_{n}\right)(0)
\]
is also weakly nonzero thus there is nothing to prove. Thus, passing
to a subsequence, we assume either $\al_{n}\rightarrow0$ or $\chi_{\tilde{I_{n}}}\weak0$.
In the rest of this proof, we show a contradiction. By (\ref{eq:rbound scale break}),
we have
\begin{align*}
\al_{n}=\norm{\chi_{I_{n}}P_{N_{n}}v_{n}}_{Z^{1}} & \gtrsim N_{n}^{\delta}\norm{P_{\ge N_{n}}u_{n}}_{Y^{1-\delta}}+N_{n}^{-\delta}\norm{P_{\le N_{n}}u_{n}}_{Y^{1+\delta}}\\
 & \gtrsim N_{n}^{\delta}\norm{P_{\ge N_{n}}u_{n}}_{L^{\infty}L^{2_{1-\delta}}}+N_{n}^{-\delta}\norm{P_{\le N_{n}}u_{n}}_{L^{\infty}L^{2_{1+\delta}}}
\end{align*}
hence $\al_{n}^{-1}\iota_{\mc O_{n}}u_{n}$ is bounded in $L^{\infty}(L^{2_{1-\delta}}+L^{2_{1+\delta}})_{\loc}\hook L^{\infty}L_{\loc}^{1+a}$,
where we used $2_{1+\delta}>2_{1-\delta}>\frac{d+2}{d-2}=1+a$. Thus,
\[
\al_{n}^{-1-a}\iota'_{\mc O_{n}}\left(P_{N_{n}}\mc N(u_{n})\right)=\al_{n}^{-1-a}P_{1}\mc N(\iota_{\mc O_{n}}u_{n})
\]
is bounded in $L^{\infty}L_{\loc}^{1}$, then since we assumed either
$\al_{n}\rightarrow0$ or $\chi_{\tilde{I_{n}}}\weak0$, we have
\begin{align*}
(i\d_{t}+\De)\iota_{\mc O_{n}}\left(\al_{n}^{-1}P_{N_{n}}v_{n}\right) & =\iota'_{\mc O_{n}}\left(\al_{n}^{-1}\chi_{I_{n}}P_{N_{n}}\mc N(u_{n})\right)\\
 & =\al_{n}^{a}\chi_{\tilde{I_{n}}}\cdot\al_{n}^{-1-a}\iota'_{\mc O_{n}}\left(P_{N_{n}}\mc N(u_{n})\right)\weak0.
\end{align*}
Thus, $v_{*}$, which is the weak limit of $\iota_{\mc O_{n}}\left(\al_{n}^{-1}P_{N_{n}}v_{n}\right)$,
is a distributional solution to $(i\d_{t}+\De)v_{*}=0$. By (\ref{eq:rbound Y1})
and the embedding $Y^{1}\hook L^{\infty}H^{1}$, we have $v_{*}\in L^{\infty}\dot{H}^{1}$.
Thus, the spatial Fourier transform can be applied to $v_{*}$, from
which we obtain that $v_{*}$ is a (Duhamel) free evolution almost
everywhere. By Lemma \ref{lem:scatter T^d} and (\ref{eq:rbound Nu}),
we have
\[
\lim_{T\rightarrow\infty}\lim_{n\rightarrow\infty}e^{iT\De}\iota_{\mc O_{n}}\left(\al_{n}^{-1}P_{N_{n}}v_{n}\right)(-T)=\lim_{n\rightarrow\infty}\lim_{T\rightarrow\infty}e^{iT\De}\iota_{\mc O_{n}}\left(\al_{n}^{-1}P_{N_{n}}v_{n}\right)(-T),
\]
the left-hand side of which equals to $\lim_{T\rightarrow\infty}e^{iT\De}v_{*}(-T)$,
while the right-hand side of which equals to $0$ since $v_{n}(t)=0$
holds for $t<0$. This contradicts that $v_{*}$ is a free evolution
and finishes the proof.
\end{proof}

\subsection{\label{sec:Proof-of-Theorem}Proof of Theorem \ref{thm:defocusing GWP}
and Theorem \ref{thm:focusing GWP}}

In this section, we prove Theorem \ref{thm:defocusing GWP} and Theorem
\ref{thm:focusing GWP}. Indeed, the proofs are identical; for simplicity,
we prove only the defocusing case, Theorem \ref{thm:defocusing GWP}.
\begin{prop}
\label{prop:main}Let $d\ge3$ and $E_{0}<\infty$. There is no sequence
$u_{n}\in C^{0}H^{1}\cap Y^{1}([0,T_{n,\max}))$ of solutions to (\ref{eq:NLS T^d})
with the defocusing sign and maximal positive lifespans $T_{n,\max}\rightarrow0$
such that $E(u_{n}(0))\le E_{0}$ and $\norm{u_{n}}_{Z^{1}}=\infty$.
\end{prop}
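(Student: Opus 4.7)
The plan is a proof by contradiction. Suppose such a sequence $u_n$ exists. By local well-posedness (Proposition \ref{prop:LWP}) together with the blow-up criterion $\|u_n\|_{Z^1} = \infty$, I first select for each $n$ a subinterval $I_n = [0, T_n] \subset [0, T_{n,\max})$ with $T_n \to 0$ on which $\|u_n\|_{Y^1(I_n)}$ is uniformly bounded yet the $Z^1$-norm of the Duhamel correction $v_n := u_n - e^{it\Delta}u_n(0)$ is pinned at some fixed positive value $\epsilon$ (for instance, by taking $T_n$ to be the first time this $Z^1$-norm reaches $\epsilon$). I then extend each $u_n$ by its linear evolution on $(-\infty,0]$, making $u_n$ a cutoff solution on $\R\times\T^d$. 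Proposition \ref{prop:amp} then furnishes, along a subsequence, a frame $\mc O_n = (N_n, t_n, x_n)$ with $t_n \in I_n$ (so $t_n \to 0$ and $N_n \to \infty$) such that both $\chi_{\tilde I_n}$ and $\iota_{\mc O_n}v_n(0)$ are weakly nonzero; write $\chi_{\tilde I_n}\weak\chi_{I_*}$ on a nontrivial interval $I_* \subset \R$ and $\iota_{\mc O_n}v_n(0)\weak w_* \neq 0$. Applying Lemma \ref{lem:weak lim is sol} to the periodic extensions $\iota_{\mc O_n}u_n$ viewed as cutoff solutions of (\ref{eq:NLS R^d}), I may further assume $\iota_{\mc O_n}u_n \weak u_*$ for a cutoff solution $u_*$ of (\ref{eq:NLS R^d}) which, by Proposition \ref{prop:defocusing R^d}, is global, lies in $C^0\dot{H}^1\cap L^{(2d+4)/(d-2)}_{t,x}$, and scatters in both time directions. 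Writing $\Phi := \lim\iota_{\mc O_n}(e^{it\Delta}u_n(0))$, a linear $\R^d$-evolution, one then has $v_* := \lim\iota_{\mc O_n}v_n = u_* - \Phi$ and $v_*(0) = w_* \neq 0$.

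Next I apply Lemma \ref{lem:scatter T^d} to $v_n$; its hypotheses hold because $v_n$ is uniformly bounded in $Y^1$, the source $(i\d_t+\Delta)v_n = \chi_{I_n}\mc N(u_n)$ is uniformly bounded in $(Z^{-1})'$ by Lemma \ref{lem:Nu bound}, and $v_n \equiv 0$ on $(-\infty,0]$ (so the linearity condition is trivial). The conclusion is the uniform-in-$n$ weak convergence of $e^{iT\Delta}\iota_{\mc O_n}v_n(-T)$ as $T \to \infty$. This is the \emph{triple weak limit} viewed in two ways: for each fixed $n$, once $T > t_n N_n^2$ the rescaled time $-T$ sits in the linear (zero) regime of $v_n$, so $e^{iT\Delta}\iota_{\mc O_n}v_n(-T) = 0$ and its $T\to\infty$ limit vanishes for each $n$; for each fixed $T$, the $n\to\infty$ weak limit equals $e^{iT\Delta}v_*(-T)$. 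Exchanging these limits (legitimate by the uniform convergence) forces $\lim_{T\to\infty} e^{iT\Delta}v_*(-T) = 0$ weakly, i.e., $v_*$ has zero backward scattering data.

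A short case analysis on $t_n N_n^2 \in [0,\infty]$ then closes the argument. If $t_n N_n^2$ remains bounded along the subsequence, then for every fixed $t < 0$ the function $\iota_{\mc O_n}v_n(t)$ is eventually zero, so $v_* \equiv 0$ on $(-\infty, 0)$; the weak continuity of $v_*$ in $t$ (from equicontinuity of $\d_t \iota_{\mc O_n}v_n$ in $L^\infty H^{-1}_{\loc} + L^2_{t,x}$) forces $v_*(0) = 0$, contradicting $w_* \neq 0$. If instead $t_n N_n^2 \to \infty$, then $\Phi \equiv 0$ by linear Schr\"odinger dispersion: the rescaled evolution $e^{i(t + t_n N_n^2)\Delta}\Phi_n$ of the $\dot H^1$-bounded rescaled initial data $\Phi_n := N_n^{1-d/2} u_n(0)(N_n^{-1}\cdot + x_n)$ converges weakly to zero for each fixed $t$. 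Hence $v_* = u_*$, and since $u_*$ is a global scattering solution of (\ref{eq:NLS R^d}) with zero backward scattering profile, uniqueness forces $u_* \equiv 0$, again contradicting $v_*(0) \neq 0$. The main obstacle I expect is the opening step — arranging $I_n$ and the linear extension so that Proposition \ref{prop:amp} and Lemma \ref{lem:scatter T^d} can be simultaneously applied to the same sequence with compatible bounds, in particular preserving $Y^1$-control under the truncation while keeping $\|\chi_{I_n} v_n\|_{Z^1}$ bounded below; once this is arranged, the two-perspective weak-limit argument closes the proof cleanly.
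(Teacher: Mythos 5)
Your proposal captures several of the paper's key moves (the choice of a stopping time interval, the application of Propositions \ref{prop:amp} and \ref{lem:weak lim is sol}, and the exchange of limits via Lemma \ref{lem:scatter T^d}), but the final contradiction argument does not close. Two concrete gaps appear in the case analysis on $t_n N_n^2$.

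First, in the case where $t_n N_n^2$ stays bounded, your claim that ``for every fixed $t<0$ the function $\iota_{\mc O_n}v_n(t)$ is eventually zero'' is only valid when $t_n N_n^2\to 0$. If instead $t_n N_n^2\to M\in(0,\infty)$, then for $t\in(-M,0)$ the rescaled time $N_n^{-2}t+t_n$ sits inside $I_n$, so $\iota_{\mc O_n}v_n(t)$ need not vanish, and you only obtain $v_*\equiv 0$ on $(-\infty,-M]$. Since $v_*$ solves the \emph{inhomogeneous} linear equation $(i\d_t+\De)v_*=\chi_{I_*}\mc N(u_*)$ (it is a Duhamel correction, not a solution of NLS), vanishing on $(-\infty,-M]$ together with zero backward scattering data puts no constraint on $v_*(0)$; there is no uniqueness principle to invoke, and the case closes nothing. (Note, in fact, that the conclusion of Proposition \ref{prop:amp} already forces $t_n N_n^2\not\to 0$, since otherwise $\iota_{\mc O_n}v_n(0)\weak v_*(0)=0$; so the subcase you \emph{can} handle is vacuous.)

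Second, in the case $t_n N_n^2\to\infty$, the claim that $\Phi\equiv 0$ ``by linear Schr\"odinger dispersion'' is unjustified. The function $\Phi_n=N_n^{1-d/2}u_n(0)(N_n^{-1}\cdot+x_n)$ is the rescaled \emph{periodic extension} of $u_n(0)$; it is not in $\dot{H}^1(\R^d)$ (a nonzero periodic function has infinite $\dot H^1(\R^d)$ norm), only uniformly \emph{locally} bounded. On the torus the free evolution is time-periodic, so there is no dispersive decay as the rescaled time $t_n N_n^2\to\infty$; the extinction lemma (Lemma \ref{prop:kernel 0}) gives kernel decay in $Z^{-1}$ for the localized Dirac $\delta_N$, but that does not translate into $\Phi=0$ here, and indeed a straightforward frequency-localized size estimate gives only an $O(1)$ bound on $\jp{\phi,\iota_{\mc O_n}(e^{it\De}u_n(0))(T)}$, not $o(1)$. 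One should expect $\Phi\neq 0$ to be possible.

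The paper's proof avoids having to rule out $\Phi\neq 0$ entirely. Instead of a single stopping interval, it introduces \emph{infinitely many} disjoint intervals $I_{n,j}=[T_{n,j-1},T_{n,j})$ on each of which the $Z^1$-norm of the Duhamel correction is pinned at $1$, applies Proposition \ref{prop:amp} to obtain frames $\mc O_{n,j}$, and evaluates the triple weak limit $\lim_k\lim_n\lim_T e^{iT\De}(\iota_{\mc O_{n,j_k}}u_{n,\le j_k})(-T)$ (with the \emph{full} $u$, not its Duhamel correction). Read via Lemma \ref{lem:scatter T^d} and \ref{lem:scatter R^d}, it equals the nonzero backward scattering datum of the weak limit $v_*$; read via the free-evolution structure for $t<0$, it equals $\lim_k\lim_n\iota_{\mc O_{n,j_k}}(e^{it\De}u_n(0))(0)$. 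This shows the linear part is weakly \emph{nonzero} along the sequence $j_k$, and combined with the weak nonzeroness of $\chi_{\tilde I_{n,j_k}}$, yields a uniform lower bound $\liminf_n\|\chi_{I_{n,j_k}}e^{it\De}u_n(0)\|_{L^{(2d+4)/(d-2)}_{t,x}}>0$ for each $k$. Since the intervals $I_{n,j}$ are disjoint, Fatou gives $\liminf_n\|e^{it\De}u_n(0)\|_{L^{(2d+4)/(d-2)}_{t,x}([0,1)\times\T^d)}=\infty$, contradicting the Strichartz estimate. This summation over infinitely many intervals is the crucial structural idea your single-interval approach misses; without it, neither $\Phi=0$ nor $v_*(0)=0$ can be derived.
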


Proposition \ref{prop:main} is actually stronger than Theorem \ref{thm:defocusing GWP}.
Once we show Proposition \ref{prop:main}, Theorem \ref{thm:defocusing GWP}
will follow as shown below.
\begin{proof}[Proof of Theorem \ref{thm:defocusing GWP} assuming Proposition \ref{prop:main}]
Let $u\in C^{0}H^{1}\cap Y^{1}$ be a Duhamel solution to (\ref{eq:NLS T^d})
with initial data $u(0)=u_{0}$ and the positive lifespan  $[0,T_{\max})$.
If $T_{\max}=\infty$, there is nothing to prove. Assume $T_{\max}<\infty$.
Let $E_{0}:=E(u_{0})$. By $T_{\max}<\infty$ and Proposition \ref{prop:atomic space props},
we have $\norm u_{Y^{1}}=\infty$. By (\ref{eq:Nu bound}), we have
\[
\norm u_{Y^{1}}=\norm{e^{it\De}u_{0}+K^{+}\mc N(u)}_{Y^{1}}\les\norm{e^{it\De}u_{0}}_{H^{1}}+\norm{\mc N(u)}_{(Z^{-1})'}\les_{E_{0}}1+\norm u_{Z^{1}}^{1+a},
\]
which implies $\norm u_{Z^{1}}=\infty$. Applying Proposition \ref{prop:main}
to the sequence $u_{n}(t):=u(t+T_{\max}-1/n)$, we obtain a contradiction,
finishing the proof.
\end{proof}
\begin{proof}[Proof of Proposition \ref{prop:main}]
Assume $\left\{ u_{n}\right\} $ is such a sequence. Since $\norm{u_{n}}_{Z^{1}([0,T_{n,\max}))}=\infty$,
there exist $0=T_{n,0}<T_{n,1}<T_{n,2}<\cdots<T_{n,\max}$ such that
\begin{equation}
\norm{\chi_{I_{n,j}}\left(u_{n}-e^{i(t-T_{n,j-1})\De}u(T_{n,j-1})\right)}_{Z^{1}}=1,\label{eq:Z1>1}
\end{equation}
where we denoted $I_{n,j}=[T_{n,j-1},T_{n,j})$. For each $j\in\N$,
by (\ref{eq:Nu bound}), we have
\begin{equation}
\norm{\chi_{I_{n,j}}u_{n}}_{Y^{1}}\les\norm{u_{n}(T_{n,j-1})}_{H^{1}}+\norm{\chi_{I_{n,j}}\mc N(u_{n})}_{(Z^{-1})'}\les_{E_{0}}1+\norm{\chi_{I_{n,j}}u_{n}}_{Z^{1}}^{1+a}\les_{j}1.\label{eq:Y1<1}
\end{equation}
For $j,n\in\N$, let $u_{n,j},u_{n,\le j}\in C^{0}H^{1}\cap Y^{1}$
be the cutoff solutions to (\ref{eq:NLS T^d}) extending $u\mid_{[T_{n,j-1},T_{n,j})}$
and $u\mid_{[0,T_{n,j})}$, respectively. By (\ref{eq:Y1<1}), (\ref{eq:Z1>1}),
and the condition $T_{n,\max}\rightarrow0$, applying Proposition
\ref{prop:amp} to $\left\{ u_{n,j}(\cdot-T_{n,j-1})\right\} _{n,j\in\N}$
yields the existence of a frame $\left\{ \mc O_{n,j}\right\} =\left\{ (N_{n,j},t_{n,j},x_{n,j})\right\} $
such that $t_{n,j}\in I_{n,j}$ and both
\[
\left\{ \chi_{\tilde{I_{n,j}}}\right\} _{n,j\in\N}
\]
and
\begin{equation}
\left\{ \iota_{\mc O_{n,j}}\left(u_{n,j}-e^{i(t-T_{n,j-1})\De}u_{n}(T_{n,j-1})\right)(0)\right\} _{n,j\in\N}=\left\{ \iota_{\mc O_{n,j}}\left(u_{n,\le j}-u_{n,\le j-1}\right)(0)\right\} _{n,j\in\N}\label{eq:long}
\end{equation}
are weakly nonzero, where $\tilde{I_{n,j}}=\left\{ N_{n,j}^{2}(t-t_{n,j})\mid t\in I_{n,j}\right\} $
denotes the time interval mapped from $I_{n,j}$ by $\mc O_{n,j}$.
Thus, passing to a subsequence of $n$, either 
\[
\left\{ \iota_{\mc O_{n,j}}u_{n,\le j}(0)\right\} _{n,j\in\N}\text{ or }\left\{ \iota_{\mc O_{n,j}}u_{n,\le j-1}(0)\right\} _{n,j\in\N}
\]
is weakly nonzero. The proof is identical for either case; we assume
the former for simplicity.

Passing to a subsequence of $n$, by Lemma \ref{lem:weak lim is sol},
we have the weak convergence for each $j\in\N$: there exists a cutoff
solution $(v_{\le j},I_{\le j})$, $v_{\le j}\in C^{0}\dot{H}^{1}\cap L_{t,x}^{\frac{2d+4}{d-2}}(\R\times\R^{d})$
to (\ref{eq:NLS R^d}) such that for each $T\in\R$,
\begin{equation}
\left(\iota_{\mc O_{n,j}}u_{n,\le j}\right)(T)\weak v_{\le j}(T).\label{eq:->v(T)}
\end{equation}
By Proposition \ref{prop:defocusing R^d} on $v_{\le j}\mid_{I_{\le j}}$
and the Strichartz estimate on $v_{\le j}\mid_{\R\setminus I_{\le j}}$,
we have an a priori bound
\begin{equation}
\sup_{j\in\N}\norm{v_{\le j}}_{C^{0}\dot{H}^{1}\cap L_{t,x}^{\frac{2d+4}{d-2}}}\les_{E_{0}}1.\label{eq:vj bound}
\end{equation}
By (\ref{eq:vj bound}) and Lemma \ref{lem:weak lim is sol}, there
exists a cutoff solution $(v_{*},I_{*})$, $v_{*}\in C^{0}\dot{H}^{1}\cap L_{t,x}^{\frac{2d+4}{d-2}}(\R\times\R^{d})$
to (\ref{eq:NLS R^d}) that scatters and a subsequence $\left\{ j_{k}\right\} $
such that for $T\in\R$, 
\[
v_{\le j_{k}}(T)\weak v_{*}(T)
\]
holds. Since $\left\{ \iota_{\mc O_{n,j}}u_{n,\le j}(0)\right\} _{n,j\in\N}$
is weakly nonzero, so is $\left\{ v_{\le j}(0)\right\} _{j\in\N}$,
thus $v_{*}(0)\neq0$.

We evaluate the following limit in two ways:
\begin{equation}
\lim_{k\rightarrow\infty}\lim_{n\rightarrow\infty}\lim_{T\rightarrow\infty}e^{iT\De}\left(\iota_{\mc O_{n,j_{k}}}u_{n,\le j_{k}}\right)(-T).\label{eq:limlimlim}
\end{equation}
Since $\left\{ u_{n,\le j}\right\} _{n\in\N}$ is bounded in $Y^{1}$
for each $j\in\N$ and $\left\{ v_{\le j}\right\} $ is bounded in
$C^{0}\dot{H}^{1}\cap L_{t,x}^{\frac{2d+4}{d-2}}$, by Lemma \ref{lem:scatter T^d}
and Lemma \ref{lem:scatter R^d}, (\ref{eq:limlimlim}) equals
\begin{equation}
\lim_{k\rightarrow\infty}\lim_{T\rightarrow\infty}e^{iT\De}v_{\le j_{k}}(-T)=\lim_{T\rightarrow\infty}e^{iT\De}v_{*}(-T),\label{eq:limlimlim 1}
\end{equation}
which exists and is nonzero since $v_{*}\neq0$ scatters.

Since $u_{n,\le j}=e^{it\De}u_{n}(0)$ holds for $t<0$ and $\iota_{\mc O_{n,j_{k}}}\left(e^{it\De}u_{n}(0)\right)$
is a free evolution, (\ref{eq:limlimlim}) can also be rewritten as
\begin{equation}
\lim_{k\rightarrow\infty}\lim_{n\rightarrow\infty}\lim_{T\rightarrow\infty}e^{iT\De}\iota_{\mc O_{n,j_{k}}}\left(e^{it\De}u_{n,\le j_{k}}(0)\right)(-T)=\lim_{k\rightarrow\infty}\lim_{n\rightarrow\infty}\iota_{\mc O_{n,j_{k}}}\left(e^{it\De}u_{n}(0)\right)(0).\label{eq:limlimlim 2}
\end{equation}
Thus, we have
\begin{equation}
\lim_{k\rightarrow\infty}\lim_{n\rightarrow\infty}\iota_{\mc O_{n,j_{k}}}\left(e^{it\De}u_{n}(0)\right)(0)\neq0.\label{eq:!=00003D0}
\end{equation}
By (\ref{eq:!=00003D0}) and the weak nonzeroness of $\left\{ \chi_{\tilde{I_{n,j_{k}}}}\right\} _{n,k\in\N}\subset\left\{ \chi_{\tilde{I_{n,j}}}\right\} _{n,j\in\N}$,
the family of time-cutoffs of linear evolutions 
\[
\left\{ \iota_{\mc O_{n,j_{k}}}\left(\chi_{I_{n,j_{k}}}e^{it\De}u_{n}(0)\right)\right\} _{n,k\in\N}=\left\{ \chi_{\tilde{I_{n,j_{k}}}}\iota_{\mc O_{n,j_{k}}}\left(e^{it\De}u_{n}(0)\right)\right\} _{n,k\in\N}
\]
is also weakly nonzero. Thus, we have the critical norm bound
\begin{equation}
\liminf_{k\rightarrow\infty}\liminf_{n\rightarrow\infty}\norm{\chi_{I_{n,j_{k}}}e^{it\De}u_{n}(0)}_{L_{t,x}^{\frac{2d+4}{d-2}}}>0,\label{eq:lim>0}
\end{equation}
By (\ref{eq:lim>0}) and the Fatou's lemma, we have
\begin{align*}
 & \liminf_{n\rightarrow\infty}\norm{e^{it\De}u_{n}(0)}_{L_{t,x}^{\frac{2d+4}{d-2}}([0,1)\times\T^{d})}\\
 & =\liminf_{n\rightarrow\infty}\norm{\norm{\chi_{I_{n,j}}e^{it\De}u_{n}(0)}_{L_{t,x}^{\frac{2d+4}{d-2}}([0,1)\times\T^{d})}}_{\ell_{j}^{\frac{2d+4}{d-2}}}\\
 & \ge\norm{\liminf_{n\rightarrow\infty}\norm{\chi_{I_{n,j_{k}}}e^{it\De}u_{n}(0)}_{L_{t,x}^{\frac{2d+4}{d-2}}([0,1)\times\T^{d})}}_{\ell_{k}^{\frac{2d+4}{d-2}}}=\infty,
\end{align*}
which contradicts (\ref{eq:Bourgain Strichartz}). Thus, the assumption
of this proposition cannot hold, and we finish the proof.
\end{proof}

\appendix

\section{Reduction to almost periodic solutions\label{subsec:Adaptation-to-the}}

So far we have focused only on showing the global well-posedness of
NLS on $\T^{d}$ presuming that on $\R^{d}$. However, the argument
in Proposition \ref{prop:main} can also be used to reduce the GWP
problem on $\R^{d}$, Proposition \ref{prop:defocusing R^d}, to the
nonexistence of $\dot{H}^{1}(\R^{d})$ solution that is almost periodic
modulo scaling. This is conventionally reduced to showing the following
Palais-Smale criterion:
\begin{claim}[{Palais-Smale criterion, \cite[Proposition 3.1]{killip2010focusing}}]
\label{claim:Palais Smale}Let $d\ge3$. Assume Proposition \ref{prop:defocusing R^d}
fails. Let $E_{\inf}$ be the infimum of $E_{0}$ for which Proposition
\ref{prop:defocusing R^d} fails. Let $\left\{ u_{n}\right\} $ be
a sequence of solutions $u_{n}\in C^{0}\dot{H}^{1}\cap L_{t,x}^{\frac{2d+4}{d-2}}((T_{n}^{-},T_{n}^{+})\times\R^{d})$,
$T_{n}^{-}<0<T_{n}^{+}$ to (\ref{eq:NLS R^d}) such that
\[
\limsup_{n\rightarrow\infty}E(u_{n})=E_{\inf}
\]
and
\begin{equation}
\lim_{n\rightarrow\infty}\norm{u_{n}}_{L_{t,x}^{\frac{2d+4}{d-2}}([0,T_{n}^{+})\times\R^{d})}=\lim_{n\rightarrow\infty}\norm{u_{n}}_{L_{t,x}^{\frac{2d+4}{d-2}}((T_{n}^{-},0]\times\R^{d})}=\infty.\label{eq:unbounded}
\end{equation}
Then, there exist sequences $\left\{ N_{n}\right\} \subset2^{\Z}$
and $\left\{ x_{n}\right\} \subset\R^{d}$ such that $\left\{ N_{n}^{1-\frac{d}{2}}u_{n}(0,N_{n}^{-1}\cdot+x_{n})\right\} $
converges in $\dot{H}^{1}(\R^{d})$ along a subsequence.
\end{claim}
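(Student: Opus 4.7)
The plan is to adapt the strategy of Proposition \ref{prop:main} to the Euclidean setting. The essential tools carry over: Lemma \ref{lem:weak lim is sol} works verbatim on $\R^d$; Lemma \ref{lem:scatter R^d} supplies the uniform weak scattering; and on $\R^d$ the conventional inverse Strichartz inequality of Keraani produces frames directly from concentration of linear evolutions in the scattering norm, thereby replacing both Proposition \ref{prop:inverse embedding} and Proposition \ref{prop:amp} (the nonlinear-to-linear upgrade is unnecessary on $\R^d$, since the inverse Strichartz already operates at the scattering-norm level).

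First I would partition $[0, T_n^+)$ and $(T_n^-, 0]$ into subintervals $I_{n,j}$, $1 \le j \le J_n^\pm$, with $J_n^\pm \to \infty$, on each of which $\norm{u_n}_{L_{t,x}^{(2d+4)/(d-2)}(I_{n,j}\times\R^d)}$ equals a small fixed $\epsilon_0$. By Strichartz theory each such piece is bounded in $C^0\dot H^1 \cap L_{t,x}^{(2d+4)/(d-2)}$ by a constant depending only on $E_{\inf}$, and the linear evolution $e^{i(t-T_{n,j-1})\Delta} u_n(T_{n,j-1})$ has scattering norm $\gtrsim \epsilon_0$ on $I_{n,j}$ (otherwise small-data theory would keep $u_n$ subcritical there). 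Inverse Strichartz then extracts a frame $\mc O_{n,j} = (N_{n,j}, t_{n,j}, x_{n,j}) \in 2^\Z \times I_{n,j} \times \R^d$ such that $\iota_{\mc O_{n,j}} u_n$ is weakly nonzero at rescaled time $0$; Lemma \ref{lem:weak lim is sol} then produces nonlinear limit profiles $v_{\le j} \in C^0 \dot H^1 \cap L_{t,x}^{(2d+4)/(d-2)}(\R\times\R^d)$ solving (\ref{eq:NLS R^d}), each with energy at most $E_{\inf}$ by Brezis-Lieb decoupling.

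Next I would mirror the two-perspective analysis of Proposition \ref{prop:main}: evaluate $\lim_{k\to\infty}\lim_{n\to\infty}\lim_{T\to\infty} e^{iT\Delta}(\iota_{\mc O_{n,j_k}} u_n)(-T)$ in two different orders using Lemma \ref{lem:scatter R^d}. Through the profile side this equals $\lim_{T\to\infty} e^{iT\Delta} v_*(-T)$ for a double-weak-limit profile $v_*$ of $\{v_{\le j}\}$; reversing the order reorganizes it as a weak limit of rescaled initial data $\iota_{\mc O_{n,j_k}} u_n(0)$. Since by minimality of $E_{\inf}$ each profile with energy strictly below $E_{\inf}$ is globally regular and scatters with an a priori bound, the comparison combined with Brezis-Lieb additivity under asymptotically orthogonal frames forces all $\mc O_{n,j}$ to coincide asymptotically with a single frame $(N_n, t_n, x_n)$ and the limit $v_*$ to attain the full energy $E_{\inf}$; otherwise distinct orthogonal frames would contribute additive energy exceeding $E_{\inf}$, contradicting $\limsup E(u_n) = E_{\inf}$.

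With a single concentration site, Brezis-Lieb at time $t_n$ upgrades the weak convergence $\iota_{\mc O_n} u_n(0) \weak v_*(0)$ to strong $\dot H^1(\R^d)$ convergence, yielding the claim after a time translation normalizing $t_n = 0$. The main obstacle I anticipate is the two-perspective triple-limit analysis that excludes multiple concentration sites — exactly the delicate step of Proposition \ref{prop:main} — where the minimality of $E_{\inf}$ is indispensable both for forcing uniqueness of the concentration and for saturating the critical energy.
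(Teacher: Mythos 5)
Your proposal departs substantially from the paper's actual argument for Claim \ref{claim:Palais Smale}, and the departure introduces genuine gaps.

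The paper's proof is deliberately economical: it does \emph{not} construct a new framework, nor does it invoke any orthogonality between frames, profile decomposition, or stability theory. It simply runs the identical construction of $\{\mc O_{n,j}\}$ and $v_{\le j}$ from Proposition \ref{prop:main} and makes one observation: the only place in that proof where GWP on $\R^d$ (Proposition \ref{prop:defocusing R^d}) was invoked was to obtain the a priori bound (\ref{eq:vj bound}). Hence the argument of Proposition \ref{prop:main} still terminates in a contradiction against (\ref{eq:unbounded}) unless $\limsup_j E(v_{\le j})\ge E_{\inf}$. The dichotomy is the whole point: either the bound (\ref{eq:vj bound}) holds and you are done by the triple-limit argument, or a profile with energy arbitrarily close to $E_{\inf}$ has been extracted. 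You replace this with a conventional inverse-Strichartz / orthogonality / Brezis--Lieb route, which is exactly the machinery the appendix is designed to avoid, and in doing so you lose the simplicity and run into real problems.

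The concrete gaps are as follows. First, your claim that ``distinct orthogonal frames would contribute additive energy exceeding $E_{\inf}$, contradicting $\limsup E(u_n)=E_{\inf}$'' is incorrect as stated: by Brezis--Lieb the energies of orthogonal profiles sum to \emph{at most} $E_{\inf}$, which is no contradiction. The correct classical route (if one insists on it) is: each profile with energy strictly below $E_{\inf}$ scatters by minimality, and if there were at least two nontrivial profiles then \emph{each} has energy strictly below $E_{\inf}$, so all scatter; one then needs a \emph{stability (perturbation) theorem} to resum the profiles and conclude that $u_n$ has bounded scattering norm, contradicting (\ref{eq:unbounded}). You never invoke stability theory, yet without it the argument does not close; and it is precisely the reliance on stability theory that the paper's appendix is written to circumvent, since for $d\ge 7$ the relevant stability statement in the literature rests on the nontrivial H\"older-continuity result of \cite{tao2005stability}. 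Second, you skip the case analysis on the time parameters $t_{n,j}$: the paper must argue that either $t_{n,j}\to\infty$ (then either $v_{\le j}$ scatters backward, contradicting (\ref{eq:unbounded}) via small-data theory, or $v_{\le j}$ fails to scatter backward, contradicting the $Y^1$ bound), or $t_{n,j}\to T_*<\infty$, and only in the latter case does equicontinuity transport the weak convergence to $u_n(0)$ itself rather than to $u_n(t_{n,j}+T)$. Without this step your concluding weak limit lands at the wrong time and the claimed convergence of $N_n^{1-\frac d2}u_n(0,N_n^{-1}\cdot+x_n)$ does not follow. Finally, the upgrade from weak to strong $\dot H^1$ convergence in the paper is the sharp quantitative estimate (\ref{eq:Fatou remark}), $\limsup_n\norm{u_{n,\le j}(t_{n,j}+T)-v_{\le j}(T)}_{\dot H^1}^2\les E_{\inf}-E(v_{\le j})\les\epsilon$, followed by $\epsilon\to 0$; your appeal to Brezis--Lieb is plausible in spirit but cannot be applied until you have established that the limit attains the full energy $E_{\inf}$, which is the very thing the missing dichotomy was supposed to deliver.
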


The original proof of Claim \ref{claim:Palais Smale} in \cite[Proposition 3.1]{killip2010focusing}
used a linear profile decomposition and stability theory.\footnote{In \cite[Proposition 3.1]{killip2010focusing}, the kinetic energy
(i.e., $\norm{\na u_{n}}_{C^{0}L^{2}}^{2}$) replaced the role of
the energy in Claim \ref{claim:Palais Smale}. Either choice leads
to the reduction of GWP to the almost periodic case by the same convergence
argument.} In particular, a nontrivial H\"older continuity of the flow map \cite{tao2005stability}
was employed.

We provide an alternative proof of Claim \ref{claim:Palais Smale},
which mainly follows the argument of Proposition \ref{prop:main}.
For the proof, we note that all the analysis we did on $\T^{d}$ work
the same on $\R^{d}$. On $\R^{d}$, there is no short-time restriction
(since we no longer have resonance), frequency sizes are $N_{n}\in2^{\Z}$,
and $\iota_{\mc O_{n,j}}$ is simply a scaling operator. Although
we show only the defocusing case, note that the same works for the
focusing case due to the lower semicontinuity of the energy functional
$E$, based on a Fatou property below the threshold energy.
\begin{proof}
Let $\left\{ \mc O_{n,j}\right\} =\left\{ (N_{n,j},t_{n,j},x_{n,j})\right\} $
(defined for $n\gg_{j}1$) and $v_{\le j}$ be as in the proof of
Proposition \ref{prop:main}. Since (\ref{eq:vj bound}) is the only
place where Proposition \ref{prop:defocusing R^d} is used, we have
$\limsup_{j}E(v_{\le j})\ge E_{\inf}$. Thus, for each $\epsilon>0$,
there exists $j\in\N$ such that $E(v_{\le j})\ge E_{\inf}-\epsilon$.
Assume $\epsilon\ll1$. Up to scaling, we may assume $N_{n,j}=1$
and $x_{n,j}=0$. (\ref{eq:->v(T)}) can be rewritten as $u_{n,\le j}(t_{n,j}+T)\weak v_{\le j}(T)$
for $T\in\R$. Thus, we have
\begin{equation}
\limsup_{n\rightarrow\infty}\norm{u_{n,\le j}(t_{n,j}+T)-v_{\le j}(T)}_{\dot{H}^{1}}^{2}\les E_{\inf}-E(v_{\le j})\les\epsilon.\label{eq:Fatou remark}
\end{equation}
If $t_{n,j}\rightarrow\infty$ and $v_{\le j}$ scatters backward
in time, there exists $T>-\infty$ such that $\norm{e^{it\De}v_{\le j}(T)}_{L_{t,x}^{\frac{2d+4}{d-2}}((-\infty,0]\times\R^{d})}\ll1$,
then by (\ref{eq:Fatou remark}) and standard small-data theory, $u_{n}$
scatters backward in time with $\limsup_{n\rightarrow\infty}\norm{u_{n}}_{L_{t,x}^{\frac{2d+4}{d-2}}((-\infty,t_{n,j}+T]\times\R^{d})}\ll1$,
contradicting (\ref{eq:unbounded}).

If $t_{n,j}\rightarrow\infty$ and $v_{\le j}$ does not scatter backward
in time, we have
\[
\limsup_{n\rightarrow\infty}\norm{u_{n,\le j}}_{L_{t,x}^{\frac{2d+4}{d-2}}(\R\times\R^{d})}\ge\norm{v_{\le j}}_{L_{t,x}^{\frac{2d+4}{d-2}}((-\infty,0]\times\R^{d})}=\infty,
\]
contradicting the norm bound (\ref{eq:Y1<1}) and the embedding $Y^{1}\hook L_{t,x}^{\frac{2d+4}{d-2}}$.

Thus, passing to a subsequence, we may assume $t_{n,j}\rightarrow T_{*}<\infty$.
By (\ref{eq:->v(T)}), we have $u_{n,\le j}(t_{n,j}-T_{*})\weak v_{\le j}(-T_{*})$.
Since $\d_{t}u_{n,\le j}$ is a time-cutoff of $\d_{t}u_{n}$, which
is bounded in $L^{\infty}H_{\loc}^{-1}$, $\left\{ u_{n,\le j}\right\} _{n\in\N}$
is equicontinuous in $H_{\loc}^{-1}(\R^{d})$ and thus $u_{n}(0)=u_{n,\le j}(0)\weak v_{\le j}(-T_{*})$
also holds true.

Now removing our simplification $N_{n,j}=1$ and $x_{n,j}=0$ above,
what we obtained is that $\left\{ N_{n,j}^{1-\frac{d}{2}}u_{n}(0,N_{n,j}^{-1}\cdot+x_{n,j})\right\} _{n\in\N}$,
which is a sequence of $\dot{H}^{1}(\R^{d})$-critical rescales of
$\left\{ u_{n}(0)\right\} _{n\in\N}$, converges weakly to $v_{\le j}(-T_{*})$,
which has the energy $E(v_{\le j}(-T_{*}))\ge E_{\inf}-\epsilon$.
Taking $\epsilon\rightarrow0$ finishes the proof.
\end{proof}
\bibliographystyle{plain}
\bibliography{citationforTd}
\end{document}